\numberwithin{equation}{section} 
\newtheorem{theorem}{Theorem}[section]
\newtheorem{corollary}[theorem]{Corollary}
\newtheorem{lemma}[theorem]{Lemma}
\newtheorem{proposition}[theorem]{Proposition}
 \theoremstyle{definition} 
 \newtheorem{definition}[theorem]{Definition}
 \newtheorem{remark}[theorem]{Remark}
\newcommand{\R}{\mathbb{R}}	% Real numbers
\newcommand{\N}{\mathbb{N}} % Natural numbers
\newcommand{\dx}{\,\mathrm{d}x}	% dx
\newcommand{\ds}{\,\mathrm{d}S}	% ds
\renewcommand{\d}{\mathrm{d}}
\newcommand{\e}{\varepsilon}	% ds
\newcommand{\nnu}{\bm{\nu}}  % nu exterior normal
\newcommand{\norm}[1]{\left\lVert #1 \right\lVert}
\newcommand{\abs}[1]{\left| #1 \right|}
\newcommand{\sub}{\subseteq}
\newcommand{\weak}{\rightharpoonup}
\newcommand\restr[1]{\raisebox{-.5ex}{$|$}_{#1}}
\newenvironment{bvp}{\left\{\begin{aligned}  }{\end{aligned}\right.}
\begin{document}

\title[Sharp behavior of Dirichlet--Laplacian eigenvalues]{Sharp behavior of Dirichlet--Laplacian eigenvalues for a class of singularly perturbed problems}

\author[L. Abatangelo]{Laura Abatangelo}
\address{Laura Abatangelo
 \newline \indent Politecnico di Milano
 \newline \indent Dipartimento di Matematica
 \newline\indent Piazza Leonardo da Vinci 32, 20133 Milano, Italy}
\email{laura.abatangelo@polimi.it}

\author[R. Ognibene]{Roberto Ognibene}
\address{Roberto Ognibene
	\newline \indent Universit\`a di Pisa
	\newline \indent Dipartimento di Matematica
	\newline\indent  Largo Bruno Pontecorvo, 5, 56127 Pisa, Italy}
\email{roberto.ognibene@dm.unipi.it}

\date{\today}

\maketitle

\noindent
{{\bf Abstract.} We deepen the study of Dirichlet eigenvalues in bounded domains where a thin tube is attached to the boundary. As its section shrinks to a point, the problem is spectrally stable and we quantitatively investigate the rate of convergence of the perturbed eigenvalues. We detect the proper quantity which sharply measures the perturbation's magnitude. It is a sort of torsional rigidity of the tube's section relative to the domain. This allows us to sharply describe the asymptotic behavior of the perturbed spectrum, even when eigenvalues converge to a multiple one. The final asymptotics of eigenbranches depend on the local behavior near the junction of eigenfunctions chosen in a proper way.

The present techniques also apply when the perturbation of the Dirichlet eigenvalue problem consists in prescribing homogeneous Neumann boundary conditions on a small portion of the boundary of the domain.
}

\noindent {\bf Keywords.} Dirichlet-Laplacian; multiple eigenvalues; singular perturbations of domains; varying mixed boundary conditions; torsional rigidity

\medskip 

\noindent{\bf MSC classification.} 35J25; %(Boundary value problems for
% second-order elliptic equations)
35P15; % (Estimates of  eigenvalues in context of PDEs)
35B25. % (Singular perturbations in context of PDEs)

\section{Introduction and main results}

The object of the present paper is the eigenvalue variation of Dirichlet eigenvalues for a class of singularly perturbed domains. 
Our main attention is devoted to the sharp effect of attaching a thin tube to a fixed bounded domain when its cross-section shrinks to zero. This class of problems covers also the case of the widely studied \textit{dumbbell domains}. In addition, the presented proofs fit also the case of moving mixed Dirichlet--Neumann boundary conditions, as the Neumann part tends to disappear. In both cases, the starting \emph{unperturbed} problem is the classical eigenvalue problem for the Laplacian with Dirichlet boundary conditions. 

More precisely, we fix $d\geq 2$ and a bounded, open and connected set $\Omega\sub\R^d$ and we consider the problem of finding $\lambda\in\R$ and a nonzero function $\varphi\colon \Omega\to \R$ such that
\begin{equation}\label{eq:str_dir_eigen}
	\begin{bvp}
		-\Delta \varphi &=\lambda\varphi, &&\text{in }\Omega, \\
		\varphi &=0, &&\text{on }\partial\Omega.
	\end{bvp}
\end{equation}
This is considered in a weak sense: $\varphi\in H^1_0(\Omega)\setminus\{0\}$ is such that
\begin{equation*}
	\int_\Omega\nabla \varphi\cdot\nabla v\dx=\lambda\int_{\Omega} \varphi v\dx,\quad\text{for all }v\in H^1_0(\Omega).
\end{equation*}
If this happens, we say that $\lambda$ is an \emph{eigenvalue} and $\varphi$ is one of the corresponding \emph{eigenfunctions}. By classical spectral theory it is known that this problem admits a diverging sequence of positive eigenvalues, which we hereafter denote 
\[
	0<\lambda_1<\lambda_2\leq \cdots\leq \lambda_n\leq \cdots\to +\infty.
\]
We also denote by $\{\varphi_n\}_{n\geq 1}$ a corresponding sequence of eigenfunctions, assumed to be orthonormal in $L^2(\Omega)$.

We now introduce a singular perturbation of \eqref{eq:str_dir_eigen}, which basically consists in attaching a thin tube to the domain $\Omega$. Let us assume that $0\in\partial \Omega$ and $\partial \Omega $ is flat in a neighbourhood of the origin, namely
\begin{equation}\label{eq:ass_flat}
		  B_{r_0}':=\{x\in B_{r_0}\colon x_d=0\}\sub \partial \Omega~\text{and}~ B_{r_0^+}:=B_{r_0}\cap \{x_d>0\}\sub\Omega~\text{for some }r_0>0.
\end{equation}
Given a relatively open set $\Sigma\sub B_{r_0}'$ with Lipschitz boundary and $\e\in(0,1)$, we consider the thin tube of section $\e\Sigma$ and fixed height equal to $1$ attached at the origin. If we denote 
\[
 T_\e:= \e \Sigma \times (-1,0],
\]
our perturbed domain will be 
\begin{equation}\label{eq:Omega_eps}
\Omega_\e:=\Omega\cup \e\Sigma\cup T_\e 
\end{equation}
and $\overline{\Omega}\cap \overline{T_\e}=\overline{\e\Sigma}$. We then consider the eigenvalue problem for the Dirichlet-Laplacian on the perturbed domain $\Omega_\e$
\begin{equation}\label{eq:perturbed_problem}
 \begin{bvp}
		-\Delta \varphi &= \lambda^\e \varphi, &&\text{in }\Omega_\e, \\
		\varphi &=0, &&\text{on }\partial\Omega_\e. \\
	\end{bvp}
\end{equation}
Again by classical spectral theory, for $\e\in(0,1)$ this problem admits a sequence of eigenvalues tending to $+\infty$, which will be denoted as
\[
	0<\lambda_1^\e<\lambda_2^\e\leq \cdots\leq \lambda_n^\e\leq \cdots\to +\infty,
\]
whereas $\{\varphi_n^\e\}_{n\geq 1}$ will denote a corresponding sequence eigenfunctions, assumed to be orthonormal in $L^2(\Omega_\e)$.

The main goal of the present paper is understanding the behavior of the perturbed eigenvalues $\lambda_n^\e$ as the tube radius $\e$ tends to zero. Literature on this problem is very rich. We refer to \cite[Introduction]{FO} for a presentation of the established results. Here we just mention that such a large interest for the problem is due to physical and engineering motivations. Spectral behavior of the Laplacian on thin branching domains appears in the theory of quantum graphs, which models propagation of waves in quasi one-dimensional systems (quantum wires and waveguides, photonic crystals, blood vessels and so on), as well as in the theory of elasticity and multistructure problems.

First of all, as a consequence of the convergence in the sense of Mosco of the sequence of domains $\{\Omega_\e\}_\e$ to the limit domain $\Omega$ (see the discussion in Subsection \ref{subsec:mosco}), classical results (see e.g. \cite{daners2003}) ensure stability of the spectrum, in the sense that for any $N\in\N\setminus\{0\}$
\[
 \lambda_N^\e \to \lambda_N \quad \text{as }\e\to 0.
\]
The analysis of the present paper originates from the main result in \cite{FO}, which in turn generalizes the papers \cite{AFT2014} and \cite{Gadylshin2005tubi}. In \cite{FO} the authors study the sharp asymptotic behavior of Dirichlet eigenvalues in a domain perturbed as described above (with some additional geometric assumption on the section of the tube) using Almgren-type monotonicity formulas, Courant-Fischer min-max characterization for eigenvalues and blow-up analysis for scaled eigenfunctions. Specifically, they restrict to the case in which the perturbed eigenvalues are converging to a simple eigenvalue of the limit problem.
Due to the local nature of the singular perturbation taken into account, the eigenfunctions' local behavior at $0\in\partial \Omega$ (namely the point where the thin branch is attached) plays a crucial role. This behavior
can be described as follows (see \cite{Bers1955} or \cite[Theorem 1.3]{FFT2011}): if $\varphi_N$ is an eigenfunction of \eqref{eq:str_dir_eigen}, there exists $k\in\N\setminus \{0\}$ such that    
\begin{equation}\label{eq:conv_psi_gamma}
	\frac{\varphi_N(r x)}{r^k}\to \psi_k(x), 
	\quad \text{in }C^{1,\alpha}(\overline{B_1^+}) ~ \text{as }r\to0,~\text{for some }\psi_k\in \mathbb{P}^k_{\textup{odd}},
\end{equation}
where $\mathbb{P}^k_{\textup{odd}}$ denotes the space of harmonic homogeneous polynomials of degree $k$, odd with respect to the last variable $x_d$. We point out that the polynomials in the class $\mathbb{P}^k_{\textup{odd}}$, restricted to the $(d-1)$-dimensional unit sphere, are spherical harmonics (i.e. eigenfunctions of the spherical Laplacian) vanishing on $\{x_d=0\}$.
If we denote 
\begin{equation}\label{eq:dom_Pi}
	\Pi:=\R^d_+\cup\Sigma\cup T,\quad\text{where }T:=\Sigma \times(-\infty,0)
\end{equation}
and by $\mathcal D^{1,2}(\Pi)$ the completion of $C^\infty_c(\Pi)$ with respect to the $L^2$ norm of the gradient, the main result \cite[Theorem 1.1]{FO} establishes that, if $\Sigma$ is starshaped with respect to the origin and if $\lambda_N$ is a simple eigenvalue of \eqref{eq:str_dir_eigen}, then 
\begin{equation}\label{eq:risultatoFO}
\lambda_N^\epsilon=\lambda_N-C_{k,\Sigma}\,\e^{d-2+2k}+o(\e^{d-2+2k}),\quad\text{as }\e\to 0,
\end{equation}
where 
\begin{equation}\label{eq:risultatoFOsegue}
C_{k,\Sigma} =-2\, \inf_{u\in \mathcal D^{1,2}(\Pi)} \left\{ \frac12 \int_\Pi |\nabla u|^2\dx -\int_{\Sigma} u\dfrac{\partial \psi_k}{\partial x_d}\dx' \right\}>0.
\end{equation}

Let us now briefly comment on this result. From \eqref{eq:risultatoFO}, one can see that the local nature of the perturbation mainly emerges in the exponent $k$ of the radius of the tube's section $\e$: in this sense, the vanishing order $k$ of the unique limit eigenfunction (up to multiplicative constants) at the junction point determines the rate of convergence of the perturbed eigenvalue. The second factor which influences the asymptotic expansion \eqref{eq:risultatoFO} is the positive constant $C_{k,\Sigma}$, whose variational characterization \eqref{eq:risultatoFOsegue} sheds some light on its nature. As already noticed for the first time in \cite{AFT2014}, this coefficient has to do with the ability of a membrane to respond to a vertical force acting on it.
The main drawback of \cite{FO} is the geometric assumptions on the tube's section $\Sigma$ and the hypothesis of simplicity of the limit eigenvalue $\lambda_N$.

In this paper, we mainly address these two open questions. Concerning nontrivial multiplicity of eigenvalues, less is available in literature as compared to the simple case (we refer to \cite{FO} for the state of the art in this last instance). In this regard, we mention the work by Taylor \cite{Taylor2013} which provides an estimate for the eigenvalue variation in a similar context. In this work, the distance between the perturbed and the limit eigenvalue is estimated by $C\e^a$, where the rate $a$ is independent of any eigenvalue and the constant $C$ depends only on the distance between the limit eigenvalue to the nearby ones. The same author provides a similar result in collaboration with Collins (see \cite{CollinsTaylor2018}) for problems with mixed Dirichlet--Neumann boundary conditions if the tube is attached at a point where Dirichlet condition is imposed.  
Apart from these, no sharper result on eigenbranches is available in literature, for which eigenbranches could be distinguished each other by their different asymptotic behavior as $\e\to0$. Splitting of eigenbranches is proved in \cite[Chapter 3]{Gadylshin2005tubi} but only in dimension $2$. To the best of our knowledge, no other result is available on this issue. Nevertheless, we strongly believe that this is a relevant topic of investigation: multiple eigenvalues appear in domains with symmetries and this is often the case in applications.

Before stating our main results, we give the fundamental definition of the paper together with some remarks on it.  
For $f\in C^1\left(\overline{B_{r_0\e}^+}\right)$
we 
introduce the functional
\begin{equation}\label{eq:functional_torsion}
	 J_{\e\Sigma,f}^{\Omega_\e}(u)=J_f^\e(u):=\frac{1}{2}\int_{\Omega_\e}\abs{\nabla u}^2\dx - \int_{\e\Sigma}u\dfrac{\partial f}{\partial x_d}\dx',
\end{equation}
defined for $u\in H^1_0(\Omega_\e)$\footnote{we note that $J_f^\e$ can be defined for less regular functions $f$: it may be sufficient that $\frac{\partial f}{\partial x_d} \in \left(H^{1/2}_{00}\right)'$ provided the integral $\int_{\e\Sigma}u\frac{\partial f}{\partial x_d}\dx'$ is meant as a duality product. }. 

\begin{definition}\label{def:torsion_functional}
For any $f\in C^1\left(\overline{B_{r_0\e}^+}\right)$
	we call the \emph{thin $f$-torsional rigidity of $\e\Sigma$ relative to $\Omega_\e$} the following quantity
	\begin{align*}
\mathcal{T}_{\Omega_\e}(\e\Sigma,f):&=-2\inf_{u\in H^1_0(\Omega_\e)} J_f^\e(u), \\
 &=\sup_{u\in H^1_0(\Omega_\e)}\left\{2\int_{\e\Sigma}u\frac{\partial f}{\partial x_d}\dx'-\int_{\Omega_\e}\abs{\nabla u}^2\dx  \right\}.
	\end{align*}
	If $\frac{\partial f}{\partial x_d}=1$ on $\e\Sigma$, we denote $\mathcal{T}_{\Omega_\e}(\e\Sigma):=\mathcal{T}_{\Omega_\e}(\e\Sigma,f)$ and we call it  the \emph{thin torsional rigidity of $\e\Sigma$ relative to $\Omega_\e$}. 
\end{definition}
If we consider $\e\Sigma$ as a variable and $\Omega_\e$ and $f$ as parameters, 
$\mathcal T_{\Omega_\e}(\e\Sigma,f)$ is a set-function and will play a crucial role in our analysis. It will be the right quantity to face the perturbation theory of eigenvalues in this framework as soon as $f$ is a suitable relative eigenfunction. 
Realizing this is
one of the main novelty of this work. 
Broadly speaking, this notion of torsional rigidity of the perturbing set plays a similar role as that of the capacity of a set in \cite{ALM2022} (see \Cref{subsec:mixed} for a more detailed explanation). 
In \Cref{sec:facts} we report some basic results concerning this quantity, such as existence and uniqueness of a minimizer for $J_f^\e$, equivalent formulations, monotonicity properties and asymptotic behavior as $\e\to 0$. Here we just mention that there exists a unique function $U^{\Omega_\e}_{\e\Sigma,f}=U_f^\e\in H^1_0(\Omega_\e)$ (depending also on $\Sigma$) achieving $\mathcal{T}_{\Omega_\e}(\e\Sigma,f)$. Moreover, it weakly satisfies
	\begin{equation*}
	\begin{bvp}
		-\Delta U_f^\e &=0, &&\text{in }\Omega_\e\setminus \e\Sigma \\
		U_f^\e &=0, &&\text{on }\partial\Omega_\e, \\
	\frac{\partial U_f^\e\restr{\Omega}}{\partial x_d}-\frac{\partial U_f^\e\restr{T_\e}}{\partial x_d}&=-\frac{\partial f}{\partial x_d}, &&\text{on }\e\Sigma,
	\end{bvp}
\end{equation*}
in the sense that
\begin{equation}\label{eq:tors_function_weak}
	\int_{\Omega_\e}\nabla U_f^\e\cdot\nabla \varphi\dx=\int_{\e\Sigma}\varphi \frac{\partial f}{\partial x_d}\dx',\quad\text{for all }\varphi\in H^1_0(\Omega_\e).
\end{equation}
In addition, one can observe that the map
	\[
			f\mapsto U^\e_f
	\]
	is linear. When $\frac{\partial f}{\partial x_d}=1$ on $\e\Sigma$, we drop the index $f$ and we denote by $U^\e$ the unique function in $H^1_0(\Omega_\e)$ achieving $\mathcal{T}_{\Omega_\e}(\e\Sigma)$. We call $U_f^\e$ and $U^\e$ the \emph{thin $f$-torsion function} and \emph{thin torsion function} of $\e\Sigma$ (relative to $\Omega_\e$), respectively.

\begin{remark}
Classically, the definition of the \emph{torsional rigidity} is the $L^1$ norm of the solution $u_\Omega$ to the problem 
\[
 \begin{cases}
  -\Delta u=1 &\text{in } \Omega\\
  u=0 &\text{on } \partial\Omega,
 \end{cases}
\]
that is $\mathcal T(\Omega)=\int_\Omega u_{\Omega}\,dx$. We stress that $u_\Omega$ solves 
\[
\int_\Omega \nabla u_\Omega \cdot \nabla v \dx= \int_\Omega v \dx \quad \text{for all }v\in H^1_0(\Omega).
\]
Its name is due to the theory of elasticity, where the quantity plays a role in the model of a twisted beam subjected to a torque. At the same time, it plays a role in the model of the displacement of a membrane subjected to a pressure, as well. In this latter case, if the pressure is concentrated on a $(N-1)$-dimensional regular oriented manifold $\Sigma\subset\Omega$ and the membrane is hanged to a fixed support (homogeneous Dirichlet boundary conditions), one expects to end up (at least formally) with the model 
\[
\int_\Omega \nabla u_{\Omega,\Sigma} \cdot \nabla v \dx= \int_\Sigma v \dx \quad \text{for all }v\in H^1_0(\Omega).
\]
This gives rise to the aforementioned \emph{thin torsional rigidity} (see also the discussion in the subsequent Subsection \ref{subsec:mixed}), where the $L^1(\Omega)$ norm is replaced by the $L^1(\Gamma)$ norm. Among many others, we refer to the comprehensive book \cite{henrot2018} for a detailed exposition on the classical notion of torsional rigidity.
\end{remark}

Coming back to our problem, let $\lambda_N$ be an eigenvalue of \eqref{eq:str_dir_eigen} with multiplicity $m\geq 1$ and let us denote by $E(\lambda_N)\sub H^1_0(\Omega)$ the associated $m$-dimensional eigenspace. As already mentioned in the introduction, for $i\in\{1,\dots,m\}$,
\begin{equation*}
	\lambda_{N+i-1}^\e\to \lambda_N, \quad \mbox{as } \e\to0.
\end{equation*} 
Therefore we have exactly $m$ eigenvalue branches departing from the multiple limit eigenvalue $\lambda_N$ and some of them may a priori coincide. We investigate the asymptotic behaviors of $\lambda_{N+i-1}^\e$ as $\e\to0$.
In order to find good approximations for perturbed eigenvalues
we use a slight modification of a lemma by G. Courtois \cite{Courtois1995}, itself based on the work by Y. Colin de Verdi\'{e}re \cite{ColindeV1986}.  
It establishes essentially the possibility to approximate small eigenvalues of a quadratic form with eigenvalues of the same form restricted to a finite dimensional subspace of the form domain. One pays an error which can be estimated in terms of spectral projections.
For completeness, we report its statement in \Cref{p:appEV}. 
Good approximations for perturbed eigenvalues rely on good approximations for perturbed eigenfunctions. These will be suitable modifications of the limit eigenfunctions corresponding to the limit eigenvalue $\lambda_N=\cdots=\lambda_{N+m-1}$. If $\varphi$ is an eigenfunction relative to $\lambda_N$, its best approximation will be  
\[
	\Pi_\e\varphi=\Phi^\e_{\varphi}:=\varphi+U^\e_\varphi, \quad\text{with }\varphi\in E(\lambda_N),
\]
where $U^\e_\varphi$ is the thin $\varphi$-torsion function.
Indeed, in view of \eqref{eq:tors_function_weak}, the function $\Phi^\e_\varphi\in H^1_0(\Omega_\e)$ weakly solves
\[
	\begin{bvp}
		-\Delta \Phi^\e_\varphi&=\lambda_N \varphi,&&\text{in }\Omega_\e, \\
		\Phi^\e_\varphi&=0,&&\text{on }\partial\Omega_\e.
	\end{bvp}
\]
This essentially means that $\Phi_\varphi^\e$ is in the domain of the perturbed operator, but $-\Delta\Phi_\varphi^\e$ acts as $-\Delta \varphi$ in the sense of distributions on $H^1_0(\Omega)$. %For further discussion see Remark \ref{rem:best_approx}.

If we apply Proposition \ref{p:appEV} in a suitable way (see Section \ref{sec:appLemma} for the details) we obtain our first main result. We state it in the following  

\begin{theorem}\label{thm:approxEVs} For $i\in\{1,\dots,m\}$,
\begin{equation}\label{eq:asymptEV}
	\lambda_{N+i-1}^\e=\lambda_N-\mu_i^\e+o(\chi_\e^2) \mbox{ as }\e\to0,
\end{equation}
where
\begin{equation*}
	\chi_\e^2:=\sup\{\mathcal T_{\Omega_\e}(\e\Sigma,u)\,:\,u\in E(\lambda_N)\mbox{ and } \|u\|_{L^2(\Omega)}=1\}
\end{equation*}
and $\{\mu_i^\e\}_{i=1}^m$ are the eigenvalues (taken in non-increasing order) of the quadratic form $r_\e$, defined for $u,v\in E(\lambda_N)$ as
\begin{equation*}
	r_\e(u,v) := \int_{\Omega_\e}\nabla U_u^\e\cdot \nabla U_v^\e\dx+\lambda_N\int_{\Omega_\e} U_u^\e\,U_v^\e\dx,
\end{equation*}
where $U_u^\e$ ($U_v^\e$) is the thin $u$-torsion function of $\e\Sigma$ (the thin $v$-torsion function of $\e\Sigma$, respectively).
\end{theorem}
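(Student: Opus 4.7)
The plan is to apply \Cref{p:appEV} to the Dirichlet quadratic form on $H^1_0(\Omega_\e)$ with the trial subspace
\[
F_\e:=\Pi_\e E(\lambda_N)=\Span\{\Phi^\e_{\varphi_1},\dots,\Phi^\e_{\varphi_m}\}\subset H^1_0(\Omega_\e),
\]
where $\{\varphi_j\}_{j=1}^m$ is an $L^2(\Omega)$-orthonormal basis of $E(\lambda_N)$. As a preliminary check, I would verify via an $\e$-uniform Poincar\'e inequality on $\Omega_\e\supset\Omega$ (combined with $\|\nabla U^\e_\varphi\|^2_{L^2(\Omega_\e)}=\mathcal{T}_{\Omega_\e}(\e\Sigma,\varphi)\to 0$) that $\|U^\e_\varphi\|_{L^2(\Omega_\e)}\to 0$, so that $\Pi_\e$ is injective on $E(\lambda_N)$ for small $\e$ and $\dim F_\e=m$.

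The heart of the argument is the explicit computation of the stiffness and mass matrices $A_\e,B_\e$ of $F_\e$ in the basis $\{\Phi^\e_{\varphi_j}\}$. Testing the weak relation $-\Delta\Phi^\e_{\varphi_i}=\lambda_N\varphi_i$ (with $\varphi_i$ extended by zero on $T_\e$, legitimate since $\varphi_i|_{\e\Sigma}=0$) against $\Phi^\e_{\varphi_j}$ gives
\[
(A_\e)_{ij}=\lambda_N\delta_{ij}+\lambda_N\int_\Omega\varphi_i U^\e_{\varphi_j}\dx.
\]
A Green's identity on $\Omega$ combined with the weak formulation \eqref{eq:tors_function_weak} rewrites the cross-term as $\int_{\Omega_\e}\nabla U^\e_{\varphi_i}\cdot\nabla U^\e_{\varphi_j}\dx$, which is manifestly symmetric in $i,j$. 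Setting
\[
N_\e=\Bigl(\int_{\Omega_\e}\nabla U^\e_{\varphi_i}\cdot\nabla U^\e_{\varphi_j}\dx\Bigr)_{ij},\quad M_\e=\Bigl(\int_{\Omega_\e}U^\e_{\varphi_i}U^\e_{\varphi_j}\dx\Bigr)_{ij},\quad R_\e=N_\e+\lambda_N M_\e,
\]
a parallel expansion of the mass matrix yields $A_\e=\lambda_N I+N_\e$ and $B_\e=I+\tfrac{2}{\lambda_N}N_\e+M_\e$; note that $R_\e$ is exactly the matrix of $r_\e$ in the basis $\{\varphi_j\}$.

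Next, I would analyse the generalized eigenvalue problem $A_\e\xi=\tilde\lambda\, B_\e\xi$. The substitution $\tilde\lambda=\lambda_N-\tilde\mu$ recasts it as $R_\e\xi=\tilde\mu B_\e\xi$. Since $\|N_\e\|,\|M_\e\|=O(\chi_\e^2)$ (by the very definition of $\chi_\e^2$ and Cauchy--Schwarz), one has $B_\e=I+O(\chi_\e^2)$, and its eigenvalues $\tilde\mu_i$ coincide with the $\mu_i^\e$ of the statement up to a correction of order $\chi_\e^2\|R_\e\|=O(\chi_\e^4)=o(\chi_\e^2)$. The restricted Rayleigh quotients on $F_\e$ are therefore $\tilde\lambda_i^\e=\lambda_N-\mu_i^\e+o(\chi_\e^2)$.

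The concluding step, and the one I expect to be the main technical obstacle, is the invocation of \Cref{p:appEV} to transfer this expansion to the true perturbed eigenvalues $\lambda_{N+i-1}^\e$. The residual of $\Phi^\e_\varphi$ with respect to $\lambda_N$ is $-\Delta\Phi^\e_\varphi-\lambda_N\Phi^\e_\varphi=-\lambda_N U^\e_\varphi$, whose $L^2(\Omega_\e)$-norm is only $O(\chi_\e)$; a naive bound in the Courtois lemma would thus produce an error of order $\chi_\e^2$, of the same size as the leading correction and therefore useless. To reach the sharp $o(\chi_\e^2)$ remainder one must exploit that the projection of $U^\e_\varphi$ on $F_\e$ itself is already $O(\chi_\e^2)$ (as a consequence of the same Green-type identities used above), so that only the $F_\e^\perp$ component enters at leading order in the spectral-projection estimate of \Cref{p:appEV}; a further reduction is then provided by the uniform spectral gap between $\lambda_N$ and its neighbouring unperturbed eigenvalues, which is preserved under the Mosco convergence $\Omega_\e\to\Omega$. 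Combining this with the restricted-problem asymptotics yields \eqref{eq:asymptEV}.
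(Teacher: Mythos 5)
Your computation of the restricted matrices is correct and is an equivalent reformulation of the paper's Lemma~\ref{l:restrict} (in particular, the identity $\lambda_N\int_\Omega\varphi_i U^\e_{\varphi_j}\dx=\int_{\Omega_\e}\nabla U^\e_{\varphi_i}\cdot\nabla U^\e_{\varphi_j}\dx$ does hold, because testing \eqref{eq:weak_eq_tubes} with $\varphi_i$, which vanishes on $\e\Sigma$, gives $\int_{\Omega_\e}\nabla\varphi_i\cdot\nabla U^\e_{\varphi_j}\dx=0$). The algebra showing that the restricted Rayleigh quotients on $F_\e$ are $\lambda_N-\mu_i^\e+o(\chi_\e^2)$ is sound.

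The genuine gap is in your last step. You correctly observe that the crude estimate $\|U^\e_\varphi\|_{L^2(\Omega_\e)}=O(\chi_\e)$ (which is all a Poincar\'e inequality on $\Omega_\e$ gives, since $\|\nabla U^\e_\varphi\|^2_{L^2(\Omega_\e)}=\mathcal T_{\Omega_\e}(\e\Sigma,\varphi)\le\chi_\e^2$) would only produce an error of order $\chi_\e^2$ in (H3) and hence $O(\chi_\e^2)$ in conclusion (i), which is useless. But your proposed fix---exploiting that the projection of $U^\e_\varphi$ onto $F_\e$ is $O(\chi_\e^2)$ and that only the $F_\e^\perp$ component matters---does not match the form of hypothesis (H3) of Proposition~\ref{p:appEV}, which requires a bound against \emph{all} $g\in\mathcal D$, not just $g\perp F_\e$; you would need a modified abstract lemma and you do not supply one. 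The correct (and simpler) ingredient is that $\|U^\e_\varphi\|_{L^2(\Omega_\e)}$ is in fact $o(\chi_\e)$, i.e.\ $\|U^\e_f\|^2_{L^2(\Omega_\e)}=o\bigl(\mathcal T_{\Omega_\e}(\e\Sigma,f)\bigr)$ as $\e\to0$. This is precisely Lemma~\ref{lemma:L^2_norm} of the paper, proved by a compactness/contradiction argument: normalize $W_n:=U^{\e_n}_f/\|U^{\e_n}_f\|_{L^2}$, use the bound on $\|\nabla W_n\|_{L^2}$ together with Mosco convergence $H^1_0(\Omega_\e)\to H^1_0(\Omega)$ to extract a limit $W\in H^1_0(\Omega)$ with $\|W\|_{L^2(\Omega)}=1$ and $\int_\Omega\nabla W\cdot\nabla\varphi\dx=0$ for all $\varphi\in H^1_0(\Omega)$, forcing $W\equiv 0$, a contradiction. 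Once this lemma is in place, Lemma~\ref{l:norm} gives $M_\e=o(\chi_\e)$, hence $\delta_\e=o(\chi_\e)$ in (H3), and Proposition~\ref{p:appEV}(i) directly yields the $o(\chi_\e^2)$ remainder with no further work. This compactness lemma is the decisive ingredient your proposal is missing.
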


\begin{remark}\label{r:simple}
 In fact, one can obviously even consider $m=1$, for this is allowed in Proposition \ref{p:appEV}. In this case, Theorem \ref{thm:approxEVs} readily implies the main result in \cite{FO} (see \eqref{eq:risultatoFO} and \eqref{eq:risultatoFOsegue}), if supplied with the blow-up analysis for $\mathcal T_{\Omega_\e}(\e\Sigma,f)$ given in Theorem \ref{thm:blow_up1}. 
\end{remark}

Although Theorem \ref{thm:approxEVs} provides a good approximation for perturbed eigenvalues when the limit one is simple, it is not exhaustive for multiple ones. For instance, for some $i$ it may be $\mu_i^\e=o(\chi_\e^2)$: in this case \eqref{eq:asymptEV} reduces to $\lambda_{N+i-1}^\e-\lambda_N=o(\chi_\e^2)$, providing no sharp asymptotic behavior but just an estimate. Nevertheless, we expect that eigenbranches' asymptotic rates will depend on local behaviors at $0$ of properly chosen limit eigenfunctions. In order to improve Theorem \ref{thm:approxEVs} in this way, we introduce the following proposition. It sheds light on the proper choice of the limit eigenbasis: the limit eigenspace can be uniquely split in several subspaces where eigenfunctions share the same vanishing order at $0$. We call it the \emph{order decomposition} of $E(\lambda_N)$.  

\begin{proposition}\label{prop:DecompESintro} There exists an integer $p\geq 1$, a decomposition of $E(\lambda_N)$ into a sum of orthogonal subspaces
\[E(\lambda_N)=E_1\oplus\dots\oplus E_p\] 
and an associated finite increasing sequence of integers
\[0<k_1<\dots<k_p\]
such that, for all $1\le j \le p$, a function $\varphi \in E_j\setminus\{0\}$ has vanishing order $k_j$ at $0$, that is
\[
	\frac{\varphi(rx)}{r^{k_j}}\to \psi_{k_j}(x)\quad\text{in }C^{1,\alpha}(\overline{B_1^+}),~\text{as }\e\to 0,
	\]
	for some harmonic polynomial $\psi_{k_j}$, homogeneous of degree $k_j$ and odd with respect to $x_d$.
In addition, such a decomposition is unique.  
\end{proposition}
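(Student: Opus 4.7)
The plan is to stratify $E(\lambda_N)$ according to the vanishing order at $0$. By \eqref{eq:conv_psi_gamma}, every nonzero $\varphi \in E(\lambda_N)$ has a well-defined vanishing order $k(\varphi) \in \N \setminus \{0\}$. For each integer $k \geq 1$, I would set
\[
F_k := \{0\} \cup \bigl\{\varphi \in E(\lambda_N) \setminus \{0\} \colon k(\varphi) \geq k\bigr\}.
\]
The first step is to verify that $F_k$ is a linear subspace. Closure under scalar multiplication is immediate from the linearity of the limit in \eqref{eq:conv_psi_gamma}. For closure under addition, given $\varphi_1, \varphi_2 \in F_k$ with orders $n_1 \leq n_2$, I would distinguish two cases. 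If $n_1 < n_2$, then $r^{-n_1}\varphi_2 \to 0$ while $r^{-n_1}\varphi_1 \to \psi_{n_1}^{(1)} \neq 0$ in $C^{1,\alpha}(\overline{B_1^+})$, so $\varphi_1 + \varphi_2$ has order exactly $n_1 \geq k$. If $n_1 = n_2 = n$, then $r^{-n}(\varphi_1 + \varphi_2) \to \psi_n^{(1)} + \psi_n^{(2)}$; either this limit is nonzero (order $= n$), or it vanishes, in which case the Bers-type expansion applied to the eigenfunction $\varphi_1 + \varphi_2$ yields a strictly higher order $\geq n+1 \geq k$.

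Since $\dim E(\lambda_N) < +\infty$, the decreasing chain $F_1 \supseteq F_2 \supseteq \dots$ stabilizes at $\{0\}$ and strictly decreases at finitely many integers $k_1 < k_2 < \dots < k_p$, with $F_{k_1} = E(\lambda_N)$. I would then define $E_j$ as the $L^2(\Omega)$-orthogonal complement of $F_{k_{j+1}}$ inside $F_{k_j}$ (with the convention $F_{k_{p+1}} := \{0\}$). Iterating $F_{k_j} = E_j \oplus F_{k_{j+1}}$ produces the orthogonal decomposition $E(\lambda_N) = E_1 \oplus \dots \oplus E_p$. For any nonzero $\varphi \in E_j$, containment in $F_{k_j}$ forces $k(\varphi) \geq k_j$; if $k(\varphi) \geq k_j + 1$, then by the choice of the jump indices $F_{k_j+1} = F_{k_{j+1}}$, so $\varphi \in F_{k_{j+1}}$, and combined with $\varphi \perp F_{k_{j+1}}$ this yields $\varphi = 0$. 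Hence every nonzero element of $E_j$ has vanishing order exactly $k_j$.

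For uniqueness, given any admissible decomposition $E(\lambda_N) = E_1' \oplus \dots \oplus E_{p'}'$ with orders $k_1' < \dots < k_{p'}'$, the non-cancellation principle from the first paragraph shows that the vanishing order of any $\varphi = \sum_j \varphi_j'$, with $\varphi_j' \in E_j'$, equals $k_{j_0}'$ where $j_0 := \min\{j \colon \varphi_j' \neq 0\}$. This identifies $\{k_1', \dots, k_{p'}'\}$ with the set of vanishing orders realized in $E(\lambda_N)$, hence with $\{k_1, \dots, k_p\}$, so $p' = p$ and $k_j' = k_j$. The same argument forces $F_{k_j} = E_j' \oplus \dots \oplus E_p'$, and then the orthogonality requirement gives $E_j' = F_{k_j} \cap F_{k_{j+1}}^{\perp} = E_j$. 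The main obstacle is really the subspace property of $F_k$: it hinges on the uniqueness of the leading polynomial $\psi_k$ in \eqref{eq:conv_psi_gamma} and on the fact that whenever the leading polynomials of two eigenfunctions cancel, the Bers expansion applied to their sum produces the next nonzero term — a manifestation of boundary unique continuation for harmonic functions.
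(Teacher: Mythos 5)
Your proof is correct and rests on the same core idea as the paper's: stratify $E(\lambda_N)$ by vanishing order at $0$, let the jumps of the resulting decreasing filtration give the integers $k_j$, and take successive orthogonal complements to obtain the $E_j$. The only real difference is technical: the paper obtains the linearity of the filtration spaces for free by first reflecting oddly across $\{x_d=0\}$ to get analyticity in a full ball $B_R$, defining the linear Taylor-truncation maps $\Pi_k \colon E(\lambda_N)\to\R_k^{\mathrm{odd}}[X_1,\dots,X_d]$ so that each filtration space is a kernel $\ker\Pi_{k-1}$, and then referring to \cite[Appendix A]{ALM2022}; this is a one-line substitute for your case analysis on cancellation of leading Bers polynomials, and your writeup essentially reconstructs the argument from the cited appendix.
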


Secondly, we will need
a blow-up analysis for torsion functions. Let us describe in a few words our procedure. By the change of variables $x\mapsto \e x$, we zoom in closely to the origin: the perturbed domain $\Omega_\e$ is transformed into 
\[
	\frac{1}{\e}\Omega_\e=\frac{1}{\e}\Omega\cup \Sigma \cup \frac{1}{\e} T_\e,
\]
which is intuitively \enquote{converging}, to $\Pi$ (defined in \eqref{eq:dom_Pi}), as $\e\to 0$. Given the order decomposition as in \Cref{prop:DecompESintro} and fixed $j\in\{1,\dots,p\}$, we consider the map
\begin{equation*}
		\mathfrak{B}_j \colon  E_j\to \mathbb{P}^{k_j}_{\textup{odd}},
\end{equation*}
where $\mathfrak{B}_j\varphi$ is the harmonic homogeneous polynomial of degree $k_j$ which describes the local behavior of $\varphi\in E_j$ near the origin, i.e.
\[
	\frac{\varphi(rx)}{r^{k_j}}\to \left(\mathfrak{B}_j\varphi\right)(x)\quad\text{in }C^{1,\alpha}(B_1^+),~\text{as }r\to 0.
\]
We observe that for $\varphi\in E_j$ the aforementioned change of variables gives
\[
\begin{aligned}
		\mathcal{T}_{\Omega_\e}(\e\Sigma,\varphi)&=-2\left[\frac{1}{2}\int_{\Omega_\e}\abs{\nabla U_\varphi^\e}^2\dx-\int_{\e\Sigma}U_\varphi^\e\frac{\partial \varphi}{\partial x_d}\dx' \right] \\
		&=-2\e^{d-2+2k_j}\left[\frac{1}{2}\int_{\frac{1}{\e}\Omega_\e}|\nabla\hat{U}_\varphi^\e|^2\dx-\int_{\Sigma}\hat{U}_\varphi^\e\frac{\partial \hat{\varphi}^\e}{\partial x_d}\dx' \right],
\end{aligned}
\]
where
\[
	\hat{U}_\varphi^\e(x):=\frac{U_\varphi^\e(\e x)}{\e^{k_j}}\quad\text{and}\quad\hat{\varphi}^\e(x):= \frac{\varphi(\e x)}{\e^{k_j}}.
\]
In view of this, it is reasonable to investigate the behavior of $\hat{U}_\varphi^\e$ as $\e\to 0$ and to expect that $\e^{-d+2-2k_j}\mathcal{T}_{\Omega_\e}(\e\Sigma,\varphi)$ admits a nontrivial, finite, limit. This motivates the following quantity:
\begin{equation}\label{eq:def_blowup_torsion}
	\mathcal{T}_\Pi(\Sigma,\Psi):=-2\inf\left\{ \frac{1}{2}\int_\Pi\abs{\nabla u}^2\dx-\int_\Sigma u\frac{\partial \Psi}{\partial x_d}\dx'\colon u\in \mathcal{D}^{1,2}(\Pi) \right\},
\end{equation}
which we define for $\Psi\in C^1(\overline{B_{r_0}^+})$\footnote{We point out that a little abuse of notation has been made, since this minimization is made within $\mathcal{D}^{1,2}(\Pi)$, which is strictly larger than $H^1_0(\Pi)$.}. If $\frac{\partial \Psi}{\partial x_d}=1$ on $\Sigma$, we denote $\mathcal{T}_\Pi(\Sigma):=\mathcal{T}_\Pi(\Sigma,\Psi)$. Therefore, our \Cref{thm:blow_up1} will establish
\[
	\e^{-d+2-k_j}\mathcal{T}_{\Omega_\e}(\e\Sigma,\varphi)\to \mathcal{T}_\Pi (\Sigma,\mathfrak{B}_j\varphi)\quad\text{as }\e\to 0,~\text{for all }\varphi\in E_j,
\]
for all $j=1,\dots,p$.  Hereafter we denote by $U^\Pi_{\Sigma,\varphi}$ the function achieving $\mathcal{T}_\Pi(\Sigma,\mathfrak{B}_j\varphi)$, for $\varphi\in E_j$.

Taking into account Proposition \ref{prop:DecompESintro} and this blow-up analysis, we are able to prove the following result about the sharp asymptotic behavior of perturbed eigenvalues. Before stating it, we need the following notation. For $1\le j\le p$, we let $E_j$ as in \Cref{prop:DecompESintro} and we write 
	\[m_{j}:=\mathrm{dim}\,(E_{j}),\]
	so that 
	\[m=m_1+\dots+m_j+\dots+m_p,\]
	whereas we denote by
	\[\mu_{j,1}\ge\dots\ge\mu_{j,\ell}\ge\dots\ge\mu_{j,m_j}>0\]
	the eigenvalues of the bilinear form
	\begin{equation*}
		\mathfrak T_j(u,v)=\int_{\Pi} \nabla U_u\cdot \nabla U_v\dx \quad \text{defined for }u,v\in E_j\subseteq E(\lambda_N),
	\end{equation*}
	where $U_u=U^\Pi_{\Sigma,u}$ and $U_v=U^\Pi_{\Sigma,v}$ achieve $\mathcal{T}_\Pi(\Sigma,\mathfrak{B}_j u)$ and $\mathcal{T}_\Pi(\Sigma,\mathfrak{B}_j v)$, respectively. 
We are now ready to state the main result of our paper.

\begin{theorem} \label{thm:orderEVs}
	For any $i\in\{1,\dots,m\}$, there holds
	\begin{equation}\label{eq:asymptEVOrder}
		\lambda_{N+i-1}^\e=\lambda_N-\mu_{j,\ell}\,\e^{d-2+2k_{j}}+o(\e^{d-2+2k_{j}}),\quad\mbox{as }\e\to0,
	\end{equation}
	where 
	\[
	 (j,\ell)=\begin{cases}
	       (1,i) \quad &\text{if } 1\leq i \leq m_1\\
	       (2,i-m_1) \quad &\text{if } m_1+1\leq i \leq m_1+m_2\\
	       \vdots & \vdots \\
	       (p,i-(m-m_p)) \quad &\text{if } m-m_p+1\leq i \leq m
	      \end{cases}
	\]
\end{theorem}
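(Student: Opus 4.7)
The plan is to combine Theorem~\ref{thm:approxEVs} with the order decomposition from Proposition~\ref{prop:DecompESintro} and the blow-up analysis for torsion functions in order to determine the precise asymptotic behavior of each $\mu_i^\e$ at the appropriate scale $\e^{d-2+2k_j}$.

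First I would fix an $L^2(\Omega)$-orthonormal basis $\{u_{j,\ell}\}_{1\le j\le p,\,1\le\ell\le m_j}$ of $E(\lambda_N)$ adapted to the decomposition $E_1\oplus\cdots\oplus E_p$, with $(u_{j,1},\dots,u_{j,m_j})$ chosen to diagonalize the bilinear form $\mathfrak{T}_j$ on $E_j$. In this basis, the matrix $M_\e$ of $r_\e$ has block structure $M_\e=(M_\e^{(j,j')})_{j,j'}$. Using the weak formulation \eqref{eq:tors_function_weak} one obtains $\int_{\Omega_\e}\nabla U_u^\e\cdot\nabla U_v^\e\,dx=\int_{\e\Sigma}U_v^\e\,\partial_d u\,dx'$; the change of variables $x=\e y$ together with Theorem~\ref{thm:blow_up1} then yields
\[
M_\e^{(j,j)}=\e^{d-2+2k_j}\operatorname{diag}(\mu_{j,1},\dots,\mu_{j,m_j})+o\!\left(\e^{d-2+2k_j}\right),\qquad M_\e^{(j,j')}=O\!\left(\e^{d-2+k_j+k_{j'}}\right)\text{ for }j\ne j',
\]
while the $L^2$ contribution $\lambda_N\int U_u^\e U_v^\e$ is a factor $\e^2$ smaller than the Dirichlet part and is therefore absorbed in the remainder.

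Second, I would analyze the spectrum of this multi-scale block matrix. The eigenvalues of $M_\e$ split into $p$ groups, the $j$-th one of size $m_j$ at order $\e^{d-2+2k_j}$. A Courant--Fischer min-max argument, carried out inductively along the filtration $E_p\subset E_{p-1}\oplus E_p\subset\cdots\subset E(\lambda_N)$ and deflating at each stage the contribution of the larger-scale block already identified, should show that the $\ell$-th eigenvalue of the $j$-th group is asymptotically $\mu_{j,\ell}\,\e^{d-2+2k_j}+o(\e^{d-2+2k_j})$. Reading Theorem~\ref{thm:approxEVs} at each scale, and noting that $o(\chi_\e^2)=o(\e^{d-2+2k_1})$ is sufficient only for the first block, I would refine Proposition~\ref{p:appEV} for the lower blocks by applying it to a tailored approximating subspace, for instance obtained from the functions $\Phi^\e_{u_{j,\ell}}$ via a Gram--Schmidt-type correction against $\Phi^\e_{u_{j',\ell'}}$ for $j'<j$, so that the resulting remainder is $o(\e^{d-2+2k_j})$ as required by \eqref{eq:asymptEVOrder}.

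The main obstacle is the matrix-analytic step above: the cross blocks $M_\e^{(j,j')}$ have size $\e^{d-2+k_j+k_{j'}}$, exactly the geometric mean of the two diagonal scales, so a naive Schur-complement reduction would produce a correction at the same order as the smaller diagonal block and would spoil the claimed leading coefficient $\mu_{j,\ell}$. The argument must therefore show that this would-be Schur correction is in fact of lower order; I expect this to follow either from a multi-scale min-max scheme that extracts each block of eigenvalues through a suitable quotient construction, or from a cancellation traceable to the $L^2(S^{d-1}_+)$-orthogonality of the spherical harmonics $\mathfrak{B}_j(E_j)$ and $\mathfrak{B}_{j'}(E_{j'})$ of different degrees, transferred via the weak formulation to the $\Pi$-torsion functions $U^\Pi_{\Sigma,\mathfrak{B}_j u}$.
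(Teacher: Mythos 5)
Your proposal and the paper follow essentially the same route: fix an $L^2$-orthonormal eigenbasis adapted to the order decomposition $E_1\oplus\dots\oplus E_p$, diagonalize each $\mathfrak{T}_j$ on $E_j$, use the blow-up limit (Theorem~\ref{thm:blow_up1}) for the diagonal blocks, and extract the groups of eigenvalues scale by scale. The difference is in how the ``lower-order'' blocks are isolated.

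The paper does not analyze the full $m\times m$ matrix $A_\e$ and its Schur complements. Instead, after the first application of Proposition~\ref{p:appEV} it \emph{rescales} $q_\e$ by $\e^{-(d-2+2k_1)}$, replaces the approximating subspace $F_\e=\Pi_\e(E(\lambda_N))$ by the smaller $F_2^\e=\Pi_\e(E_2\oplus\dots\oplus E_p)$, and reapplies the abstract lemma; iterating this $p$ times recovers each block directly as the restriction of $r_\e$ to $E_j$, with the cross-block coupling swept into the lemma's error term $4\delta^2/\gamma$. This is a cleaner framing than your filtration/deflation scheme, because there is no explicit Schur-complement computation and no Gram--Schmidt correction to design: the burden is entirely shifted onto the bound (H3) for the rescaled form on the shrunken subspace.

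That said, you are right to flag the cross-block coupling as the sensitive point, and I would push harder on it than the paper's text does. At step $2$, the paper asserts that ``repeating the arguments of Section~\ref{sec:appLemma}'' yields $|q_2^\e(v,w)|\le o(\e^{k_2-k_1})\|v\|\|w\|$ for $v\in F_2^\e$, which is exactly what is needed to make $4\delta_2^2/\gamma=o(\e^{2k_2-2k_1})$ and hence $o(\e^{d-2+2k_2})$ after undoing the rescaling. But the identity $q_\e(\Pi_\e v,w)=-\lambda_N\int_{\Omega_\e}U_v^\e\,w$ together with Lemma~\ref{lemma:L^2_norm} applied to $v\in E_2\oplus\dots\oplus E_p$ gives only $\|U^\e_v\|_{L^2}=o\bigl(\e^{(d-2+2k_2)/2}\bigr)\|v\|$, hence (after multiplying by $\e^{-(d-2+2k_1)}$) a bound of the form $o\bigl(\e^{(d-2+2k_2)/2-(d-2+2k_1)}\bigr)$, which is weaker than $o(\e^{k_2-k_1})$ by a factor $\e^{(d-2)/2+k_1}$. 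To obtain the claimed rate one needs a \emph{quantitative} refinement of Lemma~\ref{lemma:L^2_norm}, of the type $\|U^\e_v\|_{L^2(\Omega_\e)}=o(\e^{d-2+k_1+k_2})\|v\|_{L^2(\Omega)}$ for $v\in E_2\oplus\dots\oplus E_p$, and this is not stated in the paper and does not follow from a rescaled Poincar\'e inequality alone. Your candidate mechanism --- orthogonality of $\mathfrak{B}_j(E_j)$ and $\mathfrak{B}_{j'}(E_{j'})$ on the hemisphere --- does not transfer automatically: the cross entry $\int_\Pi\nabla U^\Pi_{\Sigma,\mathfrak{B}_j u}\cdot\nabla U^\Pi_{\Sigma,\mathfrak{B}_{j'}v}\,\dx =\int_\Sigma U^\Pi_{\Sigma,\mathfrak{B}_{j'}v}\,\partial_{x_d}\mathfrak{B}_j u\,\dx'$ vanishes for a rotationally symmetric $\Sigma$ (e.g. the disc) but has no reason to vanish for a generic section. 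So this is the genuinely delicate step of the theorem, and any complete write-up --- yours or the paper's --- needs either the refined $L^2$ decay of the thin torsion function or a more careful choice of $F_j^\e$ (for instance, orthogonalizing $\Phi^\e_{u_{j,\ell}}$ against the actual perturbed eigenfunctions for the earlier blocks, as you begin to suggest) before (H3) can be invoked with a $\delta$ small enough to close \eqref{eq:asymptEVOrder}.
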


This result concludes our analysis on this class of problems: it provides sharp asymptotics for any perturbed eigenvalue.

Let us now analyze more in depth the particular case when $p=1$ and $k_1=1$. This is relevant since the gradient of limit eigenfunctions vanish at most on a subset of $\partial\Omega\cap \{x_d=0\}$ which has zero $(d-1)$-dimensional measure (see e.g. \cite{HHH99}). Thus, the limit eigenfunctions vanish with order $1$ for $\mathcal{L}^{d-1}$-almost every point in $\partial\Omega\cap \{x_d=0\}$. 
Broadly speaking, vanishing order 1 occurs generically with respect to the points in $\partial\Omega\cap \{x_d=0\}$.
We also observe that if $d=2$ $p=1$ can only uccur if $m=1$ (see \Cref{rem:MaxDim}). If $u\in E(\lambda_N)$, we have that
\[
	\mathfrak{B}_1 u=\frac{\partial u}{\partial x_d}(0)\,x_d,
\]
that is
\[
	\frac{u(rx)}{r}\to \frac{\partial u}{\partial x_d}(0)\,x_d,\quad\text{in }C^{1,\alpha}(\overline{B_1^+}),~\text{as }\e\to 0.
\]
Hence, by linearity there holds
\[
	U^\Pi_{\Sigma,u}=\frac{\partial u}{\partial x_d}(0)\,U^\Pi_{\Sigma,x_d}.
\]
Therefore, we can write down the quadratic form 
\[
	\mathfrak{T}_1(u,v)=\frac{\partial u}{\partial x_d}(0)\frac{\partial v}{\partial x_d}(0)\int_\Pi\abs{\nabla U^\Pi_{\Sigma,x_d}}^2\dx=\frac{\partial u}{\partial x_d}(0)\frac{\partial v}{\partial x_d}(0)\,\mathcal{T}_\Pi(\Sigma).
\]
In particular, it is possible to choose an eigenbasis $\{\varphi_N,\dots,\varphi_{N+m-1}\}$ for $E(\lambda_N)$ which diagonalizes $\mathfrak{T}_1$, in such a way that
\[
	\mu_{1,i}=\left(\frac{\partial \varphi_{N+i-1}}{\partial x_d}(0)\right)^2\,\mathcal{T}_\Pi(\Sigma),\quad\text{for }i=1,\dots,m,
\]
thus leading to the following.
\begin{corollary}
	Let us assume \Cref{prop:DecompESintro} holds with $p=1$ and $k_1=1$. Then there exists a basis $\{\varphi_N,\dots,\varphi_{N+m-1}\}$ of $E(\lambda_N)$, orthonormal in $L^2(\Omega)$ and such that, for all $i\in\{1,\dots,m\}$, there holds
	\[
		\lambda_{N+i-1}^\e=\lambda_N-\left(\frac{\partial \varphi_{N+i-1}}{\partial x_d}(0)\right)^2\,\mathcal{T}_\Pi(\Sigma)\,\e^{d}+o(\e^d),\quad\text{as }\e\to 0.
	\]
\end{corollary}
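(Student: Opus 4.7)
The plan is to specialize \Cref{thm:orderEVs} to the hypotheses $p=1$ and $k_1=1$, following the computation already outlined in the paragraph preceding the statement. Under these assumptions the order decomposition of \Cref{prop:DecompESintro} is trivial, $E(\lambda_N)=E_1$, and every nonzero $u\in E(\lambda_N)$ vanishes with order $1$ at the origin. First I would observe that, since $u$ is smooth and vanishes on $B_{r_0}'$, the limit polynomial $\mathfrak{B}_1 u$ is an odd harmonic polynomial of degree $1$ vanishing on $\{x_d=0\}$, which forces
\[
\mathfrak{B}_1 u=\frac{\partial u}{\partial x_d}(0)\,x_d.
\]

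Next I would exploit the linearity of $f\mapsto U^\Pi_{\Sigma,f}$, which follows immediately from the fact that the Euler--Lagrange equation associated with \eqref{eq:def_blowup_torsion} is linear in the datum $\frac{\partial f}{\partial x_d}$ on $\Sigma$. This gives
\[
U^\Pi_{\Sigma,u}=\frac{\partial u}{\partial x_d}(0)\,U^\Pi_{\Sigma,x_d},
\]
and hence, for every $u,v\in E(\lambda_N)$,
\[
\mathfrak{T}_1(u,v)=\frac{\partial u}{\partial x_d}(0)\,\frac{\partial v}{\partial x_d}(0)\int_{\Pi}\abs{\nabla U^\Pi_{\Sigma,x_d}}^2\dx=\frac{\partial u}{\partial x_d}(0)\,\frac{\partial v}{\partial x_d}(0)\,\mathcal{T}_\Pi(\Sigma),
\]
where the last identity comes from testing the Euler--Lagrange equation for $U^\Pi_{\Sigma,x_d}$ against itself and noting that $\partial_{x_d}(x_d)=1$ on $\Sigma$.

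Finally, $\mathfrak{T}_1$ is a symmetric bilinear form on the finite-dimensional Hilbert space $(E(\lambda_N),\langle\cdot,\cdot\rangle_{L^2(\Omega)})$, so the spectral theorem furnishes an $L^2(\Omega)$-orthonormal basis $\{\varphi_N,\dots,\varphi_{N+m-1}\}$ of $E(\lambda_N)$ that diagonalizes it. By the identity above the associated eigenvalues of $\mathfrak{T}_1$ are precisely
\[
\mu_{1,i}=\left(\frac{\partial \varphi_{N+i-1}}{\partial x_d}(0)\right)^2\mathcal{T}_\Pi(\Sigma),\qquad i=1,\dots,m,
\]
and plugging these into \Cref{thm:orderEVs} with $j=1$, $k_1=1$ (so that the exponent $d-2+2k_1$ equals $d$) yields the claimed expansion. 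I do not anticipate a serious obstacle: the rank-at-most-one structure of $\mathfrak{T}_1$ makes the diagonalization step entirely elementary, and all the analytic work has already been carried out in \Cref{thm:orderEVs}.
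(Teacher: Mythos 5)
Your proposal is correct and follows exactly the same route the paper takes in the paragraph preceding the corollary: identify $\mathfrak{B}_1 u=\partial_{x_d}u(0)\,x_d$, use linearity of $f\mapsto U^\Pi_{\Sigma,f}$ to factor $\mathfrak{T}_1$, diagonalize the resulting rank-at-most-one form on $E(\lambda_N)$, and feed the eigenvalues into \Cref{thm:orderEVs}. One small remark worth adding (which neither you nor the paper states explicitly): since $\mathbb{P}^1_{\textup{odd}}$ is one-dimensional, the hypotheses $p=1$, $k_1=1$ force $m=\dim E_1\le 1$, so the rank-one observation is in fact the assertion that $\mathfrak{T}_1$ is a positive multiple of the identity on a one-dimensional space, consistent with the positivity $\mu_{j,\ell}>0$ required in \Cref{thm:orderEVs}.
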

We observe that this result recovers what the authors obtained in \cite{AFT2014} in the case $m=1$. Finally, we would like to emphasize another particular instance, which concerns the behavior of the first eigenvalue, being one of the most widely studied set functions. In this case, it is known that the limit eigenvalue is simple (i.e. $m=1$) and that the corresponding eigenfunctions have nonzero gradient on any regular boundary point, being them positive in $\Omega$ (i.e. $p=1$ and $k_1=1$). Hence, we have the following.
\begin{corollary}
	Let $\varphi_1\in H^1_0(\Omega)$ be a normalized eigenfunction corresponding to $\lambda_1$. Then there holds
	\[
		\lambda_1^\e=\lambda_1-\left(\frac{\partial \varphi_1}{\partial x_d}(0)\right)^2\,\mathcal{T}_\Pi(\Sigma)\,\e^d+o(\e^d),\quad\text{as }\e\to 0.
	\]
\end{corollary}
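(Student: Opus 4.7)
The plan is to derive this result as a direct specialization of the preceding corollary, applied with $N=1$. For this it suffices to verify two classical facts about the first Dirichlet eigenvalue: first, that $\lambda_1$ is simple, so that $m=1$; second, that every nontrivial element of $E(\lambda_1)$ vanishes at the origin exactly at order one, so that the order decomposition of \Cref{prop:DecompESintro} collapses to $p=1$, $k_1=1$ and $E_1=E(\lambda_1)$. Once these two ingredients are in place, the expansion \eqref{eq:asymptEVOrder}, read with $i=1$, reduces to the asserted formula.

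Simplicity of $\lambda_1$ I would invoke as a standard consequence of the variational characterization of the principal eigenvalue together with the strong maximum principle on the bounded connected domain $\Omega$: any minimizer of the Rayleigh quotient on $H^1_0(\Omega)$ may be replaced by its absolute value without changing the quotient, and the strong maximum principle applied to $-\Delta\varphi_1=\lambda_1\varphi_1\ge 0$ then forces $\varphi_1>0$ in $\Omega$; two linearly independent first eigenfunctions, each positive up to sign, cannot be $L^2(\Omega)$-orthogonal, hence $\dim E(\lambda_1)=1$.

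For the vanishing order I would exploit the flatness assumption \eqref{eq:ass_flat}. By elliptic regularity up to the flat boundary, $\varphi_1\in C^{1,\alpha}(\overline{B_{r_0/2}^+})$, since $\varphi_1$ solves $-\Delta\varphi_1=\lambda_1\varphi_1$ in $B_{r_0}^+$ with vanishing trace on $B_{r_0}'\subset\partial\Omega$. Because $\varphi_1>0$ in $\Omega$ and the interior sphere condition trivially holds at every point of the flat piece $B_{r_0}'$, Hopf's boundary point lemma yields $\frac{\partial\varphi_1}{\partial x_d}(0)>0$. In view of \eqref{eq:conv_psi_gamma}, this excludes any vanishing order $k\ge 2$ at the origin and identifies the blow-up polynomial as $\psi_1(x)=\frac{\partial\varphi_1}{\partial x_d}(0)\,x_d\in\mathbb{P}^1_{\textup{odd}}$. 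Both hypotheses of the preceding corollary being thereby satisfied, its conclusion specialized to $i=1$ delivers the claimed expansion. No genuine obstacle arises in this deduction: the argument simply consists in recognizing that each standard property of the principal Dirichlet eigenfunction feeds exactly into one of the hypotheses of the corollary just proved.
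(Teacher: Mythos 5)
Your proof is correct and follows essentially the same route the paper takes: invoke the simplicity of $\lambda_1$, the positivity of the first eigenfunction (hence, via Hopf's lemma at the flat boundary point $0$, nonzero normal derivative there), and then apply the preceding corollary with $N=1$, $m=1$, $p=1$, $k_1=1$. The paper states these facts as known whereas you supply their standard proofs, but the logical structure is identical.
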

It is worth noticing that, in these two last results, the coefficients of the first term in the asymptotic expansion of the eigenvalue variation split as a product of two factors: one of them only depending on the behavior of the limit eigenfunctions at the origin and the other one only depending on the geometry of the set $\Sigma$. Moreover, we recall that in \cite{AFT2014} authors prove that the spherical shape for $\Sigma$ maximizes $\mathcal T_{\Pi}(\Sigma)$ among all sections with fixed measure.

The paper is organized as follows. \Cref{sec:facts} is devoted to present basic properties of the \emph{thin $f$-torsional rigidity} and contains preliminary result in view of the main theorems. 
\Cref{sec:perturbation} contains the proof of \Cref{thm:approxEVs}, whereas \Cref{sec:ramification} contains the proof of \Cref{thm:orderEVs}.

\subsection{Mixed Dirichlet--Neumann boundary conditions}\label{subsec:mixed}

The preceeding arguments apply also when we perturb problem \eqref{eq:str_dir_eigen} by prescribing that eigenfunctions satisfy homogeneous Neumann boundary conditions on $\e\Sigma$, in place of attaching a thin tube with section $\e\Sigma$ to the fixed domain $\Omega$.

The problem has been already studied in dimension $2$ in \cite{Gadyl'shin1992}, achieving a full asymptotic expansion of perturbed eigenvalues (see also \cite{AFL2018} for related results) and in any dimension by \cite{FNO2} but only for simple eigenvalues. This last paper is our starting point. 

Let us consider the weak form of the eigenvalue problem 
\begin{equation}\label{eq:mixedBCprobl}
 \begin{bvp}
  -\Delta \xi&= \tilde{\lambda}^\epsilon \xi, &&\text{in }\Omega\\
  \xi&=0, &&\text{on }\partial\Omega\setminus \e\Sigma\\
  \dfrac{\partial \xi}{\partial\nnu}&=0 &&\text{on } \e\Sigma.
 \end{bvp}
\end{equation}
To this aim, we set the functional framework as it appears in \cite{FNO2}, by introducing the space
$
 H^1_{0, \partial\Omega\setminus \e\Sigma}(\Omega),
$
 defined as the closure in $H^1(\Omega)$ of $C^\infty_c(\Omega\cup \e\Sigma)$. 
We say that $\tilde{\lambda}^\e\in\R$ is an \emph{eigenvalue} of \eqref{eq:mixedBCprobl} if there exists $\xi^\e \in  H^1_{0, \partial\Omega\setminus \e\Sigma}(\Omega)\setminus\{0\}$ (named \emph{eigenfunction}) such that 
\[
 \int_{\Omega}\nabla \xi^\e \cdot \nabla v \dx= \tilde{\lambda}^\e \int_{\Omega}  \xi^\e v\dx \quad \text{for all }v \in  H^1_{0, \partial\Omega\setminus \e\Sigma}(\Omega).
\]
For any $\e\in(0,1)$ there exists a non-decreasing sequence of positive eigenvalues
\[
	0<\tilde{\lambda}_1^\e<\tilde{\lambda}_2^\e\leq \cdots\leq \tilde{\lambda}_n^\e\leq \cdots\to +\infty.
\]
When $\e\to0$ the Neumann region disappears and the Dirichlet region covers the entire boundary. One then expects the eigenelements of the mixed Dirichlet-Neumann problem \eqref{eq:mixedBCprobl} to converge to the ones of the limit Dirichlet problem \eqref{eq:str_dir_eigen}. 
This problem revealed to be more involved than its counterpart, in which Neumann boundary condition are prescribed on a large part of $\partial\Omega$ and Dirichlet boundary conditions on a vanishing portion of it. A capacitary approach (such as the one developed in \cite{FNO1} for the case of disappearing Dirichlet region) turns out to be particularly effective when functions are required to vanish in \enquote{small} sets; this is basically related to the known fact that Sobolev spaces are not affected if their functions are prescribed to vanish on zero capacity sets. So, the case introduced in this subsection falls outside a capacitary context. Furthermore, to the best of our knowledge, in literature there is no analogue to the capacity, which can play a similar role in the converse case \eqref{eq:mixedBCprobl}. Because of this, in \cite{FNO2} the authors undertake the problem through another approach, based on Almgren-type monotonicity formula. In this work they prove the convergence of the perturbed spectrum, as $\e\to 0$, to the spectrum of the Dirichlet-Laplaciann (see \cite[Proposition 2.3]{FNO2}), that is 
\begin{equation}\label{eq:stability_mixed}
	\tilde{\lambda}_n^\epsilon\to \lambda_n \quad \text{as }\e\to0 \text{ for any }n\in\N\setminus\{0\}.
\end{equation}
Moreover, in case of simple limit eigenvalues, they provide an explicit asymptotic expansion of the perturbed eigenvalues, which is sharp only when the set $\Sigma$ fulfills certain geometric assumptions. In particular, they proved that, if $\Sigma$ is strictly starshaped with respect to the origin, if $\lambda_N$ is simple and if \eqref{eq:conv_psi_gamma} holds, then
\begin{equation}\label{eq:risultatoFNO2}
	\tilde{\lambda}_N^\e=\lambda_N-\tilde{C}_{k,\Sigma}\,\e^{d-2+2k}+o(\e^{d-2+2k}),\quad\text{as }\e\to 0,
\end{equation}
where
\begin{equation*}
	\tilde{C}_{k,\Sigma}:=-2\inf_{u\in \mathcal{D}^{1,2}(\R^d_+\cup\Sigma)}\left\{ \frac{1}{2}\int_{\R^d_+}\abs{\nabla u}^2\dx+\int_\Sigma u \frac{\partial \psi_k}{\partial\nnu}\dx' \right\}
\end{equation*}
and $\mathcal D^{1,2}(\R^d_+\cup\Sigma)$ denotes the completion of $C^\infty_c(\R^d_+\cup\Sigma)$ with respect to the $L^2$ norm of the gradient. We also observe that here $\nnu=(0,\dots,0,-1)$ and $\frac{\partial\psi_k}{\partial\nnu}=-\frac{\partial \psi_k}{\partial x_d}$.

With the very same method used to prove the results in the previous subsection, we are able to remove both the simplicity assumption on the limit eigenvalue $\lambda_N$ and the geometric assumption on $\Sigma$ and to prove a sharp asymptotic expansion for $\tilde{\lambda}_N^\e$ in the general case. In the same spirit of the previous case, here we are able to detect the proper quantity which measures the magnitude of the perturbation and, consequently, the stability of eigenvalues. It is still a notion of torsional rigidity and plays the role of perfect counterpart of the capacity in the present framework. Let us first introduce the functional
\begin{equation*}
	\tilde{J}^\Omega_{\e\Sigma,f}(u)=\tilde{J}^\e_f(u):=\frac{1}{2}\int_\Omega\abs{\nabla u}^2\dx+\int_{\e\Sigma}u\frac{\partial f}{\partial\nnu}\dx',
\end{equation*}
defined for $u\in H^1_{0,\partial\Omega\setminus\e\Sigma}(\Omega)$, where $f\in C^1(\overline{B_{r_0}^+})$. In the same lines as in the previous subsection, we introduce the notion of relative torsional rigidity of a set which is suitable for our problem.
\begin{definition}\label{def:torsion_functional_mixedbc}
	For any $f\in C^1(\overline{B_{r_0}^+})$ we call the \emph{boundary $f$-torsional rigidity of $\e\Sigma$ relative to $\Omega$} the following quantity
	\begin{align*}
		\mathcal{T}_{\Omega}(\e\Sigma,f):&=-2\inf_{u\in  H^1_{0, \partial\Omega\setminus \e\Sigma}(\Omega)}\tilde{J}^\e_f(u) \\
		&=-2\inf_{u\in  H^1_{0, \partial\Omega\setminus \e\Sigma}(\Omega)} \left\{ \frac12\int_{\Omega}\abs{\nabla u}^2\dx +\int_{\e\Sigma}u\dfrac{\partial f}{\partial \nnu}\dx'\right\}
% 		&=\sup\left\{2\int_{\e\Sigma}u\dfrac{\partial f}{\partial \nu}\dx'+\int_{\Omega_\e}\abs{\nabla u}^2\dx  \colon u\in H^1_{0, \partial\Omega\setminus \e\Sigma}(\Omega) \setminus \{0\}   \right\}.
	\end{align*}
	If $\frac{\partial f}{\partial \nnu}=-1$ on $\e\Sigma$, we denote $\mathcal{T}_{\Omega}(\e\Sigma):=\mathcal{T}_{\Omega}(\e\Sigma,f)$ and we call it  the \emph{boundary torsional rigidity of $\e\Sigma$ relative to $\Omega$}. 
\end{definition}
We point out that Definition \ref{def:torsion_functional} and Definition \ref{def:torsion_functional_mixedbc} are completely matching each other.  
As in the previous case, by standard minimization methods there exists $\tilde{U}^\e_f=\tilde U^{\Omega}_{\e\Sigma,f} \in H^1_{0, \partial\Omega\setminus \e\Sigma}(\Omega) \setminus \{0\}$ achieving $\mathcal{T}_\Omega(\e\Sigma,f)$. We call $\tilde U^\e_f$ the \emph{boundary $f$-torsion function of $\e\Sigma$, relative to $\Omega$}. In particular, $\tilde U^{\e}_f$ satisfies
\[
	\begin{bvp}
		-\Delta \tilde{U}^\e_f &=0, &&\text{in }\Omega, \\
		\tilde{U}^\e_f &=0, &&\text{on }\partial\Omega\setminus\e\Sigma, \\
		\frac{\partial \tilde{U}^\e_f}{\partial\nnu} &=-\frac{\partial f}{\partial \nnu}, &&\text{on }\e\Sigma
	\end{bvp}
\]
in a weak sense, that is $\tilde{U}^\e_f\in H^1_{0,\partial\Omega\setminus\e\Sigma}(\Omega)$ and
\begin{equation}\label{eq:weak_eq_mixed}
 \int_{\Omega}\nabla \tilde U^{\e}_f\cdot \nabla \varphi \dx=-  \int_{\e\Sigma} \varphi\dfrac{\partial f}{\partial \nnu}\dx' \quad \text{for all }  \varphi \in H^1_{0, \partial\Omega\setminus \e\Sigma}(\Omega).
\end{equation}

Let $\lambda_N$ be an eigenvalue to the problem \eqref{eq:mixedBCprobl} with multiplicity $ m$ and $ E( \lambda_N)$ be the associated eigenspace.
Our first result on this problem is the analogue of Theorem \ref{thm:approxEVs}.

\begin{theorem}\label{thm:approxEVsmixed} For $i\in\{1,\dots,m\}$,
\begin{equation*}
	\tilde\lambda_{N+i-1}^\e=\lambda_N-\tilde \mu_i^\e+o(\tilde \chi_\e^2) \mbox{ as }\e\to0,
\end{equation*}
where
\begin{equation*}
	\tilde \chi_\e^2:=\sup\{\mathcal T_{\Omega}(\e\Sigma,u)\,:\,u\in  E(\lambda_N)\mbox{ and } \|u\|_{L^2(\Omega)}=1\}
\end{equation*}
according to Definition \ref{def:torsion_functional_mixedbc}
and $\{\tilde\mu_i^\e\}_{i=1}^m$ are the eigenvalues (taken in non-increasing order) of the quadratic form $\tilde{r}_\e$, defined for $u,v\in E(\lambda_N)$ as
\begin{equation*}
	\tilde r_\e(u,v) := \int_{\Omega}\nabla \tilde U_u^\e\cdot \nabla \tilde U_v^\e+\lambda_N\int_{\Omega} \tilde U_u^\e\,\tilde U_v^\e,
\end{equation*}
where $\tilde U_u^\e$ ($\tilde U_v^\e$) is the boundary $u$-torsion function of $\e\Sigma$ (the boundary $v$-torsion function of $\e\Sigma$, respectively).
\end{theorem}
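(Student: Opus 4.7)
The strategy mirrors that of \Cref{thm:approxEVs}, with Dirichlet torsion functions replaced by boundary torsion functions and the perturbed domain $\Omega_\e$ replaced by $\Omega$ throughout. Let $\{\varphi_1,\ldots,\varphi_m\}$ be an $L^2(\Omega)$-orthonormal basis of $E(\lambda_N)$, and define the approximants
\[
	\tilde\Phi_i^\e := \varphi_i + \tilde U_{\varphi_i}^\e\in H^1_{0,\partial\Omega\setminus\e\Sigma}(\Omega),\qquad i=1,\ldots,m.
\]
Combining \eqref{eq:weak_eq_mixed} with the equation for $\varphi_i$, one checks that each $\tilde\Phi_i^\e$ satisfies the mixed boundary conditions of \eqref{eq:mixedBCprobl} in the weak sense and that $-\Delta\tilde\Phi_i^\e=\lambda_N\varphi_i$ distributionally. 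Thus $\tilde\Phi_i^\e$ lies in the form domain of the mixed operator and differs from being a genuine eigenfunction only by the residual source $\lambda_N\tilde U_{\varphi_i}^\e$.

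Next I would compute the $m\times m$ matrices $A_\e=\bigl(q_\e(\tilde\Phi_i^\e,\tilde\Phi_j^\e)\bigr)$ and $B_\e=\bigl(\langle\tilde\Phi_i^\e,\tilde\Phi_j^\e\rangle_{L^2(\Omega)}\bigr)$ associated with the Rayleigh quotient of $q_\e(u,v):=\int_\Omega\nabla u\cdot\nabla v\dx$ restricted to $V_\e:=\mathrm{span}\{\tilde\Phi_1^\e,\ldots,\tilde\Phi_m^\e\}$. The cross-terms $\int_\Omega\nabla\varphi_i\cdot\nabla\tilde U_{\varphi_j}^\e$ vanish, since $\varphi_i\in H^1_0(\Omega)\subset H^1_{0,\partial\Omega\setminus\e\Sigma}(\Omega)$ is admissible in \eqref{eq:weak_eq_mixed} and $\varphi_i\equiv 0$ on $\e\Sigma\subset\partial\Omega$. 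A Green's identity calculation using $-\Delta\varphi_i=\lambda_N\varphi_i$ then yields the further identity
\[
	\lambda_N\int_\Omega\varphi_i\,\tilde U_{\varphi_j}^\e\dx=\int_\Omega\nabla\tilde U_{\varphi_i}^\e\cdot\nabla\tilde U_{\varphi_j}^\e\dx,
\]
so that $A_\e-\lambda_N B_\e=-R_\e$, where $R_\e$ is the matrix representing $\tilde r_\e$ in the basis $\{\varphi_i\}$. Rearranging $A_\e\beta=\alpha B_\e\beta$ as $(\lambda_N-\alpha)B_\e\beta=R_\e\beta$, and using that $B_\e=I+o(1)$ as $\e\to 0$, the Rayleigh eigenvalues of $q_\e$ restricted to $V_\e$ equal $\lambda_N-\tilde\mu_i^\e+o(\tilde\chi_\e^2)$.

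The final step is to invoke Proposition \ref{p:appEV} with the subspace $V_\e$, which identifies the above Rayleigh values with the true perturbed eigenvalues $\tilde\lambda_{N+i-1}^\e$ up to an error controlled by a dual-norm bound on the residual $-\Delta\tilde\Phi_i^\e-\lambda_N\tilde\Phi_i^\e=-\lambda_N\tilde U_{\varphi_i}^\e$. The main technical obstacle is quantifying this error, namely proving $\|\tilde U_{\varphi_i}^\e\|_{L^2(\Omega)}=o(\tilde\chi_\e)$ so that the remainder is genuinely $o(\tilde\chi_\e^2)$, and separating the $m$ relevant eigenbranches from the rest of the spectrum by the stability \eqref{eq:stability_mixed}. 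Both ingredients should follow from a blow-up analysis of the boundary torsion function, in complete analogy with what has been established earlier for $\mathcal T_{\Omega_\e}(\e\Sigma,f)$, upon replacing the limiting tube domain $\Pi$ by the half-space extension $\R^d_+\cup\Sigma$ appearing in \cite{FNO2}.
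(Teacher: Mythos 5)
Your proposal is correct and follows essentially the same route as the paper, which explicitly states that the proofs of Theorems 1.17 and 1.20 "follow step by step" those of the thin-tube case (Theorem 1.3), carried out in the appropriate functional setting. You correctly set up the approximants $\tilde\Phi_i^\e=\varphi_i+\tilde U_{\varphi_i}^\e$, verify the residual identity $-\Delta\tilde\Phi_i^\e-\lambda_N\tilde\Phi_i^\e=-\lambda_N\tilde U_{\varphi_i}^\e$, compute $A_\e-\lambda_N B_\e=-R_\e$ via \eqref{eq:weak_eq_mixed}, and identify the two remaining ingredients: the $L^2$ smallness $\|\tilde U_{\varphi_i}^\e\|_{L^2(\Omega)}=o(\tilde\chi_\e)$ (the mixed analogue of Lemma \ref{lemma:L^2_norm}) and the spectral stability \eqref{eq:stability_mixed}. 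One small observation you make that is not exploited in the paper's Lemma \ref{l:restrict} is that in the mixed setting the cross-terms $\int_\Omega\nabla\varphi_i\cdot\nabla\tilde U_{\varphi_j}^\e$ vanish identically (since $\varphi_i\equiv 0$ on $\e\Sigma\subset\partial\Omega$), whereas in the tube case they are nonzero and must be cancelled by hand; this is a genuine simplification specific to this version. Two minor technical remarks: when invoking Proposition \ref{p:appEV} you must work with the shifted form $\int_\Omega|\nabla u|^2-\lambda_N\int_\Omega u^2$, not the un-shifted one, since hypothesis (H2) requires the spectrum of $q$ to straddle zero; and the estimate $\|\tilde U_{\varphi_i}^\e\|_{L^2(\Omega)}=o(\tilde\chi_\e)$ is, in the paper, obtained by a Mosco-convergence compactness/contradiction argument (not a blow-up argument as you suggest), though the blow-up analysis of Lemma \ref{l:blowup_mixed} does supply the complementary upper bound on the Dirichlet energy of $\tilde U_{\varphi_i}^\e$.
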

Given the order decomposition of $E(\lambda_N)$, as in \Cref{prop:DecompESintro}, and taking $\varphi\in E_j$ (for $j\in\{1,\dots,p\}$), it is possible to perform a blow-up analysis for a suitable rescaling of the $\varphi$-boundary torsion function $\tilde{U}^\e_\varphi$. Namely, following the steps as in \Cref{subsec:mosco} (outlined in the previous section), one can prove that
\[
	\e^{-d+2-2k_j}\tilde{U}^\e_\varphi(\e x)\quad\text{as well as}\quad \e^{-d+2-2k_k}\mathcal{T}_\Omega(\e\Sigma,\varphi)
\]
admit nontrivial, finite limits as $\e\to 0$. It is then natural to introduce the following quantity, defined for $\Psi\in C^1(\overline{B_1^+})$,
\begin{equation}\label{eq:torsion_blow_up}
	\mathcal{T}_{\R^d_+}(\Sigma,\Psi):=-2\left\{ \frac{1}{2}\int_{\R^d_+}\abs{\nabla u}^2\dx+\int_\Sigma u\frac{\partial\Psi}{\partial\nnu}\dx'\colon u\in \mathcal{D}^{1,2}(\R^d_+\cup \Sigma) \right\}.
\end{equation}
If $\frac{\partial\Psi}{\partial\nnu}=-1$ on $\Sigma$ we denote $\mathcal{T}_{\R^d_+}(\Sigma):=\mathcal{T}_{\R^d_+}(\Sigma,\Psi)$.

Hence, we are able to prove the following (which is the analogous of \Cref{thm:blow_up1}).
\begin{lemma}\label{l:blowup_mixed}
 Let $j\in\{1,\dots,p\}$ and let $\varphi\in E_j$. Then
	\begin{equation*}
		\mathcal{T}_{\Omega}(\e\Sigma,\varphi)=\e^{d-2+2k}\mathcal{T}_{\R^d_+}(\Sigma,\mathfrak{B}_j\varphi)+o(\e^{d-2+2k_j}),\quad\text{as }\e\to 0.
	\end{equation*}
	In addition
	\[
		\e^{-d+2-k_j}\tilde{U}^\e_\varphi(\e x)\to \tilde{U}^{\R^d_+}_{\Sigma,\varphi},\quad\text{in }\mathcal{D}^{1,2}(\R^d_+\cup\Sigma),~\text{as }\e\to 0,
	\]
	where $\tilde{U}^{\R^d_+}_{\Sigma,\varphi}\in \mathcal{D}^{1,2}(\R^d_+\cup \Sigma)$ denotes the function achieving $\mathcal{T}_{\R^d_+}(\Sigma,\mathfrak{B}_j\varphi)$.
\end{lemma}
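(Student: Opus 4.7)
The plan is to follow the blow-up scheme underlying \Cref{thm:blow_up1}, adapted to the mixed Dirichlet--Neumann setting (no tube is attached, and the boundary forcing lives only on $\e\Sigma$). I would introduce the natural rescalings
\[
\hat U^\e(y) := \e^{-k_j}\,\tilde U^\e_\varphi(\e y),\qquad \hat\varphi^\e(y) := \e^{-k_j}\,\varphi(\e y),\qquad y\in \tfrac{1}{\e}\Omega.
\]
A direct change of variables in \eqref{eq:weak_eq_mixed} shows that $\hat U^\e$ lies in $H^1_{0,\partial(\frac{1}{\e}\Omega)\setminus\Sigma}\!\bigl(\tfrac{1}{\e}\Omega\bigr)$ and weakly solves the Neumann problem on $\tfrac{1}{\e}\Omega$ with datum $-\frac{\partial \hat\varphi^\e}{\partial\nnu}$ on $\Sigma$; the same scaling yields the exact identity
\[
\mathcal T_{\Omega}(\e\Sigma,\varphi) \;=\; \e^{d-2+2k_j}\,\mathcal T_{\frac{1}{\e}\Omega}(\Sigma,\hat\varphi^\e),
\]
so that the whole statement reduces to passing to the limit on the right-hand side as $\e\to 0$.

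The next step is a uniform $\mathcal D^{1,2}$-bound on $\hat U^\e$. As a competitor in the rescaled minimization I would use $\eta_R\,\tilde U^{\R^d_+}_{\Sigma,\varphi}$, with $\eta_R$ a smooth cut-off equal to $1$ on $B_{R/2}$ and supported in $B_R$; thanks to \eqref{eq:ass_flat} this is admissible in $H^1_{0,\partial(\frac{1}{\e}\Omega)\setminus\Sigma}\!\bigl(\tfrac{1}{\e}\Omega\bigr)$ as soon as $B_R^+\subset\tfrac{1}{\e}\Omega$. Combined with the convergence $\hat\varphi^\e\to\mathfrak{B}_j\varphi$ in $C^{1,\alpha}(\overline{B_1^+})$, which follows from $\varphi\in E_j$ and \eqref{eq:conv_psi_gamma}, this gives an explicit upper bound for the infimum; a standard trace-plus-coercivity argument on the unbounded set $\R^d_+\cup\Sigma$ then turns it into the uniform estimate $\|\nabla \hat U^\e\|_{L^2}\le C$.

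With the bound in hand, I would extract a subsequence $\hat U^\e\weak \hat U^0$ weakly in $\mathcal D^{1,2}(\R^d_+\cup\Sigma)$ and pass to the limit in the rescaled weak equation tested against any $w\in C^\infty_c(\R^d_+\cup\Sigma)$ (admissible in the $\e$-problem for $\e$ small); the forcing is handled through the $C^{1,\alpha}(\overline\Sigma)$-convergence $\hat\varphi^\e\to\mathfrak{B}_j\varphi$, so that $\hat U^0$ is identified as the minimizer $\tilde U^{\R^d_+}_{\Sigma,\varphi}$ of \eqref{eq:torsion_blow_up}, and uniqueness promotes the convergence to the full family. Finally, testing the rescaled weak equation against $\hat U^\e$ itself gives
\[
\|\nabla \hat U^\e\|_{L^2}^2 \;=\; -\int_\Sigma \hat U^\e\,\frac{\partial \hat\varphi^\e}{\partial\nnu}\,\d y',
\]
whose right-hand side converges, by trace compactness on the bounded fixed set $\Sigma$ together with the $C^{1,\alpha}$-convergence of the forcing, to $\|\nabla \tilde U^{\R^d_+}_{\Sigma,\varphi}\|_{L^2}^2$. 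Convergence of norms plus weak convergence then yields strong convergence in $\mathcal D^{1,2}(\R^d_+\cup\Sigma)$, and both conclusions of the lemma follow by inserting this limit into the scaling identity above.

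The main obstacle I foresee is making rigorous the passage to the limit in a variational problem posed on the varying, unbounded domain $\tfrac{1}{\e}\Omega$ with a boundary forcing on $\Sigma$: this is essentially a Mosco-type statement for the mixed boundary condition problem, and it is the step where the analogy with \Cref{thm:blow_up1} is least mechanical. Two features make it tractable and are worth emphasizing in the actual write-up: the geometry is locally flat near the origin by \eqref{eq:ass_flat}, so $\tfrac{1}{\e}\Omega$ eventually contains every compact subset of $\R^d_+\cup\Sigma$; and $\Sigma$ is a \emph{fixed} bounded Lipschitz set, on which the trace operator behaves uniformly in $\e$ and along weakly convergent sequences.
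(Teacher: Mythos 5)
Your proposal follows the same blow-up scheme that the paper uses for \Cref{thm:blow_up1}, which is precisely what the authors intend when they say the mixed-BC case \enquote{follows step by step} the tube case: rescale, get a uniform $\mathcal D^{1,2}$ bound, extract a weak limit (after extending by zero and identifying the limiting function space), identify the limit via its Euler--Lagrange equation and uniqueness, and upgrade to strong convergence via the energy identity obtained by testing with $\hat U^\e$ itself. The one place where you diverge is the uniform bound: you propose plugging a cut-off of the \emph{limiting} torsion function $\eta_R\,\tilde U^{\R^d_+}_{\Sigma,\varphi}$ into the rescaled minimization and then appealing to a trace-plus-coercivity argument, whereas the paper obtains the bound (for the tube case, in \Cref{lemma:upper_bound_torsion}) directly from the Rayleigh-quotient characterization \eqref{eq:up_bound1} together with Cauchy--Schwarz and a scaled trace inequality on $\e\Sigma$; this avoids any reference to the limit object and yields the $O(\e^{d+2k-2})$ bound without the detour through a quadratic inequality in $\|\nabla\hat U^\e\|_{L^2}$. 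Your version works, but note that the competitor only gives an upper bound on $\inf\tilde J^\e$ (a \emph{lower} bound on $\mathcal T_\Omega(\e\Sigma,\varphi)$), so the \enquote{coercivity} step --- absorbing the linear trace term to close the quadratic inequality --- is not optional and should be made explicit; the paper's route bypasses this entirely. Everything else, including the Mosco-flavoured identification of the limiting function space and the final energy convergence, is correct and matches the paper's argument.
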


\begin{remark}\label{r:simplemixed}
 Of course, even in this case one can consider $m=1$, for this is allowed in Proposition \ref{p:appEV}. In this way, Theorem \ref{thm:approxEVsmixed} provides immediately the main result as stated in \cite{FNO2} if supplied with \Cref{l:blowup_mixed}.
\end{remark}

Taking into account the order decomposition Proposition \ref{prop:DecompESintro} for $E(\lambda_N)$, and the blow-up analysis for scaled torsion functions, we are able to improve the result of Theorem \ref{thm:approxEVsmixed}. Before stating the main theorem, we need the following notation. For $1\le j\le p$, we let $E_j$ as in \Cref{prop:DecompESintro} and we recall that $m_j=\mathrm{dim}\,(E_j)$ so that 
\[m=m_1+\dots+m_j+\dots+m_p.\]
Moreover, we denote by
	\[\tilde{\mu}_{j,1}\ge\dots\ge\tilde{\mu}_{j,\ell}\ge\dots\ge\tilde{\mu}_{\ell,m_j}>0\]
the eigenvalues of the bilinear form
\begin{equation*}
	\tilde{\mathfrak T}_j(u,v)=\int_{\R^d_+} \nabla \tilde{U}_u\cdot \nabla \tilde{U}_v\dx \quad \text{defined for }u,v\in E_j\subseteq E(\lambda_N),
\end{equation*}
where $\tilde{U}_u=\tilde{U}^{\R^d_+}_{\Sigma,u}$ and $\tilde{U}_v=\tilde{U}^{\R^d_+}_{\Sigma,v}$ achieve, respectively, $\mathcal{T}_{\R^d_+}(\Sigma,\mathfrak{B}_j u)$ and $\mathcal{T}_{\R^d_+}(\Sigma,\mathfrak{B}_j v)$. We are now ready to state the main result of this section, which is the analogue of \Cref{thm:orderEVs}.
	\begin{theorem}\label{thm:orderEVsmixed}
		For any $i\in\{1,\dots,m\}$, there holds
		\begin{equation}
			\tilde{\lambda}_{N+i-1}^\e=\lambda_N-\tilde{\mu}_{j,\ell}\,\e^{d-2+2k_{j}}+o(\e^{d-2+2k_{j}}),\quad\mbox{as }\e\to0,
		\end{equation}
		where 
		\[
		(j,\ell)=\begin{cases}
			(1,i) \quad &\text{if } 1\leq i \leq m_1\\
			(2,i-m_1) \quad &\text{if } m_1+1\leq i \leq m_1+m_2\\
			\vdots & \vdots \\
			(p,i-(m-m_p)) \quad &\text{if } m-m_p+1\leq i \leq m
		\end{cases}
		\]
	\end{theorem}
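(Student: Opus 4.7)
The proof should mirror that of Theorem \ref{thm:orderEVs} in the Dirichlet-tube setting, replacing the thin torsional quantities by their boundary counterparts from Definition \ref{def:torsion_functional_mixedbc}, the blow-up domain $\Pi$ by $\mathbb{R}^d_+\cup\Sigma$, and using Lemma \ref{l:blowup_mixed} in place of Theorem \ref{thm:blow_up1}. The three main ingredients are: Theorem \ref{thm:approxEVsmixed} (first-order approximation via $\tilde r_\e$), Proposition \ref{prop:DecompESintro} (order decomposition of $E(\lambda_N)$), and Lemma \ref{l:blowup_mixed} (blow-up of the boundary torsion function).

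First, Theorem \ref{thm:approxEVsmixed} reduces the analysis of each $\tilde\lambda^\e_{N+i-1}$ to that of the eigenvalues $\tilde\mu_i^\e$ of the finite-dimensional form $\tilde r_\e$ on $E(\lambda_N)$, with remainder $o(\tilde\chi_\e^2)$. Next, I would exploit the decomposition $E(\lambda_N)=E_1\oplus\cdots\oplus E_p$ to exhibit a block structure in $\tilde r_\e$: for $u,v\in E_j$, one rewrites $\tilde r_\e(u,v)$ via the weak equation \eqref{eq:weak_eq_mixed} as a boundary integral on $\e\Sigma$, rescales by $y=x/\e$, and applies Lemma \ref{l:blowup_mixed} to obtain
\[
	\tilde r_\e(u,v) = \e^{d-2+2k_j}\,\tilde{\mathfrak T}_j(u,v) + o(\e^{d-2+2k_j});
\]
the $L^2(\Omega)$ piece of $\tilde r_\e$ is always of lower order. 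The analogous rescaling controls the off-diagonal entries for $u\in E_j$, $v\in E_{j'}$ with $j\neq j'$. Working in an $L^2(\Omega)$-orthonormal basis of $E(\lambda_N)$ that respects the decomposition and diagonalizes each $\tilde{\mathfrak T}_j$, the leading-order behaviour of $\tilde r_\e$ identifies $\tilde\mu_i^\e$ with $\tilde\mu_{j,\ell}\,\e^{d-2+2k_j}$ for $i$ in the $j$-th block, with $(j,\ell)$ as in the statement.

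The main obstacle is the remainder in Theorem \ref{thm:approxEVsmixed}, which is $o(\tilde\chi_\e^2)=o(\e^{d-2+2k_1})$: this matches the leading order only for the slowest branches ($j=1$). For each block $j>1$, one must sharpen the remainder to $o(\e^{d-2+2k_j})$, which requires reapplying Proposition \ref{p:appEV} at a finer scale. The natural strategy is an induction on $j$: at stage $j$, one employs as approximating subspace the already-identified perturbed eigenfunctions $\xi^\e_{N},\dots,\xi^\e_{N+m_1+\cdots+m_{j-1}-1}$ from the previously treated blocks, together with $\Phi^\e_\varphi = \varphi+\tilde U^\e_\varphi$ for $\varphi$ in a basis of $E_j$. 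Verifying that the cross-block contributions to $\tilde r_\e$ do not perturb the leading-order block eigenvalues, and matching spectral indices with block labels in the correct order, is the most delicate technical point; it relies crucially on the blow-up analysis and on the orthogonality properties between spherical-harmonic modes of different degrees inherited by the profiles $\tilde U^{\R^d_+}_{\Sigma,\varphi}$.
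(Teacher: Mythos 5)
Your overall plan is the right one, and indeed the paper does not write out this proof at all: it simply states that Theorem \ref{thm:orderEVsmixed} follows \emph{step by step} the proof of Theorem \ref{thm:orderEVs} once one replaces $\mathcal T_{\Omega_\e}(\e\Sigma,\cdot)$ by $\mathcal T_\Omega(\e\Sigma,\cdot)$, $\Pi$ by $\R^d_+\cup\Sigma$, $H^1_0(\Omega_\e)$ by $H^1_{0,\partial\Omega\setminus\e\Sigma}(\Omega)$, and Theorem \ref{thm:blow_up1} by Lemma \ref{l:blowup_mixed}. You identify the correct ingredients (Theorem \ref{thm:approxEVsmixed}, Proposition \ref{prop:DecompESintro}, Lemma \ref{l:blowup_mixed}) and, crucially, you notice the real obstruction: the remainder in Theorem \ref{thm:approxEVsmixed} is only $o(\tilde\chi_\e^2)=o(\e^{d-2+2k_1})$, so the statement is sharp for the first block only and one must re-run Proposition \ref{p:appEV} at finer scales.

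However, your choice of approximating subspace at stage $j$ is not what the paper's iterative scheme (Section \ref{sec:ramification}) uses, and as stated it does not satisfy the hypotheses of Proposition \ref{p:appEV}. You propose to take the span of the already-identified perturbed eigenfunctions from blocks $1,\dots,j-1$ together with $\Pi_\e(E_j)$; this space has dimension $m_1+\cdots+m_j$. To apply Proposition \ref{p:appEV} with that $F$ one would have to take $N'=N$ and $m'=m_1+\cdots+m_j$, but then hypothesis (H2) requires the rescaled $\nu^\e_{N+m_1+\cdots+m_j}$ to be $\ge\gamma$. For $j<p$ this eigenvalue belongs to block $j+1$ and decays \emph{faster} (like $\e^{d-2+2k_{j+1}}$), so after rescaling it tends to $0$ rather than staying away from the small cluster: (H2) fails on the upper side. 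The correct choice, mirroring the tube case, is to drop the already-treated blocks and take the \emph{tail} space $F_j^\e=\Pi_\e(E_j\oplus\cdots\oplus E_p)$, rescale $q_\e$ by $\e^{-(d-2+2k_{j-1})}$, and apply Proposition \ref{p:appEV} with $N'=N+m_1+\cdots+m_{j-1}$ and $m'=m-(m_1+\cdots+m_{j-1})$; then the gap on the upper side comes from $\nu^\e_{N+m}\to\lambda_{N+m}-\lambda_N>0$, and the gap on the lower side comes from the already-established block-$(j-1)$ asymptotics. Finally, the remark about ``orthogonality between spherical-harmonic modes of different degrees'' is not actually used: the off-diagonal blocks of the matrix of $\tilde r_\e$ in a basis adapted to the order decomposition are negligible purely because of the mismatch of scaling exponents, i.e. $\int\nabla\tilde U_u^\e\cdot\nabla\tilde U_v^\e=O(\e^{d-2+k_j+k_{j'}})=o(\e^{d-2+2k_j})$ for $u\in E_j$, $v\in E_{j'}$, $j<j'$, by Cauchy--Schwarz and Lemma \ref{l:blowup_mixed}; no additional orthogonality property of the blow-up profiles is required.
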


For simplicity of exposition, we do not present here the proofs of \Cref{thm:approxEVsmixed} and \ref{thm:orderEVsmixed}, since they follow step by step the proofs of the case of domains with handles attached. It will be sufficient to set all the arguments in the appropriate functional setting, as described above.

Also for this kind of perturbation, we think it is interesting to understand what happens in some particular cases, similarly to what we described for the attachment of a thin tube in the previous section. More precisely, reasoning in a completely analogous way, if $p=1$ and $k_1=1$ one can see that it is possible to find an eigenbasis $\{\varphi_N,\dots,\varphi_{N+m-1}\}\sub E(\lambda_N)$, orthonormal in $L^2(\Omega)$, in such a way that
\[
	\tilde{\mu}_{1,i}=\left(\frac{\partial \varphi_{N+i-1}}{\partial\nnu}(0)\right)^2\,\mathcal{T}_{\R^d_+}(\Sigma)\quad\text{for }i=1,\dots,m.
\]
Before stating the result, we would like to observe that, in view of the characterization of the half-laplacian $(-\Delta_{\R^{d-1}})^{\frac{1}{2}}$ on $\R^{d-1}=\partial\R^d_+$ as a Dirichlet-to-Neumann map on $\R^d_+$, on can easily see that the quantity $\mathcal{T}_{\R^d_+}(\Sigma)$ coincides with $\frac{1}{2}$-fractional torsional rigidity of $\Sigma$ in $\R^{d-1}$. Namely $\mathcal{T}_{\R^d_+}(\Sigma)=\mathcal{T}_{\R^{d-1}}^{\frac{1}{2}}(\Sigma)$, where
\[
	\mathcal{T}_{\R^{d-1}}^{\frac{1}{2}}(\Sigma):=-2\inf\left\{ \frac{1}{2}\norm{u}_{\mathcal{D}^{\frac{1}{2},2}(\R^{d-1})}^2-\int_\Sigma u\colon u\in \mathcal{D}^{\frac{1}{2},2}_0(\Sigma) \right\}
\]
where $\mathcal{D}^{\frac{1}{2},2}_0(\Sigma)$ denotes the completion of $C_c^\infty(\Sigma)$ with respect to the norm
\[
	\norm{u}_{\mathcal{D}^{\frac{1}{2},2}(\R^{d-1})}:=\left( \frac{1}{(2\pi)^{\frac{d-1}{2}}}\int_{\R^{d-1}}\abs{\zeta}\abs{\hat{u}(\zeta)}^2\d\zeta \right)^{\frac{1}{2}}.
\]
Here by $\hat{u}$ we denote the (normalized) Fourier transform of $u$ in $\R^{d-1}$. We thus have the following.
\begin{corollary}
	Let us assume \Cref{prop:DecompESintro} holds with $p=1$ and $k_1=1$. Then there exists a basis $\{\varphi_N,\dots,\varphi_{N+m-1}\}$ of $E(\lambda_N)$, orthonormal in $L^2(\Omega)$ and such that, for all $i\in\{1,\dots,m\}$, there holds
	\[
	\tilde{\lambda}_{N+i-1}^\e=\lambda_N-\left(\frac{\partial \varphi_{N+i-1}}{\partial \nnu}(0)\right)^2\,\mathcal{T}_{\R^{d-1}}^{\frac{1}{2}}(\Sigma)\,\e^{d}+o(\e^d),\quad\text{as }\e\to 0.
	\]
\end{corollary}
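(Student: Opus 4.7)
The plan is to invoke \Cref{thm:orderEVsmixed} in the special case $p=1$, $k_1=1$, which immediately gives
\[
\tilde\lambda_{N+i-1}^\e = \lambda_N - \tilde\mu_{1,i}\,\e^{d} + o(\e^{d}) \quad\text{as }\e\to 0,
\]
so that the task reduces to explicitly diagonalizing the bilinear form $\tilde{\mathfrak T}_1$ on the $m$-dimensional space $E(\lambda_N)$ with respect to the $L^2(\Omega)$ inner product and reading off the eigenvalues.

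First I would compute $\mathfrak{B}_1 u$ for $u\in E(\lambda_N)$: since $u$ vanishes on the flat portion $B_{r_0}'$ of $\partial\Omega$, all tangential first derivatives of $u$ at $0$ vanish, so the blow-up in \Cref{prop:DecompESintro} reduces to $\mathfrak{B}_1 u(x) = \tfrac{\partial u}{\partial x_d}(0)\,x_d = -\tfrac{\partial u}{\partial \nnu}(0)\,x_d$. The definition \eqref{eq:torsion_blow_up} and the linearity of the map $\Psi\mapsto \tilde U^{\R^d_+}_{\Sigma,\Psi}$ (coming from the linearity of the weak problem in the datum $\partial_\nnu\Psi$ on $\Sigma$) then yield $\tilde U^{\R^d_+}_{\Sigma,u} = \tfrac{\partial u}{\partial x_d}(0)\,\tilde U^{\R^d_+}_{\Sigma,x_d}$, whence
\[
\tilde{\mathfrak T}_1(u,v) = \tfrac{\partial u}{\partial x_d}(0)\,\tfrac{\partial v}{\partial x_d}(0)\int_{\R^d_+}\bigl|\nabla \tilde U^{\R^d_+}_{\Sigma,x_d}\bigr|^2\dx.
\]
Testing the weak equation solved by $\tilde U^{\R^d_+}_{\Sigma,x_d}$ against itself shows that this integral equals $\mathcal T_{\R^d_+}(\Sigma)$, which the discussion immediately preceding the corollary identifies with the $\tfrac12$-fractional torsional rigidity $\mathcal T^{1/2}_{\R^{d-1}}(\Sigma)$ via the Dirichlet-to-Neumann characterization of $(-\Delta_{\R^{d-1}})^{1/2}$. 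Combining this with $\bigl(\tfrac{\partial u}{\partial x_d}(0)\bigr)^2 = \bigl(\tfrac{\partial u}{\partial \nnu}(0)\bigr)^2$ gives
\[
\tilde{\mathfrak T}_1(u,v) = \mathcal T^{1/2}_{\R^{d-1}}(\Sigma)\,\alpha(u)\,\alpha(v), \qquad \alpha(u):=\tfrac{\partial u}{\partial \nnu}(0).
\]

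The remaining step is to diagonalize this rank-one symmetric nonnegative form with respect to the $L^2(\Omega)$ inner product restricted to $E(\lambda_N)$. Choosing $\varphi_N$ to be the $L^2(\Omega)$-normalized Riesz representative of the linear functional $\alpha$ on $E(\lambda_N)$ and completing with any $L^2(\Omega)$-orthonormal basis of $\ker\alpha$ produces an orthonormal basis $\{\varphi_N,\dots,\varphi_{N+m-1}\}$ of $E(\lambda_N)$ in which $\tilde{\mathfrak T}_1$ is diagonal, with $i$-th eigenvalue exactly $\tilde\mu_{1,i} = \bigl(\tfrac{\partial \varphi_{N+i-1}}{\partial \nnu}(0)\bigr)^2\,\mathcal T^{1/2}_{\R^{d-1}}(\Sigma)$ (only $\tilde\mu_{1,1}$ is actually nonzero, the remaining ones corresponding to the kernel of $\alpha$). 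Substituting into the expansion from \Cref{thm:orderEVsmixed} yields the claim.

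The main delicate point is only a bookkeeping one, namely the consistent tracking of signs induced by the convention $\nnu=(0,\dots,0,-1)$ together with the normalization $\tfrac{\partial f}{\partial \nnu}=-1$ in \Cref{def:torsion_functional_mixedbc}; no additional spectral analysis is required, since all the hard work is already encapsulated in \Cref{thm:orderEVsmixed} and \Cref{l:blowup_mixed}, and the remainder of the argument is a purely algebraic reduction together with the identification of $\mathcal T_{\R^d_+}(\Sigma)$ with its fractional counterpart on $\R^{d-1}$.
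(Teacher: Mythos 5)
Your argument is correct and coincides with the paper's: invoke \Cref{thm:orderEVsmixed}, observe that tangential derivatives at $0$ vanish so $\mathfrak{B}_1 u = \tfrac{\partial u}{\partial x_d}(0)\,x_d$, use linearity of $\Psi\mapsto\tilde U^{\R^d_+}_{\Sigma,\Psi}$ to reduce $\tilde{\mathfrak T}_1$ to a rank-one form, diagonalize, and substitute using the identification $\mathcal T_{\R^d_+}(\Sigma)=\mathcal T^{1/2}_{\R^{d-1}}(\Sigma)$. One small remark on your parenthetical ``only $\tilde\mu_{1,1}$ is actually nonzero, the remaining ones corresponding to the kernel of $\alpha$'': under the hypothesis $p=1$, $k_1=1$, every nonzero $u\in E(\lambda_N)$ has vanishing order exactly $1$, hence $\alpha(u)=\tfrac{\partial u}{\partial x_d}(0)\neq 0$; since $\mathfrak B_1$ maps into the one-dimensional space $\mathbb P^1_{\textup{odd}}=\Span\{x_d\}$, this injectivity forces $m=1$ and $\ker\alpha=\{0\}$, so there are in fact no further eigenvalues. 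This is consistent with the paper's convention $\tilde\mu_{j,1}\ge\dots\ge\tilde\mu_{j,m_j}>0$ (all strictly positive), and the paper itself only notes the $m=1$ constraint explicitly for $d=2$, so your formulation is not wrong — just the diagonalization procedure turns out to be degenerate. The remainder of the argument (identification of $\mathcal T_{\R^d_+}(\Sigma)$ with the $\tfrac12$-fractional torsional rigidity, the sign bookkeeping with $\nnu=(0,\dots,0,-1)$) is handled correctly and matches what the paper sketches.
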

We can also investigate, as a remarkable instance, the perturbation of the first eigenvalue and obtain the following.
\begin{corollary}
	Let $\varphi_1\in H^1_0(\Omega)$ be a normalized eigenfunction corresponding to $\lambda_1$. Then there holds
	\[
	\lambda_1^\e=\lambda_1-\left(\frac{\partial \varphi_1}{\partial x_d}(0)\right)^2\,\mathcal{T}_{\R^{d-1}}^{\frac{1}{2}}(\Sigma)\,\e^d+o(\e^d),\quad\text{as }\e\to 0.
	\]
\end{corollary}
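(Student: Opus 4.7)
The plan is to obtain this corollary as an immediate specialization of the preceding one (the $p=1$, $k_1=1$ case of Theorem~\ref{thm:orderEVsmixed}) applied to $N=1$. It suffices to verify three facts about the first Dirichlet eigenpair $(\lambda_1,\varphi_1)$: that $\lambda_1$ is simple, that the order decomposition of $E(\lambda_1)$ has $p=1$, and that the associated vanishing order is $k_1=1$. The first two facts are essentially the same: on the bounded connected domain $\Omega$, classical spectral theory for the Dirichlet Laplacian gives simplicity of $\lambda_1$ and positivity (up to sign) of any corresponding eigenfunction, via the maximum principle or a Krein--Rutman argument. Hence $m=m_1=1$ and the decomposition of $E(\lambda_1)$ in Proposition~\ref{prop:DecompESintro} is trivial with $p=1$.

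Next I would fix a normalized $\varphi_1>0$ in $\Omega$ and apply Hopf's boundary-point lemma at the origin. By the flatness assumption \eqref{eq:ass_flat}, $0\in\partial\Omega$ is a regular point with interior normal $\partial/\partial x_d$; since $\varphi_1>0$ in $\Omega$ and $\varphi_1(0)=0$, Hopf gives $\partial_{x_d}\varphi_1(0)>0$. Combining this with the blow-up identity \eqref{eq:conv_psi_gamma}, one sees that
\[
\frac{\varphi_1(rx)}{r}\;\longrightarrow\; \frac{\partial\varphi_1}{\partial x_d}(0)\,x_d \quad\text{in }C^{1,\alpha}(\overline{B_1^+})\text{ as }r\to 0,
\]
and the limit is a nontrivial harmonic polynomial homogeneous of degree one and odd in $x_d$. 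Therefore the order of $\varphi_1$ at $0$ is exactly $k_1=1$, and $\mathfrak{B}_1\varphi_1(x)=\partial_{x_d}\varphi_1(0)\,x_d$.

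With these three facts in hand, the previous corollary applies directly: there is a (trivial) basis of $E(\lambda_1)$, namely $\{\varphi_1\}$, realizing the diagonalization of $\tilde{\mathfrak T}_1$, and the asymptotic expansion reduces to
\[
\tilde\lambda_1^\e = \lambda_1 - \Bigl(\tfrac{\partial\varphi_1}{\partial\nnu}(0)\Bigr)^{2}\,\mathcal{T}_{\R^{d-1}}^{1/2}(\Sigma)\,\e^d + o(\e^d).
\]
Since at $0$ the outward normal is $\nnu=(0,\ldots,0,-1)$, one has $(\partial\varphi_1/\partial\nnu)^2=(\partial\varphi_1/\partial x_d)^2$, yielding the stated expression. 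There is essentially no obstacle: the only minor point to check is that the coefficient is well defined despite the sign ambiguity in the normalization of $\varphi_1$, which is immediate because only the square of $\partial_{x_d}\varphi_1(0)$ enters. The two nontrivial inputs used are the simplicity/positivity of the first Dirichlet eigenfunction and Hopf's lemma, both of which are classical and cost no extra argument here.
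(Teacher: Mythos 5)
Your argument is correct and follows the same path the paper takes: simplicity of $\lambda_1$ and positivity of $\varphi_1$ give $m=1$, $p=1$, $k_1=1$, so the preceding corollary (the $p=1$, $k_1=1$ specialization of Theorem~\ref{thm:orderEVsmixed}) applies with $N=1$, and $(\partial\varphi_1/\partial\nnu)^2=(\partial\varphi_1/\partial x_d)^2$ since $\nnu=(0,\dots,0,-1)$. You make the vanishing-order claim slightly more explicit than the paper by invoking Hopf's lemma (note that $\varphi_1$ is superharmonic, so the elementary version applies despite the $+\lambda_1\varphi_1$ term), but the reasoning is the same.
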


\bigskip

The case of mixed Dirichlet--Neumann boundary conditions possesses additional features worthy to come to light. Firstly, let us recall the following definition, already introduced in \Cref{subsec:mixed}.
\begin{definition}\label{def:torsion_energy}
 Let $\Gamma\sub\partial\Omega$ be a relatively open set. We call \emph{boundary torsional rigidity of $\Gamma$ relative to $\Omega$} the quantity
 \[
 	\mathcal{T}_\Omega(\Gamma):=-2\inf\left\{ \frac{1}{2}\int_\Omega\abs{\nabla u}^2\dx-\int_\Gamma u\ds\colon u \in H^1_{0,\partial\Omega\setminus\Gamma}(\Omega) \right\}
 \]
  which coincides to the energy of the unique weak solution of the problem
 \begin{equation}\label{eq:Ueps1}
  \begin{bvp}
  -\Delta U_{\Gamma}&=0, &&\text{in }\Omega\\
  \frac{\partial U_\Gamma}{\partial\nnu} &=1, &&\text{on }\Gamma\\
  U_{\Gamma}&=0, &&\text{on }\partial\Omega\setminus\Gamma.
 \end{bvp} 
 \end{equation}
\end{definition}
We also recall that $H^1_{0,\partial\Omega\setminus\Gamma}(\Omega)$ denotes the closure of $C_c^\infty(\Omega\cup\Gamma)$ in $H^1(\Omega)$. We would like to stress that Problem \eqref{eq:Ueps1} has to do with the so-called \emph{boundary torsional rigidity of $\Omega$} as it is introduced in \cite{BrascoGonzalezIspizua2022}, there denotated by $T(\Omega,\delta)$. 
As explained in \cite[Section 2]{BrascoGonzalezIspizua2022}, $T(\Omega;\delta)$ is modeled on the trace Sobolev embedding $W^{1,2}(\Omega)\hookrightarrow L^1(\partial\Omega)$ and it is closely related to the Steklov eigenvalue problem. In this case it is worthwhile to mention that, equivalently
 \begin{equation}\label{eq:torsion_mixed_sup}
 \mathcal T_\Omega(\Gamma) = \sup_{\varphi\in H^1_{0,\partial\Omega\setminus\Gamma}(\Omega)\setminus \{0\}} \dfrac{\displaystyle \left(\int_{\Gamma} \varphi \ds\right)^2}{\displaystyle \int_{\Omega}|\nabla \varphi|^2\dx}, 
\end{equation}
for it is related to the best constant for the Sobolev embedding $H^1_{0,\partial\Omega\setminus\Gamma}(\Omega)\hookrightarrow L^1(\Gamma)$. Moreover, $\mathcal T_{\Omega}(\Gamma)$ is related to the lowest of the so-called  \emph{Dirichlet--Steklov eigenvalues}. 
As pointed out in \cite{Seo2021} (see also \cite{Agranovich2006}), the so-called \emph{Dirichlet--Steklov eigenvalue problem}
\[
 \begin{bvp}
  -\Delta u&=0,& &\text{in }\Omega\\
  u&=0,& &\text{on }\partial\Omega\setminus \Gamma\\
  \dfrac{\partial u }{\partial \nnu}&=\sigma u,&&\text{on }\Gamma
 \end{bvp}
\]
is equivalent to the eigenvalue problem of the Dirichlet-to-Neumann operator, which in fact admits a sequence of positive eigenvalues
\[
 0< \sigma_1(\Omega,\Gamma) \leq \sigma_2(\Omega,\Gamma)\leq \ldots \to +\infty.
\]
The lowest of them has the following variational characterization
\begin{equation}\label{eq:sigma1}
 \sigma_1(\Omega,\Gamma) = \inf\left\{ \dfrac{\displaystyle \int_\Omega |\nabla u|^2\dx}{\displaystyle \int_\Gamma u^2\ds}: u\in H^1_{0,\partial\Omega\setminus\Gamma}(\Omega)\setminus\{0\}\right\}.
\end{equation}
By definition and \eqref{eq:torsion_mixed_sup}, we have
\begin{align*}
 \dfrac1{\mathcal T_\Omega(\Gamma)} &= \dfrac{\displaystyle \int_\Omega |\nabla U_\Gamma|^2\dx}{\displaystyle \left(\int_\Gamma U_\Gamma\ds\right)^2} \geq \dfrac{\displaystyle \int_\Omega |\nabla U_\Gamma|^2\dx}{\displaystyle \mathcal{L}^{d-1}(\Gamma)\int_\Gamma {U_\Gamma}^2\ds}\geq \dfrac{\sigma_1(\Omega,\Gamma)}{\mathcal{L}^{d-1}(\Gamma)} 
\end{align*}
where we applied Cauchy-Schwarz Inequality to gain the first inequality and \eqref{eq:sigma1} to reach the second one. Summing up, we obtain 
\begin{equation}\label{eq:Polyatype}
 \mathcal T_\Omega(\Gamma)\sigma_1(\Omega,\Gamma) \leq \mathcal{L}^{d-1}(\Gamma)
\end{equation}
which can be considered a \emph{Dirichlet--Steklov version} of the classical Polya inequality (\cite{PolyaSzego1951}). Finally,
Equation  \eqref{eq:Ueps1} has got relevant physical meanings.
On one hand, it models the vertical displacement of a membrane under an external pressure which is concentrated near the boundary (see \cite[Theorem 4.1]{Arrieta2008} for the rigorous limit process): the considered membrane can move in the vertical direction keeping a horizontal angle.
On the other hand, solutions to \eqref{eq:Ueps1}
are stationary solutions of the heat equation that models temperature in a homogeneous and isotropic heat conductor. This is subjected to a constant heat flux through a small part of the boundary whereas the temperature is kept constant in the remaining part.

\section{Facts about \texorpdfstring{$\mathcal T_{\Omega_\e}(\e\Sigma,f)$}{the thin torsional rigidity}}\label{sec:facts}

In this section we collect some basic facts regarding the notion of \emph{thin $f$-torsional rigidity of $\e\Sigma$} introduced before.

\subsection{Basics}
Firstly, we briefly mention the variational framework for $\mathcal T_{\Omega_\e}(\e\Sigma,f)$. 
As already mentioned, by standard minimization methods, it can be proved that, for any $f\in C^1(\overline{B_{r_0\e}^+})$, there exists a unique $U^{\Omega_\e}_{\e\Sigma,f}=U^\e_{f} \in H^1_0(\Omega_\e)\setminus\{0\}$ such that 
\[
J_\e^f(U^\e_{f}) = \inf_{u\in H^1_0(\Omega_\e)} J_\e^f(u),
\]
where $J_\e^f$ is as in \eqref{eq:functional_torsion}. In particular, $U^\e_{f}$ satisfies
\begin{equation}\label{eq:weak_eq_tubes}
0=\d J_\e^f(U^\e_{f})[\varphi] = \int_{\Omega_\e}\nabla U^\e_{f}\cdot \nabla \varphi \dx-  \int_{\e\Sigma} \varphi\dfrac{\partial f}{\partial x_d}\dx' \qquad \text{for all }  \varphi \in H^1_0(\Omega_\e).  
\end{equation}
Letting $\varphi=U^\e_{f}$ in the previous equation we get
\[
	\int_{\Omega_\e}\abs{\nabla U^\e_{f}}^2\dx=\int_{\e\Sigma}U^\e_{f}\dfrac{\partial f}{\partial x_d}\dx',
\]
hence obtaining
\begin{equation}\label{eq:torsione-energia}
	\mathcal{T}_{\Omega_\e}(\e\Sigma,f)= \int_{\Omega_\e}\abs{\nabla U^\e_{f}}^2\dx = \int_{\e\Sigma}U^\e_{f}\dfrac{\partial f}{\partial x_d}\dx'.
\end{equation}

The first property deals with equivalent definitions for the \emph{thin $f$-torsional rigidity of $\e\Sigma$}.
\begin{lemma}\label{l:equivalence}
 Definition \eqref{def:torsion_functional} is equivalent to the following
\begin{equation}\label{eq:up_bound1}
	\mathcal{T}_{\Omega_\e}(\e\Sigma,f)=\sup_{u\in H^1_0(\Omega_\e)\setminus \{0\}}\frac{\left(\displaystyle\int_{\e\Sigma}u\frac{\partial
	f}{\partial x_d}\dx'\right)^2}{\displaystyle\int_{\Omega_\e}\abs{\nabla u}^2\dx}.
\end{equation}
\end{lemma}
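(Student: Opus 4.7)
The plan is a standard scaling/duality argument exploiting the fact that $J_f^\e$ is quadratic plus linear, so along every ray in $H^1_0(\Omega_\e)$ the maximization problem reduces to a one-variable quadratic. To keep the notation light, set
\[
P(u) := \int_{\e\Sigma} u\,\frac{\partial f}{\partial x_d}\dx', \qquad Q(u) := \int_{\Omega_\e} |\nabla u|^2\dx,
\]
so that by Definition \ref{def:torsion_functional} we have $\mathcal{T}_{\Omega_\e}(\e\Sigma,f) = \sup_{u\in H^1_0(\Omega_\e)} \bigl(2P(u)-Q(u)\bigr)$, while the right-hand side of \eqref{eq:up_bound1} is $\sup_{u\neq 0} P(u)^2/Q(u)$. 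Note that both suprema are nonnegative: the first by testing with $u=0$, the second trivially. We can also assume $P\not\equiv 0$, else both quantities vanish and there is nothing to prove.

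For the inequality $\mathcal{T}_{\Omega_\e}(\e\Sigma,f) \le \sup_{u\neq 0} P(u)^2/Q(u)$, I would observe that for every $u\in H^1_0(\Omega_\e)\setminus\{0\}$ the elementary identity
\[
\frac{P(u)^2}{Q(u)} - \bigl(2P(u)-Q(u)\bigr) = \frac{\bigl(P(u)-Q(u)\bigr)^2}{Q(u)} \ge 0
\]
holds, so $2P(u)-Q(u)\le P(u)^2/Q(u)$; the same bound is trivial when $u=0$. Taking the supremum over $u$ on the left yields the claim.

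For the converse inequality, I fix any $u\in H^1_0(\Omega_\e)\setminus\{0\}$ and plug the test function $tu$ (for $t\in\R$) into the original variational definition:
\[
2P(tu)-Q(tu) = 2tP(u) - t^2 Q(u).
\]
This one-dimensional quadratic in $t$ is maximized at $t^\star = P(u)/Q(u)$, producing the value $P(u)^2/Q(u)$. Hence $\mathcal{T}_{\Omega_\e}(\e\Sigma,f) \ge P(u)^2/Q(u)$ for every admissible $u$, and taking the sup gives the matching inequality. Combining both bounds yields \eqref{eq:up_bound1}. I do not anticipate any real obstacle: the only mildly delicate point is treating the degenerate case $P\equiv 0$ separately, and checking that $Q(u)>0$ whenever $u\ne 0$, which is immediate from the Poincar\'e inequality on the bounded domain $\Omega_\e$.
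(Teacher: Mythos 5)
Your proof is correct and takes essentially the same approach as the paper: both reduce the variational characterization of $\mathcal{T}_{\Omega_\e}(\e\Sigma,f)$ to maximizing the one-variable quadratic $t\mapsto 2tP(u)-t^2Q(u)$ along rays $\{tu\}$, with optimizer $t^\star=P(u)/Q(u)$. Your version is marginally cleaner in that you optimize over $t\in\R$ (the paper restricts to $t>0$, which is a harmless imprecision since one may replace $u$ by $-u$ when $P(u)\le0$), and your ``$\le$'' inequality via completing the square is just the observation that $t=1$ is sub-optimal.
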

\begin{proof}
 By definition,
\begin{align*}
	\mathcal{T}_{\Omega_\e}(\e\Sigma,f)&=\sup_{u\in H^1_0(\Omega_\e)\setminus\{0\}}\sup_{t>0}\left\{ 2t\int_{\e\Sigma}u \frac{\partial f}{\partial x_d}\dx'-t^2\int_{\Omega_\e}\abs{\nabla u}^2\dx \right\}
\end{align*}
and the inner supremum is actually attained at
\[
t=\frac{\displaystyle \int_{\e\Sigma}u\frac{\partial f}{\partial x_d}\dx'}{\displaystyle \int_{\Omega_\e}\abs{\nabla u}^2\dx}.
\]
Substituting this value into the previous equality leads to \eqref{eq:up_bound1}.
\end{proof}

\begin{remark}\label{rem:best_approx}
 We find useful to note that if $\varphi\in E(\lambda_N)$ and $u_\e$ is a perturbed eigenfunction then by \eqref{eq:str_dir_eigen} and \eqref{eq:perturbed_problem} we have
 \[
  \int_{\e\Sigma} \dfrac{\partial \varphi}{\partial x_d}u_\e = (\lambda_N - \lambda_\e) \int_\Omega \varphi u_\e. 
 \]
 From the latter equality and Lemma \ref{l:equivalence} it follows the meaningful estimate 
 \[
  \lambda_\e \mathcal T_{\Omega_\e}(\e\Sigma, \varphi) \geq \left((\lambda_N - \lambda_\e) \int_\Omega \varphi u_\e\right)^2 .
 \]
 On the other hand, if we denote the bounded linear 
 \begin{align*}
  \mathcal F: H^1_0(\Omega_\e) &\to H^{-1}(\Omega_\e)\\
  u &\mapsto -\int_{\e\Sigma} \dfrac{\partial \varphi}{\partial x_d}u, 
 \end{align*}
then by \eqref{eq:up_bound1}
\[
 \sqrt{\mathcal T_{\Omega_\e}(\e\Sigma,\varphi)}=\| \mathcal F \|_*, 
\]
as the \emph{thin $\varphi$- torsion function} $U_\varphi^\e$ is the least energy element in $\mathcal F^{-1}(\| \mathcal F \|_*) \subseteq H^1_0(\Omega_\e)$. 
\end{remark}

The next properties deal with its behavior as $\e\to0$. 
\begin{lemma}
 If $\e_1>\e_2$ then for any $f\in C^1(\overline{B_{r_0\e_1}^+})$ we have 
 \[
 \mathcal{T}_{\Omega_{\e_1}}({\e_1}\Sigma,f)\geq \mathcal{T}_{\Omega_{\e_2}}({\e_2}\Sigma,f).                                                \]
\end{lemma}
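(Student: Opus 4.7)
The plan is to use the infimum characterization of $\mathcal T_{\Omega_\e}(\e\Sigma,f)$ given in \Cref{def:torsion_functional} and to produce a competitor for the larger parameter by extending the minimizer associated with the smaller one by zero. The first observation is the domain inclusion $\Omega_{\e_2}\subseteq\Omega_{\e_1}$: the fixed part $\Omega$ is common to both, while $\e_2\Sigma\subseteq\e_1\Sigma$ (by scaling, in the natural geometric framework used throughout the paper) and $T_{\e_2}\subseteq T_{\e_1}$ is immediate from $T_\e=\e\Sigma\times(-1,0]$.

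Let $U^{\e_2}_f\in H^1_0(\Omega_{\e_2})$ denote the unique minimizer of $J^{\e_2}_f$, and let $\tilde U$ be its extension by zero to $\Omega_{\e_1}\setminus\Omega_{\e_2}$. Since $U^{\e_2}_f$ has vanishing trace on $\partial\Omega_{\e_2}$, the function $\tilde U$ belongs to $H^1_0(\Omega_{\e_1})$ and its distributional gradient agrees with $\nabla U^{\e_2}_f$ on $\Omega_{\e_2}$ and is zero outside. Next I would evaluate $J^{\e_1}_f$ at $\tilde U$: the Dirichlet part reduces to $\tfrac{1}{2}\int_{\Omega_{\e_2}}|\nabla U^{\e_2}_f|^2\dx$ by the above gradient description, whereas the boundary integral $\int_{\e_1\Sigma}\tilde U\,\frac{\partial f}{\partial x_d}\dx'$ collapses to $\int_{\e_2\Sigma}U^{\e_2}_f\,\frac{\partial f}{\partial x_d}\dx'$, since $\tilde U\equiv 0$ on $\e_1\Sigma\setminus\e_2\Sigma$. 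Hence
\[
J^{\e_1}_f(\tilde U)=J^{\e_2}_f(U^{\e_2}_f)=-\tfrac12\mathcal T_{\Omega_{\e_2}}(\e_2\Sigma,f),
\]
and using $\tilde U$ as a competitor in the infimum defining $\mathcal T_{\Omega_{\e_1}}(\e_1\Sigma,f)$ produces $\mathcal T_{\Omega_{\e_1}}(\e_1\Sigma,f)\geq-2J^{\e_1}_f(\tilde U)=\mathcal T_{\Omega_{\e_2}}(\e_2\Sigma,f)$, which is the claim.

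No serious obstacle is anticipated: this is the standard monotonicity-by-extension argument, and the only technical point deserving verification is that the zero-extension truly sits in $H^1_0(\Omega_{\e_1})$, which is routine once one notes that $U^{\e_2}_f$ has zero trace on the interior interface $\partial\Omega_{\e_2}\cap\Omega_{\e_1}$, itself Lipschitz because $\Sigma$ has Lipschitz boundary. The mechanism that makes the thin setting work as cleanly as the classical bulk one is precisely the vanishing of $\tilde U$ on the annular region $\e_1\Sigma\setminus\e_2\Sigma$, which guarantees that the boundary forcing term transfers without loss between the two parameters.
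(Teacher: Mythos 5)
Your argument is correct and is exactly the paper's (very terse) proof spelled out: the paper simply invokes the inclusion $H^1_0(\Omega_{\e_2})\subseteq H^1_0(\Omega_{\e_1})$, and what you have done is unpack why that inclusion yields the inequality — namely, the zero extension of $U^{\e_2}_f$ is an admissible competitor for the $\e_1$-problem with the same Dirichlet energy and the same boundary pairing, because the extension vanishes on $\e_1\Sigma\setminus\e_2\Sigma$. One small caveat worth noting: both your proof and the paper's implicitly use $\e_2\Sigma\subseteq\e_1\Sigma$ (hence $\Omega_{\e_2}\subseteq\Omega_{\e_1}$), which holds when $\Sigma$ is star-shaped about the origin but is not automatic for a general Lipschitz $\Sigma$; since the paper's one-line proof relies on the same inclusion, you are on identical footing.
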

\begin{proof}
 The statement is obvious thanks to the inclusion $H^1_0(\Omega_{\e_2}) \subseteq H^1_0(\Omega_{\e_1})$.
\end{proof}

\begin{lemma}\label{l:torsione_infinitesima}
 For any $f\in E(\lambda_N)$ we have that 
 \[
  \mathcal T_{\Omega_{\e}}(\e\Sigma,f) \to 0 \quad \text{as }\e\to0.
 \]
\end{lemma}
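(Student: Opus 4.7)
The plan is to combine the energy identity \eqref{eq:torsione-energia} with a Cauchy--Schwarz estimate, exploiting two ingredients: the regularity of $f$ at $0$ and a uniform trace inequality along the fixed-height tube $T_\e$. Both are quite soft, so the proof is short and yields only convergence (with the crude rate $\e^{d-1}$), not the sharp one.

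First, I would note that $f\in E(\lambda_N)$ satisfies $-\Delta f = \lambda_N f$ in $\Omega$ and vanishes on $B_{r_0}'\subseteq\partial\Omega$. By odd reflection across $\{x_d=0\}$ together with interior elliptic regularity for the resulting Helmholtz equation, $f$ is smooth in a neighborhood of the origin. In particular $\bigl\|\partial f/\partial x_d\bigr\|_{L^\infty(B'_{r_0/2})}<\infty$, so for all sufficiently small $\e$
\[
\left\|\frac{\partial f}{\partial x_d}\right\|_{L^2(\e\Sigma)} \leq C\,\e^{(d-1)/2}.
\]

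Next, I would prove a uniform trace inequality on the tube. Since $U^\e_f\in H^1_0(\Omega_\e)$ vanishes on $\partial\Omega_\e$, and in particular on the bottom $\e\Sigma\times\{-1\}$ and the lateral boundary $\partial(\e\Sigma)\times(-1,0)$ of $T_\e$, the fundamental theorem of calculus gives, for a.e.\ $x'\in\e\Sigma$,
\[
U^\e_f(x',0) = \int_{-1}^0 \frac{\partial U^\e_f}{\partial x_d}(x',s)\,\d s.
\]
Applying Cauchy--Schwarz in $s$ (using that the tube has height $1$) and integrating in $x'$ yields
\[
\|U^\e_f\|_{L^2(\e\Sigma)}^2 \leq \int_{T_\e} \left|\frac{\partial U^\e_f}{\partial x_d}\right|^2\dx \leq \int_{\Omega_\e}|\nabla U^\e_f|^2\dx = \mathcal{T}_{\Omega_\e}(\e\Sigma,f),
\]
where the last equality is \eqref{eq:torsione-energia}.

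Finally, combine these via Cauchy--Schwarz applied to the identity \eqref{eq:torsione-energia}:
\[
\mathcal{T}_{\Omega_\e}(\e\Sigma,f) = \int_{\e\Sigma} U^\e_f\,\frac{\partial f}{\partial x_d}\dx' \leq \|U^\e_f\|_{L^2(\e\Sigma)}\left\|\frac{\partial f}{\partial x_d}\right\|_{L^2(\e\Sigma)} \leq C\,\e^{(d-1)/2}\sqrt{\mathcal{T}_{\Omega_\e}(\e\Sigma,f)},
\]
whence $\mathcal{T}_{\Omega_\e}(\e\Sigma,f)\leq C^2\,\e^{d-1}\to 0$ as $\e\to 0$, since $d\geq 2$. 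There is no real obstacle; the only point worth checking carefully is the tube trace inequality, and this is immediate thanks to the unit height of $T_\e$ and the vanishing of $U^\e_f$ on the tube's bottom face.
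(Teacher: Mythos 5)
Your proof is correct, and it takes a genuinely different route from the paper's. The paper starts from the variational characterization of \Cref{l:equivalence}, applies Cauchy--Schwarz, bounds the factor $\int_{\e\Sigma}(\partial_{x_d}f)^2\dx'$ by the trace embedding $H^1(B_{r_0}^+)\hookrightarrow L^2(B_{r_0}')$ (without exploiting any $\e$-decay of this factor at this stage), and controls $\sup_u \int_{\e\Sigma}u^2\dx'\big/\int_{\Omega_\e}|\nabla u|^2\dx$ by a scaling argument concentrated on $B_\e^+\cup T_\e$, obtaining $\mathcal T_{\Omega_\e}(\e\Sigma,f)=O(\e)$. You instead work directly from the energy identity \eqref{eq:torsione-energia}, observe that odd reflection across $\{x_d=0\}$ together with interior elliptic regularity makes $f$ smooth near the origin (the paper uses the same observation in the proof of \Cref{prop:DecompES}), so that $\|\partial_{x_d}f\|_{L^2(\e\Sigma)}=O(\e^{(d-1)/2})$, and derive the tube trace inequality $\|U^\e_f\|_{L^2(\e\Sigma)}^2\leq\int_{T_\e}|\partial_{x_d}U^\e_f|^2\dx$ from the fundamental theorem of calculus along the unit-height tube. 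This gives $O(\e^{d-1})$, a sharper crude bound for $d>2$, and bypasses the supremum formulation entirely. What the paper's route buys is reuse: its chain of estimates is invoked verbatim in \Cref{lemma:upper_bound_torsion}, where inserting the $O(\e^{d+2k-3})$ decay of $\int_{\e\Sigma}(\partial_{x_d}f)^2\dx'$ yields the sharp rate $O(\e^{d+2k-2})$. Your tube trace constant is $O(1)$ rather than the $O(\e)$ of the paper's rescaled Poincar\'e-type bound, so carrying your scheme forward with the vanishing order $k$ would land at $O(\e^{d+2k-3})$, one power of $\e$ short of the sharp rate; this is the one place where the paper's setup is strictly stronger, though it is irrelevant for the present qualitative lemma.
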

\begin{proof}
Taking into account \eqref{eq:up_bound1}, by Cauchy-Schwarz Inequality, the trace embedding $H^1(B_{r_0}^+)\hookrightarrow L^2(\e\Sigma)$ and regularity of eigenfunctions we have that
	\begin{align}
	\mathcal{T}_\Omega(\e\Sigma,f)&\leq \sup_{u\in
		H^1_0(\Omega_\e)\setminus
		\{0\}}\dfrac{\displaystyle\int_{\e\Sigma}u^2\ds\int_{\e\Sigma}\left(\frac{\partial f}{\partial x_d}\right)^2\dx'}{\displaystyle\int_\Omega\abs{\nabla
			u}^2\dx}\notag\\\label{eq:st1}&
			=\int_{\e\Sigma}\left(\frac{\partial f}{\partial x_d}\right)^2\dx'\sup_{u\in
		H^1_0(\Omega_\e)\setminus
		\{0\}}\dfrac{\displaystyle\int_{\e\Sigma}u^2\dx'}{\displaystyle\int_\Omega\abs{\nabla
			u}^2\dx}\\
			&\label{eq:st11}\leq C_{d,\Omega,r_0}\norm{\frac{\partial f}{\partial x_d}}_{H^1(B_{r_0}^+)}^2
	\sup_{u\in H^1_0(\Omega_\e)\setminus \{0\}}\frac{\int_{\e\Sigma}u^2\dx'}{\int_{\Omega_\e}\abs{\nabla u}^2\dx}.
\end{align}
By scaling we have
%\todo{\tiny Serve però assumere che $\partial_{x_d}f\in L^2(\e\Sigma)$ mi sa}
	\begin{align}\label{eq:2st}
		\sup_{u\in H^1_0(\Omega_\e)\setminus
			\{0\}}\frac{\int_{\e\Sigma}u^2\dx'}{\int_{\Omega_\e}\abs{\nabla
				u}^2\dx}&\leq \sup_{u\in H^1_0(\Omega_\e)\setminus
			\{0\}}\frac{\int_{\e\Sigma}u^2\dx'}{\int_{B_{\e}^+\cup T_\e}\abs{\nabla
				u}^2\dx}\notag\\
		&\leq \sup_{u\in H^1_{0,\partial(B_1^+\cup T_1)\setminus S_1^+}(B_1^+)}\frac{\e\int_{\Sigma}u^2\dx'}{\int_{B_{1}^+\cup T_1}\abs{\nabla u}^2\dx}  =C_\Sigma\,\e
	\end{align}
	where
	\begin{equation*}
		C_\Sigma  =
		\sup_{u\in H^1_{0,\partial(B_1^+\cup T_1)\setminus S_1^+}(B_1^+)}\frac{\int_{\Sigma}u^2\dx'}{\int_{B_{1}^+\cup T_1}\abs{\nabla u}^2\dx} >0
	\end{equation*}
	and, for any compact set $K\sub\overline{B_1^+}$, the space $H^1_{0,K}(B_1^+)$ is defined as the closure of $C_c^\infty(\overline{B_1^+}\setminus K)$ with respect to the $H^1$ norm. Actually, for regular $K$ there holds $H^1_{0,K}(B_1^+)=\{u\in H^1(B_1^+)\colon u=0~\text{on }K\}$.
Invoking \eqref{eq:st11} and \eqref{eq:2st} we conclude the proof.
\end{proof}

\subsection{Blow-up analysis for the thin \texorpdfstring{$f$-torsion function}{torsion function}}
\label{subsec:mosco}
As already mentioned in the introduction, spectral stability for this problem is ensured by the results in \cite{daners2003}. It is a consequence of the uniform convergence of the resolvents. 
Nevertheless, in order to perform a blow-up analysis as  $\e\to0$ for the thin torsion function, we need a fundamental notion of convergence of sets (or functional spaces): it is the so-called \emph{convergence in the sense of Mosco}. In our setting of scaling handles it is established in \cite[Section 7]{daners2003}. We report here the definition for future reference. 
\begin{definition}\label{def:Mosco}
	Let $\mathfrak{H}_\e$, $\mathfrak{H}_0$ and $\mathfrak{H}$ be Hilbert spaces such that $\mathfrak{H}_\e,\mathfrak{H}_0\sub \mathfrak{H}$ for all $\e\in (0,1)$. We say that $\mathfrak{H}_\e$ \emph{converges to $\mathfrak{H}_0$ in the sense of Mosco in $\mathfrak{H}$} if the following hold:
	\begin{enumerate}
		\item[(M1)] if $v_\e\in \mathfrak{H}_\e$ for all $\e\in (0,1)$ and $v_\e\weak v$ weakly in $\mathfrak{H}$, as $\e\to 0$, then $v\in \mathfrak{H}_0$;
		\item[(M2)] for any $v\in \mathfrak{H}_0$ there exists a sequence $\{v_\e\}_{\e\in (0,1)}$ such that $v_\e\in \mathfrak{H}_\e$ for all $\e\in (0,1)$ and $v_\e\to v$ strongly in $\mathfrak{H}$.
	\end{enumerate}
\end{definition}

We start this last subsection giving an important lemma for the forthcoming analysis.
\begin{lemma}\label{lemma:L^2_norm} Let $f\in E(\lambda_N)$ and let $U_f^\e\in H^1_0(\Omega_\e)$ be the thin $f$-torsion function of $\e\Sigma$. Then
	\[
		\int_{\Omega_\e} |U^\e_f|^2\dx=o(\mathcal{T}_{\Omega_\e}(\e\Sigma,f)),\quad\text{as }\e\to 0.
	\]
\end{lemma}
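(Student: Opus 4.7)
My plan is a contradiction argument after normalization, combined with a capacity argument at the origin. Set $\mathcal T_\e := \mathcal T_{\Omega_\e}(\e\Sigma,f)$ (the statement is trivial when $\mathcal T_\e = 0$) and
\[
	W_\e := U_f^\e/\sqrt{\mathcal T_\e},
\]
so that the energy identity \eqref{eq:torsione-energia} gives $\|\nabla W_\e\|_{L^2(\Omega_\e)}^2 = 1$. Extending $W_\e$ by zero outside $\Omega_\e$, I will view it as an element of $H^1_0(D)$ for the fixed reference domain $D := \Omega \cup T_1 \supseteq \Omega_\e$, on which Poincar\'e's inequality yields a uniform $H^1$-bound. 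Assume for contradiction that, along some sequence $\e_n \to 0^+$, one has $\|W_{\e_n}\|_{L^2(\Omega_{\e_n})}^2 \geq \delta > 0$.

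By weak compactness and Rellich--Kondrachov, up to a subsequence $W_{\e_n} \weak W$ weakly in $H^1_0(D)$ and strongly in $L^2(D)$, so $\|W\|_{L^2(D)}^2 \geq \delta$; it remains to prove $W\equiv 0$, which is the sought contradiction. Testing \eqref{eq:tors_function_weak} against any $\varphi \in C_c^\infty(\Omega \setminus\{0\})$, which lies in $H^1_0(\Omega_{\e_n})$ for $n$ large since in that regime $\supp(\varphi) \cap \e_n\Sigma = \emptyset$, yields $\int_D \nabla W_{\e_n} \cdot \nabla \varphi = 0$; passing to the limit,
\[
	-\Delta W = 0 \quad \text{in } \mathcal D'(\Omega\setminus\{0\}).
\]

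Next I will show that $W|_\Omega \in H^1_0(\Omega)$. Since the lateral walls of $T_{\e_n}$ lie in $\partial\Omega_{\e_n}$ and $W_{\e_n}\equiv 0$ there, Poincar\'e's inequality in the thin cylinder $T_{\e_n} = \e_n\Sigma\times(-1,0)$ gives $\|W_{\e_n}\|_{L^2(T_{\e_n})}^2 \leq C\e_n^2 \|\nabla W_{\e_n}\|_{L^2}^2 = C\e_n^2 \to 0$, while $W_{\e_n}\equiv 0$ on $T_1\setminus T_{\e_n}$; hence $W \equiv 0$ a.e.\ on $T_1$, which combined with $W \in H^1_0(D)$ entails $W|_\Omega \in H^1_0(\Omega)$ (this is exactly the (M1) property of the Mosco convergence $H^1_0(\Omega_\e) \to H^1_0(\Omega)$ recalled in \Cref{subsec:mosco}). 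Finally, since $d\geq 2$, the singleton $\{0\}$ has zero $H^1$-capacity, so no nonzero element of $H^{-1}(\Omega)$ is supported at the origin; therefore $-\Delta W = 0$ extends from $\Omega\setminus\{0\}$ to all of $\Omega$, and Dirichlet uniqueness yields $W\equiv 0$.

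The main obstacle I anticipate is the justification of the limit equation for the normalized sequence: the blow-up factor $1/\sqrt{\mathcal T_\e}$ amplifies the surface source $\partial_d f\cdot \delta_{\e\Sigma}$ in \eqref{eq:tors_function_weak}, which is concentrated on a set shrinking to $\{0\}$, so a priori this could produce a nonzero distribution supported at the origin in the limit. The capacity argument---exploiting $\delta_0 \notin H^{-1}(\R^d)$ for $d\geq 2$---is precisely what rules this out and forces $W$ to be identically zero.
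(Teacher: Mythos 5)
Your argument is correct and, structurally, follows the same route as the paper's proof: normalize the thin torsion function, extract a weak $H^1$ limit $W$ and a strong $L^2$ limit by compactness and Mosco convergence, show $W$ is harmonic in $\Omega$ with zero boundary data, conclude $W\equiv 0$, and contradict. The change of normalization (dividing by $\sqrt{\mathcal{T}_{\Omega_\e}(\e\Sigma,f)}$ instead of by $\|U_f^\e\|_{L^2(\Omega_\e)}$, as the paper does) is immaterial; either choice works.

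That said, the capacity argument you append --- which you flag as the ``main obstacle'' --- is in fact vacuous and reveals a small misreading of the geometry. The junction point $0$ lies on $\partial\Omega$, not in $\Omega$, so $\Omega\setminus\{0\} = \Omega$; and $\e\Sigma\subset\partial\Omega$ for every $\e$. Consequently any $\varphi\in C_c^\infty(\Omega)$ already vanishes on $\e\Sigma$ for \emph{every} $\e$, not just along a tail of the sequence, the boundary source $\int_{\e\Sigma}\varphi\,\partial_{x_d} f\dx'$ is identically zero whenever $\varphi$ is tested in $H^1_0(\Omega)$, and there is simply no removable-singularity step to perform. The paper reaches $\int_\Omega\nabla W\cdot\nabla\varphi\dx = 0$ for all $\varphi\in H^1_0(\Omega)$ in one line, without any worry about an amplified distribution concentrating at the origin. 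Your removable-singularity step is ``$A\Rightarrow A$'' and can be deleted. One further minor point: $D=\Omega\cup T_1$ need not contain $\Omega_\e$ unless $\e\Sigma\subset\Sigma$ (e.g.\ if $\Sigma$ is an annulus this fails), so a slightly larger reference domain such as $\Omega\cup\bigl(B'_{r_0}\times(-1,0]\bigr)$ should be used; this is cosmetic.
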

\begin{proof}
 Let us assume by contradiction that there exists a
  sequence $\e_n\to 0$ and a constant $C>0$ such that
\begin{equation*}
  \int_{\Omega_\e}|U^{\e_n}_f|^2\dx \ge \frac1C\mathcal T_{\Omega_\e}(\Sigma_{\e_n},f).
\end{equation*}
We set
\begin{equation*}
	W_n:=\frac{U^{\e_n}_f}{\|U^{\e_n}_f\|_{L^2(\Omega_\e)}}.
\end{equation*}
We have 
\begin{equation*}
 \left\|W_n\right\|_{L^2(\Omega_\e)}=1
\end{equation*}
and recalling \eqref{eq:torsione-energia}
\begin{equation*}
 \left\|\nabla W_n\right\|_{L^2(\Omega_\e)}^2=\frac1{\|U^{\e_n}_f\|_{L^2(\Omega_\e)}^2}T_{\Omega_\e}(\Sigma_{\e_n},f)\le C.
\end{equation*}
By the weak compactness of the unit ball of $H^1_0(\Omega_{\e_0})$, the
compactness of the inclusion $H^1_0(\Omega_{\e_0})\subset L^2(\Omega_{\e_0})$ and thanks to the convergence of the perturbed domains in sense of Mosco (see Definition \ref{def:Mosco}), there
exists an increasing sequence of integers $(n_k)_{k\ge 1}$ and a
function $W\in H^1_0(\Omega)$ such that
$\left(W_{n_k}\right)_{k\ge 1}$ converges to $W$ when $k$ goes to
$+\infty$, weakly in $H^1_0(\Omega_{\e_0})$ and strongly in $L^2(\Omega_{\e_0})$.
We have that at the same time $\left\|W\right\|_{L^2(\Omega)}=1$ and $\int_\Omega \nabla W\cdot \nabla \varphi =0 $ for any $\varphi \in H^1_0(\Omega)$, therefore $W$ is identically $0$. We
have reached a contradiction and proved the lemma.
\end{proof}

We now turn to the very aim of the subsection. In order to give the blow-up result on the thin $f$-torsion function we start with an estimate on its energy. 
\begin{lemma}\label{lemma:upper_bound_torsion}
Let $f\in E(\lambda_N)$ be such that
\[
\frac{f(\e x)}{\e^k}\to \psi_k(x)\quad\text{in }C^{1,\alpha}(\overline{B_1^+})~\text{as }\e\to 0,
\]
for some integer $k\geq 1$ and some harmonic polynomial $\psi_k$, homogeneous of degree $k$. 
Then
	\[
		\mathcal{T}_{\Omega_\e}(\e\Sigma,f)=O(\e^{d+2k-2}),\quad\text{as }\e\to 0.
	\]
\end{lemma}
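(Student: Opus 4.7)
The plan is to combine the variational characterization in Lemma \ref{l:equivalence} with a pointwise estimate on $\partial f/\partial x_d$ near the origin and the scaled trace inequality already derived inside the proof of Lemma \ref{l:torsione_infinitesima}. This refines the $o(1)$ bound of Lemma \ref{l:torsione_infinitesima} into a sharp power of $\e$.

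First, from $f(\e x)/\e^k \to \psi_k(x)$ in $C^{1,\alpha}(\overline{B_1^+})$ I would read off the convergence of the $x_d$-derivative: writing $\partial_{x_d}[\e^{-k} f(\e x)] = \e^{-(k-1)} (\partial_{x_d} f)(\e x)$, the map $x \mapsto \e^{-(k-1)}(\partial_{x_d} f)(\e x)$ converges in $C^{\alpha}(\overline{B_1^+})$ to $\partial_{x_d}\psi_k$. In particular this family is uniformly bounded, so there exists $C_1>0$ (independent of $\e$) with
\[
\left\|\tfrac{\partial f}{\partial x_d}\right\|_{L^\infty(B_{r_0\e}^+)} \le C_1\,\e^{k-1}.
\]
Restricting to $\e\Sigma \subset B_{r_0\e}'$ and integrating gives
\[
\int_{\e\Sigma}\Bigl(\tfrac{\partial f}{\partial x_d}\Bigr)^2 \dx' \le C_1^2\,\e^{2(k-1)}\,\mathcal{L}^{d-1}(\e\Sigma) = C_2\,\e^{d+2k-3}.
\]

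Second, I would insert this into the supremum formula \eqref{eq:up_bound1}. By Cauchy–Schwarz on $\e\Sigma$,
\[
\mathcal{T}_{\Omega_\e}(\e\Sigma,f) \le \left(\int_{\e\Sigma}\Bigl(\tfrac{\partial f}{\partial x_d}\Bigr)^2\dx'\right) \sup_{u \in H^1_0(\Omega_\e)\setminus\{0\}} \frac{\int_{\e\Sigma} u^2\dx'}{\int_{\Omega_\e}|\nabla u|^2\dx},
\]
exactly as in \eqref{eq:st1}. The remaining supremum is bounded by $C_\Sigma\,\e$ via the scaling argument in \eqref{eq:2st} (the one-scale change of variables $x \mapsto x/\e$ that maps $\Omega_\e$ onto $\tfrac{1}{\e}\Omega_\e \supset B_1^+ \cup T_1$).

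Combining the two bounds gives
\[
\mathcal{T}_{\Omega_\e}(\e\Sigma,f) \le C_2\,\e^{d+2k-3}\cdot C_\Sigma\,\e = C\,\e^{d+2k-2},
\]
which is exactly the claim. The argument is essentially an upgrade of Lemma \ref{l:torsione_infinitesima}: the new input is only the improved $L^\infty$ bound $|\partial_{x_d} f| \lesssim \e^{k-1}$ on the small set $\e\Sigma$, which follows directly from the $C^{1,\alpha}$ Bers-type expansion of $f$ at $0$. There is no real obstacle; the only point to double-check is that the convergence hypothesis on $f$ indeed passes to the derivative (which it does trivially from $C^{1,\alpha}$), and that the constant in the scaled trace inequality is independent of $\e$, as already noted in the proof of Lemma \ref{l:torsione_infinitesima}.
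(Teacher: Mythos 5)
Your proof is correct and takes essentially the same approach as the paper: both bound $\int_{\e\Sigma}(\partial_{x_d}f)^2\dx'$ by $O(\e^{d+2k-3})$ using the $C^{1,\alpha}$ convergence hypothesis (you via an $L^\infty$ bound $|\partial_{x_d}f|\lesssim\e^{k-1}$ on $B_{r_0\e}^+$, the paper via a direct change of variables), and then multiply by the scaled trace factor $O(\e)$ from \eqref{eq:2st}. The only cosmetic difference is the intermediate pointwise step; the key inputs and exponents are identical.
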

\begin{proof}
We start from \eqref{eq:st1}. Moreover, by assumption there holds
\begin{equation}\label{eq:st3}
	\int_{\e\Sigma}\left(\frac{\partial f}{\partial {x_d }}\right)^2\dx'=\e^{d+2k-3}\int_\Sigma \left[\frac{\partial}{\partial {x_d }}\left(\frac{f(\e x')}{\e^{k}}\right)\right]^2\dx'=O( \e^{d+2k-3}),
\end{equation}
as $\e\to 0$. The conclusion follows from
	\eqref{eq:st1},
	\eqref{eq:2st}, and \eqref{eq:st3}.
\end{proof}

\begin{theorem}\label{thm:blow_up1}
Let $U_f^\e\in H^1_0(\Omega_\e)$ be the thin $f$-torsion function of $\e\Sigma$. Under the same assumptions as in Lemma \ref{lemma:upper_bound_torsion} there holds
	\begin{equation*}
		\hat{U}_\e(x):=\dfrac{U_f^\e(\e x)}{\e^k}\to U_{\Sigma,\psi_k}^\Pi(x)\quad\text{in }\mathcal{D}^{1,2}(\Pi)~\text{as }\e\to 0,
	\end{equation*}
where $U_{\Sigma,\psi_k}^\Pi$ achieves $\mathcal{T}_\Pi(\Sigma,\psi_k)$ as defined in \eqref{eq:torsion_blow_up}.
	Moreover,
	\begin{equation*}
		\mathcal{T}_{\Omega_\e}(\e\Sigma,f)=\e^{d-2+2k}\mathcal{T}_{\Pi}(\Sigma,\psi_k)+o(\e^{d-2+2k}),\quad\text{as }\e\to 0.
	\end{equation*}
\end{theorem}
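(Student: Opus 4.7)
\textbf{Proof plan for Theorem \ref{thm:blow_up1}.} The strategy is a standard blow-up combined with $\Gamma$-type arguments to identify the limit as the minimizer defining $\mathcal{T}_\Pi(\Sigma,\psi_k)$. I start from the change of variables $y=\e x$: differentiating $\hat U_\e(x)=U_f^\e(\e x)/\e^k$ and testing the weak equation \eqref{eq:tors_function_weak} against rescaled functions $\varphi(y)=\e^k\eta(y/\e)$ with $\eta\in H^1_0(\frac{1}{\e}\Omega_\e)$, every integral in \eqref{eq:tors_function_weak} carries the same factor $\e^{d+2k-2}$, yielding the rescaled weak formulation
\begin{equation}\label{eq:rescaled_weak_plan}
\int_{\frac{1}{\e}\Omega_\e}\nabla \hat U_\e\cdot\nabla\eta\dx=\int_\Sigma \eta\,\frac{\partial \hat f^\e}{\partial x_d}\dx',\qquad\text{for all }\eta\in H^1_0\big(\tfrac{1}{\e}\Omega_\e\big),
\end{equation}
where $\hat f^\e(x):=f(\e x)/\e^k\to\psi_k$ in $C^{1,\alpha}(\overline{B_1^+})$ by hypothesis. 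In the same way one computes that $\mathcal{T}_{\Omega_\e}(\e\Sigma,f)=\e^{d+2k-2}\int_{\frac{1}{\e}\Omega_\e}|\nabla\hat U_\e|^2\dx$, so Lemma \ref{lemma:upper_bound_torsion} provides a uniform bound $\|\nabla\hat U_\e\|_{L^2(\frac{1}{\e}\Omega_\e)}\le C$.

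Next, I extend each $\hat U_\e$ by zero to all of $\R^d$ (which is legitimate since $\hat U_\e\in H^1_0(\frac{1}{\e}\Omega_\e)$) and exploit weak compactness to extract a subsequence $\hat U_{\e_n}\weak \hat U$ in the homogeneous space $\mathcal{D}^{1,2}(\R^d)$ (with the Sobolev embedding filling the gap in low dimensions via local $L^2$--compactness). I then argue that $\hat U\in\mathcal{D}^{1,2}(\Pi)$: any $\hat U_\e$ vanishes outside $\frac{1}{\e}\Omega_\e$, and since $\frac{1}{\e}\Omega_\e\to\Pi$ monotonically in the sense of set inclusion on compacta (because $\frac{1}{\e}\Omega\nearrow \R^d_+$ near the flat piece of $\partial\Omega$ and $\frac{1}{\e}T_\e\nearrow T$), property (M1) of Mosco convergence (Definition \ref{def:Mosco}) forces $\hat U$ to vanish outside $\Pi$. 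To identify $\hat U$, I fix any test function $\eta\in C^\infty_c(\Pi)$: for $\e$ small enough, $\supp\eta\subset \frac{1}{\e}\Omega_\e$, so \eqref{eq:rescaled_weak_plan} applies, and passing to the limit (using the uniform $C^1$ convergence of $\hat f^\e$ to $\psi_k$ on $\overline\Sigma$ and the compactness of $\Sigma$) gives
\begin{equation}\label{eq:limit_weak_plan}
\int_\Pi\nabla\hat U\cdot\nabla\eta\dx=\int_\Sigma \eta\,\frac{\partial \psi_k}{\partial x_d}\dx'.
\end{equation}
Density of $C^\infty_c(\Pi)$ in $\mathcal{D}^{1,2}(\Pi)$ extends \eqref{eq:limit_weak_plan} to all test functions, which is exactly the Euler--Lagrange equation characterising $U^\Pi_{\Sigma,\psi_k}$; uniqueness of the minimizer forces $\hat U=U^\Pi_{\Sigma,\psi_k}$, so the whole sequence converges and not merely a subsequence.

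To upgrade weak to strong convergence and get the energy asymptotics, I test \eqref{eq:rescaled_weak_plan} against $\hat U_\e$ itself, obtaining $\|\nabla\hat U_\e\|_{L^2}^2=\int_\Sigma \hat U_\e\,\partial_{x_d}\hat f^\e\dx'$; by the trace theorem on $B_{r_0}^+$ and the weak convergence, the right-hand side tends to $\int_\Sigma U^\Pi_{\Sigma,\psi_k}\,\partial_{x_d}\psi_k\dx'=\|\nabla U^\Pi_{\Sigma,\psi_k}\|_{L^2(\Pi)}^2=\mathcal{T}_\Pi(\Sigma,\psi_k)$, which combined with weak lower semicontinuity yields norm convergence and hence strong convergence in $\mathcal{D}^{1,2}(\Pi)$. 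Finally, multiplying through by $\e^{d+2k-2}$ gives the claimed expansion
\[
\mathcal{T}_{\Omega_\e}(\e\Sigma,f)=\e^{d+2k-2}\int_{\frac{1}{\e}\Omega_\e}|\nabla \hat U_\e|^2\dx = \e^{d+2k-2}\,\mathcal{T}_\Pi(\Sigma,\psi_k)+o(\e^{d+2k-2}).
\]

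\textbf{Main obstacle.} The delicate point is justifying the passage from \eqref{eq:rescaled_weak_plan} to \eqref{eq:limit_weak_plan} and the inclusion $\hat U\in\mathcal{D}^{1,2}(\Pi)$: the domains $\frac{1}{\e}\Omega_\e$ are unbounded and vary with $\e$, so one must carefully verify the Mosco-type convergence $H^1_0(\frac{1}{\e}\Omega_\e)\to \mathcal{D}^{1,2}(\Pi)$ (property (M2) being the less trivial half, as test functions supported in $\Pi$ but reaching deep into $T$ or $\R^d_+$ must be approximated by ones supported in the finite tube $\frac{1}{\e}T_\e$ and in $\frac{1}{\e}\Omega$; a cut-off plus diagonal argument handles this). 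Once this is in place, the rest is routine weak/strong convergence machinery.
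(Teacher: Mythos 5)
Your proof follows essentially the same route as the paper: rescale the weak equation, extend by zero to get a bounded family in $\mathcal{D}^{1,2}(\R^d)$, extract a weakly convergent subsequence, identify the limit through compactly supported test functions and the $C^{1,\alpha}$ convergence of $\hat f^\e\to\psi_k$, invoke uniqueness of the minimizer, and finally pass to the limit in the energy identity $\|\nabla\hat U_\e\|^2=\int_\Sigma\hat U_\e\,\partial_{x_d}\hat f^\e$ via trace compactness on $\Sigma$ to upgrade to strong convergence and obtain the asymptotic expansion. The only cosmetic difference is in showing that the weak limit lies in $\mathcal{D}^{1,2}(\Pi)$: where you invoke a Mosco-type convergence of the scaled domains, the paper argues locally via the sets $F_R=B_R^+\cup T$ and the boundary-vanishing subspace $H^1_{0,(\partial F_R)^-}(F_R)$, which is the concrete way to carry out the same step.
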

\begin{proof}
	From Lemma \ref{lemma:upper_bound_torsion} we deduce that
	\begin{equation}\label{eq:blow_up1}
		%\int_{\Pi}|\nabla \hat{U}_\e|^2\dx=
		\int_{\frac{1}{\e}\Omega\cup T}|\nabla \hat{U}_\e|^2\dx\leq C,
	\end{equation}
	for some $C>0$ independent from $\e$, thus implying that
	 $\{\hat{U}_\e\}_\e$ is bounded in $\mathcal{D}^{1,2}(\R^d)$, if $\hat{U}_\e$ is meant to be trivially extended in $\R^d\setminus (\frac{1}{\e}\Omega\cup T)$. Then there exist a subsequence (still denoted by $\{\hat{U}_\e\}_\e$) and a function $W\in \mathcal{D}^{1,2}(\R^d)$ such that
	\begin{align}
		&\hat{U}_\e\rightharpoonup W \quad\text{in }\mathcal{D}^{1,2}(\R^d), \label{eq:blow_up2}\\
		&\hat{U}_\e\to W\quad\text{in }L^2(\Sigma),\label{eq:blow_up3}
	\end{align}
	as $\e\to 0$ by compactness of trace embedding. Now, for any $R>0$ such that $\Sigma\sub B_R'$, let $F_R:=B_R^+\cup T$ and $(\partial F_R)^-:=\partial F_R\cap \{x_d\leq 0\}$. Since $\hat{U}_\e\restr{F_R}\in H^1_{0,(\partial F_R)^-}(F_R)$ for $\e$ sufficiently small, one can pass to the weak limit as $\e\to 0$ and obtain that $W\in H^1_{0,(\partial F_R)^-}(F_R)$. We recall that 
	\[
		H^1_{0,(\partial F_R)^-}(F_R):=\{ u\in H^1(F_R)\colon u=0~\text{on }(\partial F_R)^- \}.
	\]
	 Combining this with the fact that $W\in \mathcal{D}^{1,2}(\R^d)$, one can easily prove that $W\in \mathcal{D}^{1,2}(\Pi)$. Now, from \eqref{eq:blow_up2} and \eqref{eq:conv_psi_gamma} and the equation satisfied by $\hat{U}_\e$, one can easily derive the equation satisfied by $W$, which is
	\[
		\int_{\Pi}\nabla W\cdot\nabla\varphi\dx=\int_{\Sigma}\varphi\frac{\partial\psi_k}{\partial x_d}\quad\text{for all }\varphi\in \mathcal{D}^{1,2}(\Pi).
	\]
	By the uniqueness of the minimizer of $\mathcal T_{\Pi}(\Sigma,\psi_k)$ (see also  \cite[Proposition 2.2]{FO} and \cite[Lemma 2.4]{FT-tubi}) we have that $W=U_{\Sigma,\psi_k}^\Pi$. Finally
	\begin{multline*}
		\e^{-d-2k+2}\mathcal{T}_{\Omega_\e}(\e\Sigma,f)=\int_{\frac{1}{\e}\Omega\cup T}|\nabla\hat{U}_\e|^2\dx=\int_{\Sigma}\hat{U}_\e\frac{\partial}{\partial x_d}\left(\frac{f(\e x)}{\e^k}\right)\dx' \\ \to \int_{\Sigma}U_{\Sigma,\psi_k}^\Pi\frac{\partial\psi_k}{\partial x_d}\dx'=\int_{\Pi}|\nabla U_{\Sigma,\psi_k}^\Pi|^2\dx=\mathcal{T}_{\Pi}(\Sigma,\psi_k)
	\end{multline*}
	as $\e\to 0$, and the proof is concluded.
\end{proof}

\section{Perturbation of eigenvalues}\label{sec:perturbation}

Our subsequent analysis aims at a perturbation theory relying on the new quantity defined above: the \emph{thin $f$-torsional rigidity}.

We introduce the quantity $\chi_\e$:
\begin{equation}
\label{eq:chiEps}
	\chi_\e^2:=\sup\{\mathcal T_{\Omega_\e}(\e\Sigma,u)\,:\,u\in E(\lambda_N)\mbox{ and } \|u\|_{L^2(\Omega)}=1\}
\end{equation}

\begin{lemma} \label{l:error}
	There holds
	\[
		\chi_\e\to 0,\quad\text{as }\e\to 0.
	\]
\end{lemma}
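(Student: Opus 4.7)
The plan is to exploit the fact that $E(\lambda_N)$ is finite-dimensional, so that one may reduce the supremum defining $\chi_\e^2$ to finitely many quantities, each of which already vanishes in the limit by \Cref{l:torsione_infinitesima}.

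First, I would fix an $L^2(\Omega)$-orthonormal basis $\{\varphi_N,\dots,\varphi_{N+m-1}\}$ of $E(\lambda_N)$. Any $u\in E(\lambda_N)$ with $\|u\|_{L^2(\Omega)}=1$ can then be written as $u=\sum_{i=0}^{m-1}c_i\varphi_{N+i}$ with $\sum_{i=0}^{m-1}c_i^2=1$. The idea is to combine this expansion with the equivalent characterization of the thin torsional rigidity provided by \Cref{l:equivalence}.

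More precisely, for any test function $v\in H^1_0(\Omega_\e)\setminus\{0\}$, linearity of $u\mapsto \partial u/\partial x_d$ together with the Cauchy--Schwarz inequality applied to the coefficients $c_i$ gives
\[
\left(\int_{\e\Sigma}v\,\frac{\partial u}{\partial x_d}\dx'\right)^{\!2}=\left(\sum_{i=0}^{m-1}c_i\int_{\e\Sigma}v\,\frac{\partial \varphi_{N+i}}{\partial x_d}\dx'\right)^{\!2}\le m\sum_{i=0}^{m-1}c_i^2\left(\int_{\e\Sigma}v\,\frac{\partial \varphi_{N+i}}{\partial x_d}\dx'\right)^{\!2}.
\]
Dividing by $\int_{\Omega_\e}|\nabla v|^2\dx$ and taking the supremum over $v$, the estimate ``supremum of a sum is at most the sum of suprema'' (which holds here precisely because each summand is nonnegative) yields
\[
\mathcal T_{\Omega_\e}(\e\Sigma,u)\le m\sum_{i=0}^{m-1}c_i^2\,\mathcal T_{\Omega_\e}(\e\Sigma,\varphi_{N+i})\le m\max_{0\le i\le m-1}\mathcal T_{\Omega_\e}(\e\Sigma,\varphi_{N+i}),
\]
where in the last step I used $\sum_i c_i^2=1$. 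Taking the supremum over all unit $u\in E(\lambda_N)$ gives
\[
\chi_\e^2\le m\max_{0\le i\le m-1}\mathcal T_{\Omega_\e}(\e\Sigma,\varphi_{N+i}),
\]
and since the right-hand side is a finite maximum of quantities each tending to $0$ by \Cref{l:torsione_infinitesima}, the conclusion $\chi_\e\to 0$ follows.

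There is essentially no serious obstacle here: the whole argument rests on the nonnegativity that makes the sup-of-sums bound work, and on finite-dimensionality of $E(\lambda_N)$, which turns the continuous supremum on the unit sphere into a controlled combination of finitely many pointwise vanishing quantities. One could alternatively invoke compactness of the unit sphere of $E(\lambda_N)$ together with a continuity/uniformity argument in $\e$, but the basis-expansion route above is cleaner and avoids any further technicalities.
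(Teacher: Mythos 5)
Your proof is correct and follows essentially the same strategy as the paper: expand a unit vector $u\in E(\lambda_N)$ in a fixed orthonormal basis, use elementary inequalities (Cauchy--Schwarz and $(\sum a_i)^2\le m\sum a_i^2$) to bound $\mathcal{T}_{\Omega_\e}(\e\Sigma,u)$ by $m\max_i \mathcal{T}_{\Omega_\e}(\e\Sigma,\varphi_{N+i})$, and conclude via \Cref{l:torsione_infinitesima}. The only (cosmetic) difference is that you work with the Rayleigh-quotient characterization from \Cref{l:equivalence}, whereas the paper expands the energy $\int_{\Omega_\e}|\nabla U_u^\e|^2$ using bilinearity and \eqref{eq:torsione-energia}; both lead to the identical bound $\chi_\e^2\le m\max_i\mathcal{T}_{\Omega_\e}(\e\Sigma,\varphi_{N+i})$.
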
 
\begin{proof} Let us pick $u\in E(\lambda_N)$ such that $\|u\|_{L^2(\Omega)}=1$. If $\{u_{N+i-1}\}_{i=1}^m\sub E(\lambda_N)$ denotes a $L^2(\Omega)$-orthonormal basis, we write $u=\sum_{i=1}^m c_i u_{N+i-1}$, with $\sum_{i=1}^m c_i^2=1$. Then by linearity, Cauchy-Schwarz inequality, the trivial inequality $(\sum_{i=1}^m a_i)^2\leq m\sum_{i=1}^m a_i^2$ and \eqref{eq:torsione-energia}
\begin{align*}
	\mathcal T_{\Omega_\e}(\e\Sigma,u)&=\left|\sum_{1\le i,j\le m}c_ic_j\int_{\Omega_\e} \nabla U_{u_{N+i-1}}^{\e}\cdot \nabla U_{u_{N+j-1}}^{\e}\dx\right|\\
	&\le \sum_{1\le i,j\le m}|c_i||c_j|\left(\int_{\Omega_\e} |\nabla U_{u_{N+i-1}}^{ \e }|^2\dx\right)^{\frac12}\left(\int_\Omega |\nabla U_{u_{N+j-1}}^{ \e }|^2\dx\right)^{\frac12}\\
	&=\left(\sum_{i=1}^m |c_i|\left(\int_{\Omega_\e} |\nabla U_{u_{N+i-1}}^{ \e }|^2\dx\right)^{\frac12}\right)^2\\
	&\le m\,\left(\max_{1\le i\le m}\int_{\Omega_\e} |\nabla U_{u_{N+i-1}}^{ \e }|^2\dx\right)\sum_{i=1}^m c_i^2=m\max_{1\le i\le m}\mathcal T_{\Omega_\e}(\e\Sigma,u_{N+i-1}).
\end{align*}
By Lemma \ref{l:torsione_infinitesima} $\mathcal T_{\Omega_\e}(\e\Sigma,u_{N+i-1})\to0$ for all $1\le i\le m$: the proof is complete.
\end{proof}

For $\e>0$, we denote by $\Pi_\e$ the linear mapping 
\begin{equation*}
\begin{array}{cccc}
	\Pi_\e:& E(\lambda_N)& \to     & H^1_0(\Omega_\e)\\
			 & u            & \mapsto & u+U^\e_{u},
\end{array}
\end{equation*}
where $E(\lambda_N)$ and $H^1_0(\Omega_\e)$ are considered subspaces of $L^2(\Omega)$ and $L^2(\Omega_\e)$, respectively.

\begin{lemma} \label{l:norm}
	If $M_\e:=\left\|\Pi_\e -\mathbb I\right\|_{{\mathcal L(E(\lambda_N),L^2(\Omega_\e))}}$, there holds
	 \[M_\e=o(\chi_\e),\quad\text{as }\e\to 0.\]
\end{lemma}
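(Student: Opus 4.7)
The plan is to unfold the definition of $M_\e$ and reduce the claim to a uniform $L^2$ control on the thin $u$-torsion functions $U^\e_u$ as $u$ ranges over the unit sphere of $E(\lambda_N)$, which follows by combining \Cref{lemma:L^2_norm} with the finite dimensionality of $E(\lambda_N)$.

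More precisely, by definition of $\Pi_\e$ we have $(\Pi_\e-\mathbb I)u=U^\e_u$ for every $u\in E(\lambda_N)$, so
\[
M_\e=\sup_{\substack{u\in E(\lambda_N)\\ \|u\|_{L^2(\Omega)}=1}}\|U^\e_u\|_{L^2(\Omega_\e)}.
\]
Fix an $L^2(\Omega)$-orthonormal basis $\{u_{N+i-1}\}_{i=1}^m$ of $E(\lambda_N)$. Writing $u=\sum_{i=1}^m c_i u_{N+i-1}$ with $\sum_{i=1}^m c_i^2=1$, the linearity of the map $f\mapsto U^\e_f$ together with the triangle and Cauchy--Schwarz inequalities yields
\[
\|U^\e_u\|_{L^2(\Omega_\e)}\le \sum_{i=1}^m|c_i|\,\|U^\e_{u_{N+i-1}}\|_{L^2(\Omega_\e)}\le \sqrt{m}\,\max_{1\le i\le m}\|U^\e_{u_{N+i-1}}\|_{L^2(\Omega_\e)},
\]
so it is enough to control each of the finitely many terms $\|U^\e_{u_{N+i-1}}\|_{L^2(\Omega_\e)}$ individually. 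This is exactly what \Cref{lemma:L^2_norm} delivers: for each fixed $i$,
\[
\|U^\e_{u_{N+i-1}}\|_{L^2(\Omega_\e)}^2 = o\bigl(\mathcal T_{\Omega_\e}(\e\Sigma,u_{N+i-1})\bigr),\quad\text{as }\e\to 0.
\]
Since $\|u_{N+i-1}\|_{L^2(\Omega)}=1$, the defining \eqref{eq:chiEps} gives $\mathcal T_{\Omega_\e}(\e\Sigma,u_{N+i-1})\le \chi_\e^2$, whence $\|U^\e_{u_{N+i-1}}\|_{L^2(\Omega_\e)}=o(\chi_\e)$ for each $i$. Taking the maximum over the finite index set $i\in\{1,\dots,m\}$ preserves the $o(\chi_\e)$ estimate, and the displayed inequality above then gives $M_\e=o(\chi_\e)$, uniformly in $u$ on the unit sphere of $E(\lambda_N)$.

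There is no real obstacle here: the only issue worth flagging is that \Cref{lemma:L^2_norm} is a pointwise (in $u$) statement, whereas $M_\e$ is by definition a supremum over an infinite unit sphere, but the reduction to a finite basis via the linearity of $f\mapsto U^\e_f$ exactly bridges this gap.
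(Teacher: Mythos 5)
Your proof is correct and follows essentially the same route as the paper's: decompose $u$ in an $L^2$-orthonormal eigenbasis, use linearity of $f\mapsto U^\e_f$ together with Cauchy--Schwarz to reduce to the finitely many basis functions, and then invoke \Cref{lemma:L^2_norm} together with $\mathcal T_{\Omega_\e}(\e\Sigma,u_{N+i-1})\le\chi_\e^2$. The only cosmetic difference is that the paper keeps the ratio $\|U^\e_{u_{N+i-1}}\|_{L^2}/\mathcal T_{\Omega_\e}(\e\Sigma,u_{N+i-1})^{1/2}$ explicit inside the sum and factors out $\chi_\e$ at the end, whereas you first pull out $\max_i\|U^\e_{u_{N+i-1}}\|_{L^2}$ and then apply the bound $\mathcal T\le\chi_\e^2$ termwise; both chains are equivalent.
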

\begin{proof} Let $v\in E(\lambda_N)$ such that $\|v\|_{L^2(\Omega)}=1$ and let us write $v=\sum_{i=1}^m c_i u_{N+i-1}$, for some $\{c_i\}_{i=1}^m$ such that $\sum_{i=1}^m c_i^2=1$, being $\{u_{N+i-1}\}_{i=1}^m$ a basis of $E(\lambda_N)$ orthonormal in $L^2(\Omega)$. By definition, we have $(\Pi_\e-\mathbb I)v=U_v^\e$. Hence, by linearity and Cauchy-Schwarz inequality we find that
 \begin{align*}
 	\|(\Pi_\e-\mathbb{I})v\|_{\mathcal{L}(E(\lambda_N),L^2(\Omega_\e)) } &= \|U_v^\e\|_{L^2(\Omega_\e)} \\
 	&\le \sum_{i=1}^m |c_i|\|U_{u_{N+i-1}}^{ \e }\|_{L^2(\Omega_\e)} \le \left(\sum_{i=1}^m c_i^2\right)^{\frac12}\left(\sum_{i=1}^m \|U_{u_{N+i-1}}^{ \e }\|^2_{L^2(\Omega_\e)}\right)^{\frac12}\\
 	&=\left(\sum_{i=1}^m \mathcal T_{\Omega_\e}(\e\Sigma,u_{N+i-1})\frac{\|U_{u_{N+i-1}}^{ \e }\|^2_{L^2(\Omega_\e)}}{\mathcal T_{\Omega_\e}(\e\Sigma,u_{N+i-1})}\right)^{\frac12}\\ 
 	&\le \sqrt{m}\,\chi_\e\,\max_{1\le i\le m}\frac{\|U_{N+i-1}^{ \e }\|_{L^2(\Omega_\e)}}{\mathcal T_{\Omega_\e}(\e\Sigma,u_{N+i-1})^{1/2}}.
 \end{align*}
According to Lemma \ref{lemma:L^2_norm}, the last term is $o(\chi_\e)$, as $\e\to 0$, and this concludes the proof.
\end{proof}

We observe that in particular Lemma \ref{l:norm} implies that $M_\e<1$, meaning that $\Pi_\e$ is injective, for $\e$ small enough. We will always assume this to be the case in the rest of this section.

\subsection{Application of the abstract lemma}\label{sec:appLemma}

We here recall the abstract result needed in order to find good approximation of perturbed eigenvalues. It is a modification of the so-called \emph{Lemma on small eigenvalues} by Colin de Verdi\'{e}re.
\begin{proposition}[\cite{ALM2022}, Proposition 3.1]\label{p:appEV}
	Let $(\mathcal H, \|\cdot\|)$  be a Hilbert space and $q$ be a quadratic form, semi-bounded from below (not necessarily positive), with
	domain $\mathcal D$ dense in $\mathcal H$ and with discrete spectrum $\{ \nu_i \}_{i\geq1}$. Let $\{ g_i \}_{i\geq1}$ be an orthonormal basis of eigenvectors of $q$. Let $N$ and $m$ be positive integers, $F$ an $m$-dimensional subspace of $\mathcal D$ and $\{ \xi_i^F\}_{i=1}^m$ the eigenvalues of the restriction of $q$ to $F$.
	
	Assume that there exist positive constants $\gamma$ and $\delta$ such that
	\begin{itemize}
		\item[(H1)] $ 0<\delta<\gamma/\sqrt2$;
		\item[(H2)] for all $i\in\{1,\dots,m\}$, $|\nu_{N+i-1}|\le\gamma$, $\nu_{N+m}\ge \gamma$ and, if $N\ge2$, $\nu_{N-1}\le-\gamma$;
		\item[(H3)] $|q(\varphi,g)|\leq \delta\, \|\varphi \|\,\|g\|$ for all $g\in\mathcal D$ and $\varphi \in F$. 
	\end{itemize}
	Then we have
	\begin{itemize}
		\item[(i)] $\left|\nu_{N+i-1}- \xi_i^F \right|\le\frac{ 4}{\gamma}\delta^2$ for all $i=1,\ldots,m$; 
		\item[(ii)] $\left\| \Pi_N - \mathbb{I}\right\|_{\mathcal L(F,\mathcal H)} \leq { \sqrt 2}\delta/\gamma$,  where $\Pi_N$ is the projection onto the subspace of $\mathcal D$ spanned by $\{g_N,\ldots,g_{N+m-1}\}$. 
	\end{itemize}
\end{proposition}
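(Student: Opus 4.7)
The plan is to prove (ii) first and then deduce (i) from it using the min--max characterization of eigenvalues.

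For (ii), I would write any $\varphi\in F$ in the orthonormal basis $\{g_i\}$ as $\varphi=\sum_{i\ge 1}\alpha_i g_i$ and split it into $\Pi_N\varphi=\sum_{i=N}^{N+m-1}\alpha_i g_i$ and $\varphi_2:=\varphi-\Pi_N\varphi=\sum_{i\in I_{\mathrm{bad}}}\alpha_i g_i$, where $I_{\mathrm{bad}}=\{i\colon i<N\text{ or }i\ge N+m\}$. By (H2), every $i\in I_{\mathrm{bad}}$ satisfies $|\nu_i|\ge\gamma$, but the signs may differ (the bad indices split between $\nu_i\le-\gamma$ for $i<N$ and $\nu_i\ge\gamma$ for $i\ge N+m$), so naive absolute values in the sum would not close. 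The trick is a sign twist: take as test vector
\[
g:=\sum_{i\in I_{\mathrm{bad}}}\mathrm{sign}(\nu_i)\,\alpha_i\, g_i\in\mathcal D,
\]
which satisfies $\|g\|=\|\varphi_2\|$ and $q(\varphi,g)=\sum_{i\in I_{\mathrm{bad}}}|\nu_i|\alpha_i^2\ge\gamma\|\varphi_2\|^2$. Plugging into (H3) yields $\gamma\|\varphi_2\|^2\le\delta\|\varphi\|\|\varphi_2\|$, hence $\|\varphi_2\|\le(\delta/\gamma)\|\varphi\|$. Since $(\Pi_N-\mathbb I)\varphi=-\varphi_2$, this proves (ii) (the $\sqrt 2$ in the stated constant is slack).

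To pass to (i), I first observe that (H1) combined with (ii) gives $\|\Pi_N-\mathbb I\|_{\mathcal L(F,\mathcal H)}<1$, so $\Pi_N|_F$ is injective into the $m$-dimensional subspace $G:=\mathrm{span}\{g_N,\dots,g_{N+m-1}\}$, hence a bijection $F\to G$. Then I would establish a Rayleigh-quotient comparison: for $\varphi\in F$ with $\|\varphi\|=1$ and $w:=\Pi_N\varphi$, the $q$-orthogonality $q(\Pi_N\varphi,\varphi_2)=0$ (immediate from the spectral decomposition) gives $q(\varphi,\varphi)=q(w,w)+q(\varphi_2,\varphi_2)$, while $\|w\|^2=1-\|\varphi_2\|^2$. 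The same sign-twist as above yields $|q(\varphi_2,\varphi_2)|\le\sum_{I_{\mathrm{bad}}}|\nu_i|\alpha_i^2\le\delta\|\varphi_2\|\le\delta^2/\gamma$, and applying (H3) with $g=\varphi$ gives $|q(\varphi,\varphi)|\le\delta$. Substituting into
\[
\frac{q(w,w)}{\|w\|^2}-q(\varphi,\varphi)=\frac{q(\varphi,\varphi)\,\|\varphi_2\|^2-q(\varphi_2,\varphi_2)}{1-\|\varphi_2\|^2}
\]
and using $\|\varphi_2\|^2<1/2$ from (H1) to control the denominator, one obtains $\bigl|q(w,w)/\|w\|^2-q(\varphi,\varphi)\bigr|\le 4\delta^2/\gamma$.

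The proof of (i) is then concluded by min--max. One has $\xi_i^F=\min_{V\subset F,\dim V=i}\max_{v\in V,\|v\|=1}q(v,v)$, and since $q$ restricted to $G$ is diagonal with eigenvalues $\nu_N,\dots,\nu_{N+m-1}$, also $\nu_{N+i-1}=\min_{W\subset G,\dim W=i}\max_{w\in W,\|w\|=1}q(w,w)$. The bijection $V\mapsto\Pi_N V$ puts $i$-dimensional subspaces of $F$ in one-to-one correspondence with those of $G$, and the Rayleigh-quotient estimate just proved transfers $\max_V$ to $\max_{\Pi_N V}$ with an error of at most $4\delta^2/\gamma$. Taking the minimum over $V$ on both sides gives the two-sided bound $|\xi_i^F-\nu_{N+i-1}|\le 4\delta^2/\gamma$. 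The main delicate point I foresee is the second step: controlling cross terms in the Rayleigh-quotient comparison cleanly enough to land on the sharp exponent $\delta^2/\gamma$. The essential ingredients are the \emph{spectral} $q$-orthogonality between $\Pi_N\varphi$ and $\varphi_2$ (which removes the cross term $q(\Pi_N\varphi,\varphi_2)$ entirely) and the double use of (H3), once against the sign-twisted bad component to bound $\|\varphi_2\|$ and $|q(\varphi_2,\varphi_2)|$, and once against $\varphi$ itself to bound $|q(\varphi,\varphi)|$.
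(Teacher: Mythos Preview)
The paper does not actually prove this proposition: it merely states it and cites \cite{ALM2022}, Proposition~3.1, as the source, so there is no ``paper's own proof'' to compare against. Your argument is correct and is essentially the standard one going back to Colin de Verdi\`ere and Courtois: the sign-twisted test vector to control the bad component, the $q$-orthogonal splitting $q(\varphi,\varphi)=q(\Pi_N\varphi,\Pi_N\varphi)+q(\varphi_2,\varphi_2)$, and the transfer of Rayleigh quotients through the bijection $\Pi_N|_F\colon F\to G$ combined with min--max are exactly the expected ingredients, and your bookkeeping of the constants lands comfortably within the stated bounds.
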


We are going to apply Proposition \ref{p:appEV} in the following way. For $\e>0$ small enough, we introduce the following set of definitions \eqref{notfirst}--\eqref{notlast}:
\begin{align}
 &\mathcal H_\e := L^2(\Omega_\e)~\text{and }\norm{\cdot}:=\norm{\cdot}_{L^2(\Omega_\e)};\label{notfirst}\\
 &\mathcal D_\e :=H^1_0(\Omega_\e);\\
&q_\e(u):= \int_{\Omega_\e} |\nabla u|^2\dx-\lambda_N\int_{\Omega_\e} u^2\dx,\quad\text{for all }u\in\mathcal D_\e;\\
&F_{\e}:= \Pi_\e(E(\lambda_N)). \label{notlast}
\end{align}

By construction, the eigenvalues of $q_\e$ are $\{\lambda_i^\e-\lambda_N\}_{i\ge1}$. We use the notation $\nu_i^\e:=\lambda_i^\e-\lambda_N$. If $\e$ is small enough, \Cref{l:norm} implies that $\Pi_\e$ is injective, so that $\Pi_\e$ is bijective from $E(\lambda_N)$ onto $F_\e$ and $F_\e$ is proved to be $m$-dimensional. Since $\lambda_i^\e\to\lambda_i$  for all $i\in\N\setminus\{0\}$ and $\lambda_N$ is of multiplicity $m$, the assumption (H2) in Proposition \ref{p:appEV} is fulfilled for $\e>0$ small enough if we take, for instance,
\begin{equation*}
	\gamma:={\frac12}\min\{\lambda_N-\lambda_{N-1},\lambda_{N+m}-\lambda_{N+m-1}\}
\end{equation*}
when $N\ge2$, whereas $\gamma:={ \frac12}
	\left(\lambda_{2}-\lambda_{1}\right)$ when $N=1$ (in which case $m=1$).

It remains to check whether condition (H3) in Proposition \ref{p:appEV} is satisfied. Let us choose $u\in F_\e$ and $w\in \mathcal D_\e$. Since $\Pi_\e$ is injective by \Cref{l:norm}, there exists a unique $v\in E(\lambda_N)$ such that $u=\Pi_\e v$. Hence, we have
\begin{align*} 
q_\e(u,w)&=\int_{\Omega_\e}\nabla (v+U_v^\e)\cdot \nabla w\dx - \lambda_N \int_{\Omega_\e}(v+U_v^\e)w\dx\\
&= \int_{\Omega}\nabla v\cdot \nabla w \dx+ \int_{\Omega_\e}\nabla U_v^\e\cdot \nabla w \dx- \lambda_N \int_{\Omega}vw\dx - \lambda_N \int_{\Omega_\e}U_v^\e w\dx\\
&=\int_{\e\Sigma} \dfrac{\partial v}{\partial \nnu_\Omega}w\dx'
+ \int_{\Omega_\e}\nabla U_v^\e\cdot \nabla w\dx
- \lambda_N \int_{\Omega_\e}U_v^\e w\dx\\
&= -\int_{\e\Sigma} \dfrac{\partial v}{\partial x_d}w\dx' + \int_{\e\Sigma} \dfrac{\partial v}{\partial x_d}w\dx' - \lambda_N \int_{\Omega_\e}U_v^\e w\dx\\
&=- \lambda_N \int_{\Omega_\e}U_v^\e w\dx
\end{align*}
where we have used the facts that $v$ is an eigenfunction relative to $\lambda_N$, the exterior normal vector to $\Omega$ on $\e\Sigma$ is $\nnu_\Omega=(0,\dots,0,-1)$ and $U_v^\e$ satisfies \eqref{eq:weak_eq_tubes}. We then obtain
\begin{equation*}
	|q_\e(u,w)|=|q_\e(\Pi_\e v,w)|\le\lambda_N\|U_v^\e\|\|w\| \le \lambda_N M_\e \|v\|\|w\|\le\lambda_N\frac{M_\e}{1-M_\e}\|u\|\|w\|
\end{equation*}
because $\|U_v^\e\| = \|v+U_v^\e - v\|= \| (\Pi_\e - \mathbb I)v\|$ and $\|v\|=\|v - \Pi_\e v + \Pi_\e v\| \le M_\e \|v\| + \|u\|$, so that 
\[
 \|v\| \le \dfrac{\|u\|}{1-M_\e}.
\]
Lemma \ref{l:norm} then implies
\begin{equation*}
	|q_\e(u,w)|\leq \delta_\e\|u\|\|w\|,
\end{equation*}
for some $\delta_\e>0$ such that $\delta_\e=o(\chi_\e)$ as $\e\to 0$. We can now apply Proposition \ref{p:appEV} with $\delta=\delta_\e$, which implies that for $1\le i\le m$
\begin{equation}\label{eq:asympt_proof}
	\lambda_{N+i-1}^\e=\lambda_N+\xi_i^\e +o(\chi_\e^2),
\end{equation}
where $\{ \xi_i^\e \}_{i=1}^m$ are the eigenvalues of the restriction of $q_\e$ to $F_\e$.

\subsection{Analysis of the restricted quadratic form.}

We now need to study $\{ \xi_i^\e \}_{i=1}^m$. To do so,
we are now going to investigate how the quadratic form $q_\e$ acts when it is restricted to the $m$-dimensional subspace $F_\e= \Pi_\e(E(\lambda_N))$, still endowed with the $L^2(\Omega_\e)$-norm. Let us introduce the following bilinear form
\begin{equation*}				
	r_\e(u,v):= \int_{\Omega_\e}\nabla U_u^\e\cdot \nabla U_v^\e\dx+\lambda_N\int_{\Omega_\e} U_u^\e\,U_v^\e\dx,
\end{equation*}
defined for $u,v\in E(\lambda_N)$.
\begin{lemma} \label{l:restrict}
For all $\varphi_i,\varphi_j\in E(\lambda_N)$, 
\begin{equation*}
	q_\e\left(\Pi_\e \varphi_i,\Pi_\e \varphi_j\right)=-r_\e(\varphi_i,\varphi_j).
\end{equation*}
\end{lemma}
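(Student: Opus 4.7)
The plan is to exploit the calculation that the authors have just completed in Section~\ref{sec:appLemma}, which already gives the useful identity
\begin{equation*}
	q_\e(\Pi_\e v, w) = -\lambda_N \int_{\Omega_\e} U_v^\e\, w \dx \qquad\text{for all }v\in E(\lambda_N),\ w\in H^1_0(\Omega_\e).
\end{equation*}
I will simply apply this with $v=\varphi_i$ and, crucially, the test function $w=\Pi_\e\varphi_j=\varphi_j+U^\e_{\varphi_j}\in H^1_0(\Omega_\e)$ (where $\varphi_j$ is tacitly extended by zero on $T_\e$, which is legitimate since $\varphi_j$ vanishes on $\partial\Omega\supseteq\e\Sigma$). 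This yields
\begin{equation*}
	q_\e(\Pi_\e\varphi_i,\Pi_\e\varphi_j) = -\lambda_N\int_\Omega \varphi_j\, U^\e_{\varphi_i} \dx - \lambda_N\int_{\Omega_\e} U^\e_{\varphi_i}\, U^\e_{\varphi_j} \dx,
\end{equation*}
so the second summand already matches the lower order term of $-r_\e(\varphi_i,\varphi_j)$.

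The one remaining task is to identify $\lambda_N\int_\Omega \varphi_j\, U^\e_{\varphi_i}\dx$ with $\int_{\Omega_\e}\nabla U^\e_{\varphi_i}\cdot\nabla U^\e_{\varphi_j}\dx$. For this I integrate by parts on $\Omega$ using $-\Delta \varphi_j=\lambda_N\varphi_j$: since $U^\e_{\varphi_i}\in H^1_0(\Omega_\e)$ vanishes on $\partial\Omega\setminus\e\Sigma$ (the only part of $\partial\Omega$ that survives as boundary of $\Omega_\e$) and the exterior unit normal to $\Omega$ on $\e\Sigma$ is $\nnu_\Omega=(0,\dots,0,-1)$, the only boundary contribution is $\int_{\e\Sigma}\frac{\partial\varphi_j}{\partial x_d}U^\e_{\varphi_i}\dx'$. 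Thus
\begin{equation*}
	\lambda_N\int_\Omega \varphi_j\, U^\e_{\varphi_i}\dx = \int_\Omega\nabla\varphi_j\cdot\nabla U^\e_{\varphi_i}\dx + \int_{\e\Sigma}\frac{\partial\varphi_j}{\partial x_d}U^\e_{\varphi_i}\dx'.
\end{equation*}
The first term on the right is zero: testing \eqref{eq:tors_function_weak} (with $f=\varphi_i$) against $\varphi_j\in H^1_0(\Omega_\e)$ gives $\int_{\Omega_\e}\nabla U^\e_{\varphi_i}\cdot\nabla\varphi_j\dx=\int_{\e\Sigma}\varphi_j\frac{\partial \varphi_i}{\partial x_d}\dx'=0$, since $\varphi_j\equiv 0$ on $\e\Sigma\subset\partial\Omega$, and of course $\nabla\varphi_j\equiv 0$ outside $\Omega$. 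The surface integral is then rewritten using \eqref{eq:tors_function_weak} with $f=\varphi_j$ and test function $U^\e_{\varphi_i}$, which yields exactly $\int_{\Omega_\e}\nabla U^\e_{\varphi_j}\cdot\nabla U^\e_{\varphi_i}\dx$. Combining the two identities gives $q_\e(\Pi_\e\varphi_i,\Pi_\e\varphi_j)=-r_\e(\varphi_i,\varphi_j)$.

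There is no real obstacle here; the only point requiring care is bookkeeping of the boundary terms, namely using twice that $\varphi_j=0$ on $\e\Sigma$ (as a point in $\partial\Omega$) while $U^\e_{\varphi_i}$ is nonzero on $\e\Sigma$ (which sits in the interior of $\Omega_\e$) and vanishes on the rest of $\partial\Omega$. Everything else is a double application of the weak torsion equation \eqref{eq:tors_function_weak} together with the eigenvalue equation for $\varphi_j$.
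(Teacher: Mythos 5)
Your proof is correct and takes essentially the same route as the paper: both hinge on the weak torsion equation \eqref{eq:tors_function_weak}, the eigenvalue equation for the $\varphi$'s, and an integration by parts on $\Omega$ to isolate the boundary term on $\e\Sigma$. Your only departure is more economical bookkeeping, recycling the identity $q_\e(\Pi_\e v, w) = -\lambda_N\int_{\Omega_\e} U_v^\e\, w\dx$ already derived when checking (H3) (applied with $w=\Pi_\e\varphi_j$), whereas the paper re-expands $q_\e(\Pi_\e\varphi_i,\Pi_\e\varphi_j)$ from scratch and simplifies term by term.
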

\begin{proof} 
For simplicity, in the sequel we write $U^\e_i$ in place of $U^\e_{\varphi_i}$. We have
 \begin{align*}
 	q_\e\left(\Pi_\e \varphi_i,\Pi_\e \varphi_j\right)
&=\int_{\Omega_\e}\nabla (\varphi_i+U_i^\e)\cdot \nabla(\varphi_j+U_j^\e)\dx-\lambda_N \int_{\Omega_\e} (\varphi_i+U_i^\e)\,(\varphi_j+U_j^\e)\dx\\
&=\int_{\Omega_\e}\nabla \varphi_i\cdot \nabla U_j^\e\dx + \int_{\Omega_\e}\nabla \varphi_j\cdot \nabla U_i^\e\dx + \int_{\Omega_\e}\nabla U_i^\e\cdot \nabla U_j^\e\dx\\
&\quad - \lambda_N \int_{\Omega_\e} \varphi_i\,U_j^\e \dx- \lambda_N \int_{\Omega_\e} \varphi_j\,U_i^\e \dx- \lambda_N \int_{\Omega_\e} U_i^\e\,U_j^\e\dx,
 \end{align*}
 where we have used the fact that $\varphi_i,\ \varphi_j$ are both eigenfunctions relative to $\lambda_N$. Note that the integral involving $\varphi_i$ or $\varphi_j$ taken over $\Omega_\e$ are the same if taken over $\Omega$. Integrating by parts we obtain
 \[
  \int_{\Omega}\nabla \varphi_i\cdot \nabla U_j^\e\dx = \lambda_N \int_{\Omega_\e} \varphi_i\,U_j^\e \dx+ \int_{\e\Sigma} \dfrac{\partial\varphi_i}{\partial\nnu_{\Omega}}U_j^\e\dx',
 \]
where $\nnu_\Omega=(0,\ldots,0,-1)$. 
 We can go ahead obtaining
 \begin{align*}
  q_\e\left(\Pi_\e \varphi_i,\Pi_\e \varphi_j\right)
&=- \int_{\e\Sigma}\dfrac{\partial \varphi_i}{\partial x_d}\,U_j^\e\dx' - \int_{\e\Sigma}\dfrac{\partial \varphi_j}{\partial x_d}\,U_i^\e\dx' + \int_{\Omega_\e}\nabla U_i^\e\cdot \nabla U_j^\e\dx' - \lambda_N \int_{\Omega_\e} U_i^\e\,U_j^\e\dx. 
 \end{align*}
Taking into account \eqref{eq:weak_eq_tubes} with $U_j^\e$ and $U_i^\e$ as test functions we obtain  
\[
 \int_{\Omega_\e}\nabla U_i^\e\cdot \nabla U_j^\e\dx =  \int_{\e\Sigma}\dfrac{\partial \varphi_i}{\partial x_d}\,U_j^\e \dx'=  \int_{\e\Sigma}\dfrac{\partial \varphi_j}{\partial x_d}\,U_i^\e\dx',
\]
respectively.
In this way,
\begin{align*}
 q_\e\left(\Pi_\e \varphi_i,\Pi_\e \varphi_j\right)
&= - \int_{\Omega_\e}\nabla U_i^\e\cdot \nabla U_j^\e\dx - \lambda_N \int_{\Omega_\e} U_i^\e\,U_j^\e\dx', 
\end{align*}
and the proof is concluded. 
\end{proof}

\begin{remark}\label{r:eige}
Note that $\{\Pi_\e\varphi_i\}_{i=1,\ldots,m}$ is a basis of $\Pi_\e(E(\lambda_N))$, but $\Pi_\e\varphi_i$ are not orthogonal to each other. This is true only at the limit as $\e\to0$, since $\Pi_\e\varphi_i \to \varphi_i$ as $\e\to0$ for any $i=1,\ldots,m$. Thus, Lemma \ref{lemma:L^2_norm} and Lemma \ref{l:restrict} imply that
 \[
  \xi_j^\e = \mu_j^\e + o({\chi_\e}^2) \qquad \text{as }\e\to0,
 \]
where $\mu_j^\e$ denote the eigenvalues of the form $r_\e(\cdot,\cdot)$ defined on $ E(\lambda_N)$. 
\end{remark}

Therefore, in view of \eqref{eq:asympt_proof} and \Cref{r:eige}, the proof of \Cref{thm:approxEVs} is complete.

\section{Ramification of eigenvalues}\label{sec:ramification}

The aim of the present section is to prove \Cref{thm:orderEVs}. In order to investigate the occurrence of ramification of multiple eigenvalues, we need to study $\mu_j^\e$, i.e. the eigenvalues of the form $r_\e(\cdot,\cdot)$ defined on $ E(\lambda_N)$. As already mentioned in the introduction, we expect that as $\e>0$ multiple eigenvalues split according to the order of vanishing of suitably chosen limit eigenfunctions at the origin. Hence, we first introduce 
the aforementioned \textit{order decomposition} of $E(\lambda_N)$, which drives us towards the choice of the proper limit eigenbasis. Secondly, we iteratively apply the abstract result \Cref{p:appEV}, by choosing smaller and smaller approximating spaces $F$.

\subsection{Order decomposition of the eigenspace}

For clarity of exposition, we report here the statement of Proposition \ref{prop:DecompESintro}, which we are going to prove. 
\begin{proposition}\label{prop:DecompES} There exists a decomposition of $E(\lambda_N)$ into a sum of orthogonal subspaces
\[E(\lambda_N)=E_1\oplus\dots\oplus E_p,\] 
for some integer $p\geq 1$, and an associated finite increasing sequence of integers
\[0<k_1<\dots<k_p\]
such that, for all $1\le j \le p$, a function in $E_j\setminus\{0\}$ has the order of vanishing $k_j$ at $0$, that is
\[
	\frac{\varphi(rx)}{r^{k_j}}\to \psi_{k_j}(x)\quad\text{in }C^{1,\alpha}(\overline{B_1^+}),~\text{as }\e\to 0,
\]
for some harmonic polynomial $\psi_{k_j}$, homogeneous of degree $k_j$ and odd with respect to $x_d$.
 In addition, such a decomposition is unique. 
\end{proposition}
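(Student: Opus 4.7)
My plan is to construct the decomposition as the graded pieces of a natural filtration by vanishing order at the origin. For each nonzero $\varphi\in E(\lambda_N)$, the Bers-type statement \eqref{eq:conv_psi_gamma} assigns a uniquely determined integer $k(\varphi)\ge1$ together with a nonzero limit $\psi_{k(\varphi)}\in\mathbb{P}^{k(\varphi)}_{\textup{odd}}$; set $k(0)=+\infty$. For every integer $k\ge1$ define
\[
W_k:=\{\varphi\in E(\lambda_N)\colon k(\varphi)\ge k\}.
\]
I would then show that the $W_k$ form a descending chain of subspaces of $E(\lambda_N)$ with $W_1=E(\lambda_N)$, that stabilizes to $\{0\}$ from some index onwards, and that the successive $L^2$-orthogonal complements give the desired decomposition.

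The first key step is that $W_k$ is linear. If $\varphi_1,\varphi_2\in W_k$, then
\[
r^{-(k-1)}\varphi_i(rx)=r^{k(\varphi_i)-(k-1)}\bigl[r^{-k(\varphi_i)}\varphi_i(rx)\bigr]\to 0\quad\text{in }C^{1,\alpha}(\overline{B_1^+}),
\]
because the exponent $k(\varphi_i)-k+1\ge 1$ is strictly positive. Hence any linear combination $\varphi$ satisfies the same limit, and, when $\varphi\neq 0$, applying \eqref{eq:conv_psi_gamma} to $\varphi$ itself forces $k(\varphi)\ge k$, whence $\varphi\in W_k$. Since $\dim W_k$ is nonincreasing in $k$, integer-valued and nonnegative, it stabilizes; and since $k(\varphi)<+\infty$ for every nonzero $\varphi\in E(\lambda_N)$, this stable value must be $0$. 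Let $k_1<k_2<\cdots<k_p$ enumerate the (finitely many) integers at which $\dim W_k$ strictly decreases, so $W_{k_1}=E(\lambda_N)$, $W_{k_j+1}=W_{k_j+2}=\cdots=W_{k_{j+1}}$ and $W_{k_p+1}=\{0\}$.

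Next I would set $E_j$ to be the $L^2(\Omega)$-orthogonal complement of $W_{k_j+1}$ in $W_{k_j}$, so that $W_{k_j}=E_j\oplus W_{k_j+1}$ orthogonally. Iterating,
\[
E(\lambda_N)=E_1\oplus W_{k_2}=E_1\oplus E_2\oplus W_{k_3}=\cdots=E_1\oplus\cdots\oplus E_p,
\]
and pairwise orthogonality follows from $E_j\subseteq W_{k_j}\subseteq W_{k_i+1}$ for $i<j$, combined with $E_i\perp W_{k_i+1}$. The order property is then immediate: any nonzero $\varphi\in E_j$ has $k(\varphi)\ge k_j$ because $\varphi\in W_{k_j}$, while $k(\varphi)\ge k_j+1$ would place $\varphi$ in $W_{k_j+1}\cap E_j=\{0\}$; hence $k(\varphi)=k_j$ exactly.

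For uniqueness, suppose $E(\lambda_N)=E_1'\oplus\cdots\oplus E_{p'}'$ is any orthogonal decomposition with elements of $E_j'\setminus\{0\}$ of vanishing order $k_j'$ along an increasing sequence. Decomposing $\varphi=\sum_i\varphi_i'$ with $\varphi_i'\in E_i'$, the distinctness of the $k_i'$ ensures that the leading-order blow-up limit is contributed by the single summand with smallest index $i^\ast$ for which $\varphi_{i^\ast}'\neq 0$; hence $k(\varphi)=k_{i^\ast}'=\min\{k_i'\colon\varphi_i'\neq 0\}$. This identifies $\{k_j'\}=\{k_j\}$ (so $p'=p$) and $W_k=\bigoplus_{k_j'\ge k}E_j'$; then $E_j'\subseteq W_{k_j}$ and $E_j'\perp W_{k_j+1}=\bigoplus_{i>j}E_i'$, forcing $E_j'\subseteq E_j$, and a dimension count gives equality. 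The main subtle point throughout is the linearity of $W_k$, which genuinely relies on having \eqref{eq:conv_psi_gamma} available with uniform convergence for every element of $E(\lambda_N)$, and hence for arbitrary linear combinations; all remaining steps amount to bookkeeping once the filtration has been set up correctly.
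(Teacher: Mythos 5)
Your proof is correct and self-contained, and it supplies in detail what the paper delegates to \cite[Appendix A]{ALM2022}. The genuine difference is the technical foundation for linearity of the filtration by vanishing order. The paper first extends each eigenfunction oddly across $\{x_d=0\}$ so that it becomes an interior eigenfunction of $-\Delta$ on a ball, hence real-analytic at $0$; linearity of the subspaces then comes for free because they are kernels of the (manifestly linear) truncated-Taylor-expansion maps $\Pi_k : E(\lambda_N)\to\R_k^{\mathrm{odd}}[X_1,\dots,X_d]$. You instead take the Bers-type asymptotics \eqref{eq:conv_psi_gamma} as a black box and extract linearity of $W_k$ from the fact that multiplying by a strictly positive power of $r$ sends the rescaled family to zero in $C^{1,\alpha}(\overline{B_1^+})$ while a nonzero blow-up limit would be incompatible with that; this is a bit more work but avoids invoking analyticity or odd reflection. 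Both routes hinge on the same essential fact, namely that the leading profile $\psi_{k(\varphi)}$ is nonzero for $\varphi\neq 0$ (equivalently, that the vanishing order is well defined and finite), and both finish with the same bookkeeping of successive orthogonal complements $E_j := W_{k_j}\cap W_{k_j+1}^{\perp}$. Your uniqueness argument, based on reading off $k(\varphi)=\min\{k_i'\colon\varphi_i'\neq 0\}$ and identifying $W_k$ with $\bigoplus_{k_j'\ge k}E_j'$, is also complete and is exactly the graded-piece characterization the filtration construction is designed to produce.
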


\begin{proof} 
Given any $u\in E(\lambda_N)$, let us consider its restriction to $B_{R}^+=\{x\in B_{R}\colon x_d>0\}$, for $R<r_0$ sufficiently small so that $B_R^+\sub\Omega$ (with $r_0$ as in \eqref{eq:ass_flat}), let us extend it to $B_{R}$ oddly with respect to $x_d$, and let us call it $\tilde{u}$. Then
\[
	-\Delta \tilde{u}=\lambda_N \tilde{u},\quad\text{in }B_{R}.
\]
 Hence, in view of classical regularity results, $\tilde{u}$ is analytic at $0$ and any truncation of its Taylor expansion at $0$ is odd with respect to $x_d$. 
For any $k\in \N$, let us define the mapping $\Pi_k:E(\lambda_N)\to \R_k^{\mathrm{odd}}[X_1,\dots,X_d]$ that associates to a function its (upper) Taylor expansion at $ 0$, truncated to order $k$. Here $\R_k^{\mathrm{odd}}[X_1,\dots,X_d]$ is the the space of polynomials odd with respect to $x_d$ of degree at most $k$.
The proof can then proceed as in \cite[Appendix A]{ALM2022}. 

 \end{proof}

\begin{remark}\label{rem:MaxDim} Let $E(\lambda_N)=E_1\oplus\dots\oplus E_p$, be the order decomposition of Proposition \ref{prop:DecompES}. Then the dimension of $E_j$ is at most the dimension of the space of spherical harmonics in $d$ variables of degree $k_j$ (see, {\it e.g.}, \cite[pp. 159--165]{Berger1971}) vanishing on $\{x_d=0\}$. Explicitly,
\[\mbox{dim}(E_{j})\le \binom{k_j + {
 d} - 2}{k_j} + \binom{k_j + {d} - 3}{k_j-1} - 1.
\] 
As a consequence, 
in the case $d=2$ $\mbox{dim}(E_{j})\le 1$ for all $1\le j\le p$.
\end{remark}

\subsection{\texorpdfstring{Eigenvalues $\mu_j^\e$}{Approximating eigenvalues}}

Let us denote by
\[E(\lambda_N)=E_1\oplus\dots\oplus E_p\] 
the order decomposition of the eigenspace $E(\lambda_N)$ (see Proposition \ref{prop:DecompES}),  with
\[0<k_1<\dots<k_p\]
the associated finite sequence of vanishing orders. 
We introduce the bilinear forms
\begin{equation*}
 \mathfrak T_\ell(u,v)=\int_{\Pi} \nabla U_{\Sigma,u}^\Pi\cdot \nabla U_{\Sigma,v}^\Pi \ \qquad \text{for }u,v\in E_\ell\subseteq E(\lambda_N)
\end{equation*}
where $U_{\Sigma,u}^\Pi$ and $U_{\Sigma,v}^\Pi$ achieve 
\[
	\mathcal{T}_\Pi(\Sigma,\mathfrak{B}_\ell u)=-2\inf_{w\in \mathcal D^{1,2}(\Pi)} \left\{ \frac12 \int_\Pi |\nabla w|^2\dx -\int_{\Sigma} w\dfrac{\partial \mathfrak{B}_\ell u}{\partial x_d}\dx' \right\}
\]
and
\[
\mathcal{T}_\Pi(\Sigma,\mathfrak{B}_\ell v)=-2\inf_{w\in \mathcal D^{1,2}(\Pi)} \left\{ \frac12 \int_\Pi |\nabla w|^2\dx -\int_{\Sigma} w\dfrac{\partial \mathfrak{B}_\ell v}{\partial x_d}\dx' \right\}
\]
respectively. 
For simplicity of exposition, we assume that the orthonormal basis $\{u_{N+i-1}\}_{i=1}^m$ agrees with the order decomposition and diagonalizes each of the quadratic forms $\mathfrak T_\ell$. Explicitly, this means that, for all $\ell\in\{1,\dots,p\}$, 
\begin{equation*}
	E_\ell=\mbox{span}\{u_{N+m_1+\dots+m_{\ell-1}},\dots,u_{N+m_1+\dots+m_{\ell-1}+m_\ell-1}\}
\end{equation*}
and, for all $1\le s<t\le  m_\ell$,
\begin{equation*}
\mathfrak T_\ell(u_{N+m_1+\dots+m_{\ell-1}+s-1},u_{N+m_1+\dots+m_{\ell-1}+t-1})=0.
\end{equation*}
It follows that, for all $1\le s\le  m_\ell$,
\begin{equation*}
\mathfrak T_\ell(u_{N+m_1+\dots+m_{\ell-1}+s-1},u_{N+m_1+\dots+m_{\ell-1}+s-1})=\mu_{\ell,s}.
\end{equation*}

According to Remark \ref{r:eige}, we start from the lowest rate of convergence $k_1$ and we look for the $m_1$ largest eigenvalues (as $\e\to0$) of the matrix of the quadratic form $r_\e$ in the basis $\{u_{N+i-1}\}_{i=1}^m$, namely $A_\e$. 
It follows from Lemma \ref{lemma:L^2_norm} and Theorem \ref{thm:blow_up1} that 
\begin{equation*}
A_\e=
\left(
\begin{array}{cccccc}
	& &&						&       &						   \\
	&\mathbf 0&&      					&\mathbf 0      &						   \\
	& &&						& 		&						   \\
	& &&\mu_{1,1}\,\e^{d-2+2k_1}&	    &			\mathbf 0		   \\
	&\mathbf 0&&					&\ddots &					   \\
	& &&		\mathbf 0			& 		&\mu_{1,m_1}\,\e^{d-2+2k_1}
\end{array}
\right)
+o\left(\e^{d-2+2k_1}\right).
\end{equation*}  
Using the min-max characterization of eigenvalues, we conclude that, for $1\le i\le m_1$,
\begin{equation*}
	\mu^\e_{i}=\mu_{1,i}\,\e^{d-2+2k_1}+o\left(\e^{d-2+2k_1}\right)
\end{equation*}
and, for $m_1+1\le i\le m$,
\begin{equation*}
	\mu^\e_{i}=o\left(\e^{d-2+2k_1}\right).
\end{equation*}
Proposition \ref{p:appEV}, Remark \ref{r:eige} and the fact that $\chi_\e^2$ and $\e^{d-2+2k_1}$ are of the same order, tell us that the same estimates hold for the difference $\nu^\e_{N-1+i}:=\lambda_{N-1+i}^\e-\lambda_N$:  for $1\le i\le m_1$
\begin{equation*}
	\nu^\e_{N-1+i}=-
	\mu_{1,i}\e^{d-2+2k_1}+o\left(\e^{d-2+2k_1}\right)
\end{equation*}
and, for $m_1+1\le i\le m$,
\begin{equation*}
	\nu^\e_{N-1+i}=o\left(\e^{d-2+2k_1}\right).
\end{equation*}

The rest of the proof consists of a step-by-step procedure, in which we rescale the quadratic form $q_\e$ and apply the same arguments in order to identify successive groups of eigenvalues converging to $\lambda_N$ with the same rate. Let us sketch the next step. We set, for $u,v\in\mathcal D_\e$,
\begin{equation*}
	q_{2}^\e(u,v)\equiv\frac1{\e^{d-2+2k_1}}q_\e(u,v),
\end{equation*}
and we define the subspace
\begin{equation*}
	F_{2}^\e=\Pi_\e(E_2\oplus\dots\oplus E_{p}).
\end{equation*}

The eigenvalues of $q^\e_{2}$ are $\left\{\frac1{\e^{d-2+2k_1}}\nu_i^\e\right\}_{i\ge1}=\frac1{\e^{d-2+2k_1}}\left\{\lambda_i^\e - \lambda_N\right\}_{i\ge1}$. 
We know from the first step that, for $1\le i\le m_1$,
\begin{equation*}
	\lim_{\e\to0}\frac1{\e^{d-2+2k_1}}\nu^\e_{N-1+i}=-\mu_{1,i}<0.
\end{equation*}
It follows immediately that there exists $\gamma>0$ such that, for $\e>0$ small enough,
\begin{equation*}
  \left|\frac1{\e^{d-2+2k_1}}\nu_{N-1+i}^\e\right|\le\gamma \mbox{ for } m_1+1\le i\le m
  \quad \text{and}\quad
	\frac1{\e^{d-2+2k_1}}\nu_{N+m}^\e\ge2\gamma;
\end{equation*}
whereas in case $N\ge2$ even
\begin{equation*}
	\frac1{\e^{d-2+2k_1}}\nu_{N-1}^{\e}\le -2\gamma.
\end{equation*}
Repeating the arguments of Section \ref{sec:appLemma}, we can show that for all $v\in F_{2}^\e$ and $w\in \mathcal D_\e$,
\begin{equation*}
	\left|q^\e_{2}(v,w)\right|\le o\left(\left(\frac{\e^{d-2+2k_{2}}}{\e^{d-2+2k_1}}\right)^{1/2}\right) \|v\|\|w\|
	=o\left( \e^{k_{2} - k_{1}} \right) \|v\|\|w\|.
\end{equation*}
Using the arguments in the proof of Theorem \ref{thm:approxEVs} and in the first step, we conclude that, for $1+m_1\le i\le m_1+m_2$,
\begin{equation*}
	\frac1{\e^{d-2+2k_1}}\nu^\e_{N-1+i}=-\mu_{2,i-m_1}\,\e^{2k_2 - 2k_1}+o\left(\e^{2k_2 - 2k_1}\right)
\end{equation*}
and, for $m_1+m_2+1\le i\le m$,
\begin{equation*}
	\frac1{\e^{d-2+2k_1}}\nu^\e_{N-1+i}=o\left(\frac{\e^{d-2+2k_2}}{\e^{d-2+2k_1}}\right).
\end{equation*}
This gives us finally, for $m_1+1\le i\le m_1+ m_2$,
\begin{equation*}
	\nu^\e_{N-1+i}=-\mu_{2,i-m_1}\,\e^{d-2+2k_2}+o\left(\e^{d-2+2k_2}\right)
\end{equation*}
and, for $m_1+m_2+1\le i\le m$,
\begin{equation*}
	\nu^\e_{N-1+i}=o\left(\e^{d-2+2k_2}\right).
\end{equation*}
Carrying on the procedure for $\ell$ from $3$ to $m$, we reach the conclusion.

\begin{remark}
By Remark \ref{rem:MaxDim}, in dimension $d=2$ the eigenfunctions associated to a multiple eigenvalue have necessarily different vanishing order at $0$. We then recover the result proved in \cite[Section 11]{Gadylshin2005tubi}.
\end{remark}

% 
% \appendix
% 
% \section{Remarks on mixed Dirichlet--Neumann boundary conditions}\label{appA}

\section*{Acknowledgments}
L. Abatangelo is supported by the Italian Ministery MUR grant Dipartimento di Eccellenza 2023-2027.
 R. Ognibene is partially supported by the project ERC VAREG - \emph{Variational approach to the regularity of the free boundaries} (grant agreement No. 853404) and by the INdAM-GNAMPA 2022 project \emph{Questioni di esistenza e unicità per problemi nonlocali con potenziali di tipo Hardy}. Part of this work was developed while R. Ognibene was in residence at Institut Mittag-Leffler in Djursholm, Stockholm (Sweden) during the semester \emph{Geometric Aspects of Nonlinear Partial Differential Equations} in 2022, supported by the Swedish Research Council under grant no. 2016-06596.
 
% \subsection*{Data availability statement}
% Data sharing not applicable to this article as no datasets were generated or analysed during the current study.

\bibliography{biblio}
\bibliographystyle{acm} 

\end{document}